\providecommand{\U}[1]{\protect\rule{.1in}{.1in}}
\newtheorem{theorem}{Theorem}[section]
\newtheorem{corollary}[theorem]{Corollary}
\newtheorem{example}[theorem]{Example}
\newtheorem{lemma}[theorem]{Lemma}
\newtheorem{proposition}[theorem]{Proposition}
\newtheorem{remark}{Remark}[section]
\newenvironment{proof}[1][Proof]{\noindent\textbf{#1.} }{\ \rule{0.5em}{0.5em}}
\definecolor{color0}{gray}{.50}
\definecolor{color1}{rgb}{0,.2,.8}
\definecolor{color2}{rgb}{1,.2,0}
\definecolor{color3}{rgb}{.8,.5,1}
\begin{document}

\title{Minimal element theorems revisited}
\author{Andreas H. Hamel\thanks{Free University of Bozen-Bolzano, Faculty of Economics and Management, University Square 1, 39031 Bruneck-Brunico, Italy}, Constantin Z\u{a}linescu\thanks{University "Al. I. Cuza" Iasi, Faculty of Mathematics, Bd. Carol I, 11, 700506 Iasi, Romania}}
\maketitle

\begin{abstract}
Starting with the Brezis-Browder principle, we give stronger versions of many
variational principles and minimal element theorems which appeared in the
recent literature. Relationships among the elements of different sets of
assumptions are discussed and clarified, i.e., assumptions to the metric
structure of the underlying space and boundedness assumptions. New results
involving set-valued maps and the increasingly popular set relations are
obtained along the way.
\end{abstract}

\noindent\textbf{Keywords.} variational principle, minimal element theorem,
set-valued map, set relation, locally convex space

\medskip\noindent\textbf{MSC2010.} Primary 46N10 Secondary 58E30, 49J53


{\color{color1} }


\section{Introduction}

The celebrated variational principle due to Ekeland ensures the existence of
minimal elements with respect to (wrt for short) an order relation on a
complete metric space $(X, d)$ generated by a lower semicontinuous function
$f:X \to\mathbb{R}\cup\{+\infty\}$ which is bounded from below; this order is
defined by
\begin{equation}
\label{EqBasicEVPOrder}x_{1} \preceq x_{2} \quad:\Longleftrightarrow\quad
f(x_{1}) + d(x_{1}, x_{2}) \leq f(x_{2}).
\end{equation}
The assumptions required from $f$ lead to certain features of $\preceq$
whereas the assumptions to $X$ admit a countable induction argument which
produces a Cauchy sequence which is decreasing wrt $\preceq$ and whose limit
then is the desired minimal element.

Quite some effort went into attempts to generalize this result.

In the first line of research, the assumptions to $X$, a (complete) metric
space, are basically kept, but the order relation \eqref{EqBasicEVPOrder} is
replaced by a more general one. A blueprint result of this type is
\cite[Theorem 3.2]{DanHegMed83}, others include \cite[Theorem 1]{Tur:81},
\cite[Theorem 16]{Ham:05} (for this, see also \cite[Theorem 2.2]{HamTam08})
and \cite[Lemma 2.2]{LiuNg11}. Such results can also be applied to order
relations of the type \eqref{EqBasicEVPOrder} with $f$ a vector- or even
set-valued function which makes them very powerful tools. The thesis
\cite{Ham:05} exemplifies this approach with corollaries for functions $f$
mapping in preordered monoids (of sets), for example. Below, Corollaries
\ref{c-hamel}, \ref{zv-tnew}, \ref{zv-c-liu-ng}, \ref{zv-t-turinici} are of
this type. One could also look at these results as attempts to separate the
countable induction argument already used in the proof of \cite[Theorem
1]{Eke:79} from the features of the order relation; applications consist in a
mere check if the order relation in question satisfies the required
assumptions which links it to the metric structure. Therefore, the mentioned
results can be proven without involving the Brezis--Browder principle as shown
in the alternative proof of Theorem \ref{t-hz} below.

The second line of research is concerned with order relations on arbitrary
sets (without a metric structure, for example). Of course, alternative
requirements have to be added. A prominent result of this type is the
Brezis--Browder principle \cite{BreBro:76} (BB--principle) where the existence
of a real-valued function is assumed which is bounded from below and
increasing wrt the order relation. Further examples for such results can be
found in \cite{Qiu:14, Qiu:16b} and also in \cite{Tur:14}. Of course, BB--type
theorems can also be used to obtain the results on metric spaces discussed in
the previous paragraph with a suitable monotone function; the proof of the
BB--principle also involves a countable induction argument (see the first
proof of Theorem \ref{ex-BB} below).

A third goal is to lift the results from order relations on a set $X$ to such
relations on a product set $X \times Z$. Results in this direction have been
obtained first by G\"{o}pfert and Tammer (see \cite[Section 3.10]%
{GopRiaTamZal:03} and the references 140--144 therein) and are usually called
minimal element theorems.

In this note, all three of the above lines are followed providing very general
results and discussing the relationships between different sets of assumptions
in detail. Consequently, most relevant results in the literature are obtained
as special cases. In particular, it is shown that variational principles and
minimal element theorems involving (very general) set relations can be
obtained (see, for example, Corollary \ref{p-z2} below). Such results can be
found in \cite{Ha05} for complete metric spaces (manuscript version
\cite{Ha:02} from 2002) and more general ones in \cite{HamLoh:06} (preprint
version \cite{HamLoh:02} from 2002), the latter reference already giving a
proof based on the BB-principle and nonlinear scalarization.


\section{Preliminary Notions and Results\label{sec-vz-10.1}}

Let $\mathbb{N}$ denote the set of natural numbers including $0$ and
$\mathbb{R}$ the real numbers. The set $\mathbb{R}_{+}$ is the set of all
nonnegative real numbers.

In the sequel $(X,d)$ is a metric space, $Y$ is a real separated topological
vector space, $Y^{\ast}$ is its topological dual, and $K\subseteq Y$ is a
proper convex cone; as usual, $K^{+}$ is the positive dual cone of $K$ and
$K^{\#}$ is the quasi-interior of $K^{+}$:%
\begin{align*}
K^{+}  &  =\{y^{\ast}\in Y^{\ast}\mid y^{\ast}(y)\geq0\ \forall\,y\in K\},\\
K^{\#}  &  =\{y^{\ast}\in Y^{\ast}\mid y^{\ast}(y)>0\ \forall\,y\in
K\backslash\{0\}\},
\end{align*}
where for $\alpha,\alpha^{\prime}\in\mathbb{R}$, $\alpha\leq\alpha^{\prime}$
(or, equivalently, $\alpha^{\prime}\geq\alpha$) means, as usual, that
$\alpha^{\prime}-\alpha\in\mathbb{R}_{+}$, while $\alpha<\alpha^{\prime}$ (or
$\alpha^{\prime}>\alpha$) means that $\alpha^{\prime}-\alpha\in\mathbb{R}%
_{+}\backslash\{0\}$.

If $Y$ is just a real linear space we (can) endow it with the finest locally
convex topology, that is, the core convex topology (see \cite[Exercise
2.10]{Hol:75}).

The preorder $\leq_{K}$ on $Y$ generated by $K$ via
\[
y_{1}\leq_{K}y_{2}\quad:\Longleftrightarrow\quad y_{2}\in y_{1}+K
\]
for $y_{1},y_{2}\in Y$ can be extended to the power set $2^{Y}$ by
\[
A_{1}\leq_{K}^{l}A_{2}\quad:\Longleftrightarrow\quad A_{2}\subseteq A_{1}+K,
\]
and it is easily seen that $\leq_{K}^{l}$ is reflexive, transitive, thus a
preorder, but not a partial order in general even if $\leq_{K}$ is
antisymmetric. See \cite{HamEtAl:15} for more details and references. Here and
in the following, the addition for sets is understood in the Minkowski
(element-wise) sense with the convention $\emptyset+A=A+\emptyset=\emptyset$
for all $A\in2^{Y}$. It can also be checked that $\leq_{K}^{l}$ is compatible
with this addition as well as with multiplication by non-negative numbers.

As in \cite{TamZal:11} and \cite{KhaTamZal:15b}, let $F:X\times
X\rightrightarrows K$ satisfy the conditions:

\begin{description}
\item[\textbf{(F1)}] \quad$0\in F(x,x)$ for all $x\in X$,

\item[\textbf{(F2)}] \quad$F(x_{1},x_{2})+F(x_{2},x_{3})\subseteq
F(x_{1},x_{3})+K$ for all $x_{1},x_{2},x_{3}\in X$.
\end{description}

Of course, (F2) is the triangle inequality for $F$ wrt $\leq_{K}^{l}$:
$F(x_{1},x_{3})\leq_{K}^{l}F(x_{1},x_{2})+F(x_{2},x_{3})$; moreover, when $K$
is pointed, (F1) is $F(x,x)\leq_{K}^{l}\{0\}$ since $F$ maps into $K$.

\begin{remark}
{\rm Obviously,
\begin{equation}
\label{EqLatticePreorder}A_{1} \leq^{l}_{K} A_{2} \quad\Longleftrightarrow
\quad A_{2} + K \subseteq A_{1} + K
\end{equation}
since $0 \in K$. The collection of sets $A \subseteq Y$ satisfying $A = A + K$
is denoted by $\mathcal{P}(Y, K)$, and $\leq^{l}_{K}$ coincides with
$\supseteq$ on $\mathcal{P}(Y, K)$. }

{\rm Therefore, in many cases (but not all), one can replace $F$ by
the
function $F_{K}\colon X\times X\rightrightarrows K$ defined by $F_{K}%
(x_{1},x_{2})=F(x_{1},x_{2})+K$. Indeed, if $F$ satisfies (F1) and (F2), then
$F_{K}$ does as well. In this case, $F_{K}(x,x)=K$ for all $x\in X$, and
$F_{K}$ is an order premetric in the sense of \cite[Def.~30]{Ham:05} mapping
into the preordered monoid $(\mathcal{P}(Y,K),+,\leq_{K}^{l})$ with the
Minkowski addition.}
\end{remark}

For $F$ satisfying conditions (F1) and (F2), and $z^{\ast}\in K^{+}$,
consider
\[
\eta_{F,z^{\ast}}:X\times X\rightarrow\overline{\mathbb{R}}_{+},\quad
\eta_{F,z^{\ast}}(x,x^{\prime}):=\inf\{z^{\ast}(z)\mid z\in F(x,x^{\prime
})\}.
\]
It follows immediately that
\[
\eta_{F,z^{\ast}}(x,x)=0\text{ ~and~~}\eta_{F,z^{\ast}}(x,x^{\prime\prime
})\leq\eta_{F,z^{\ast}}(x,x^{\prime})+\eta_{F,z^{\ast}}(x^{\prime}%
,x^{\prime\prime})\quad\forall x,x^{\prime},x^{\prime\prime}\in X.
\]
Using $F$ we introduce a preorder on $X\times2^{Y}$, denoted by $\preceq_{F}$,
in the following manner:
\begin{equation}
(x_{1},A_{1})\preceq_{F}(x_{2},A_{2}):\iff A_{1}+F(x_{1},x_{2}) \leq^{l}_{K}
A_{2} \iff A_{2}\subset A_{1}+F(x_{1} ,x_{2})+K; \label{zv-eq1-b}%
\end{equation}
clearly, $A_{2}=\emptyset\Rightarrow(x_{1},A_{1})\preceq_{F}(x_{2},A_{2})$,
and $[(x_{1},A_{1})\preceq_{F}(x_{2},A_{2})$, $A_{2}\neq\emptyset]$
$\Rightarrow$ $[A_{1}\neq\emptyset$, $F(x_{1},x_{2})\neq\emptyset]$. Indeed,
$\preceq_{F}$ is reflexive by (F1). Assume that $(x_{1},A_{1} )\preceq
_{F}(x_{2},A_{2})$ and $(x_{2},A_{2})\preceq_{F}(x_{3},A_{3})$. This means
$A_{1}+F(x_{1},x_{2}) \leq^{l}_{K} A_{2}$ and $A_{2}+F(x_{2},x_{3}) \leq
^{l}_{K} A_{3}$. Adding these two inequalities, applying (F2) and the
transitivity of $\leq^{l}_{K}$ yields $A_{1}+F(x_{1},x_{3}) \leq^{l}_{K}
A_{3}$ which means that $\preceq_{F}$ is transitive.

Of course,
\begin{equation}
(x_{1},A_{1})\preceq_{F}(x_{2},A_{2})\Longrightarrow A_{2}\subset A_{1}%
+K\iff:A_{1}\leq^{l}_{K} A_{2}; \label{zv-eq1-a}%
\end{equation}
moreover, by (F1), we have that
\begin{equation}
(x,A_{1})\preceq_{F}(x,A_{2})\iff A_{2}\subset A_{1}+K\iff A_{1}\leq^{l}_{K}
A_{2}. \label{zv-eq1-c}%
\end{equation}

Restricting $\preceq_{F}$ to $X\times\left\{  A\mid A\subset Y,\text{
}\operatorname*{card}A=1\right\}  $ we get the preorder $\preceq_{F}^{1}$ on
$X\times Y$ defined by
\begin{equation}
(x_{1},y_{1})\preceq_{F}^{1}(x_{2},y_{2})\quad:\iff\quad y_{2}\in
y_{1}+F(x_{1},x_{2})+K. \label{zv-eq1-b0}%
\end{equation}
Hence, for $x$, $x_{1}$, $x_{2}\in X$ and $y_{1}$, $y_{2}\in Y$, we have that
\begin{align}
(x_{1},y_{1})  &  \preceq_{F}^{1}(x_{2},y_{2})\Longrightarrow y_{1}\leq
_{K}y_{2},\label{zv-eq1-a0}\\
(x,y_{1})  &  \preceq_{F}^{1}(x,y_{2})\Longleftrightarrow y_{1}\leq_{K}y_{2}.
\label{zv-eq1-c0}%
\end{align}

Notice that $\preceq_{F}^{1}$ is nothing else than $\preceq_{F}$ from
\cite{TamZal:11}.

Besides (F1) and (F2) we shall consider also the condition

\begin{description}
\item[(F3)] \quad there exists $z_{F}^{\ast}\in K^{+}$ such that
\begin{equation}
\eta(\delta):=\inf z_{F}^{\ast}(F_{\delta}):=\inf\left\{  z_{F}^{\ast}(v)\mid
v\in F_{\delta}\right\}  >0\quad\forall\delta>0, \label{zv-f2}%
\end{equation}

\end{description}

\noindent where
\begin{equation}
F_{\delta}:=\cup\{F(x,x^{\prime})\mid x,x^{\prime}\in X,\ d(x,x^{\prime}%
)\geq\delta\} \label{zv-fdelta}%
\end{equation}
for $\delta\geq0$; it follows that $0\leq\eta(\delta^{\prime})\leq\eta
(\delta)$ for $0\leq\delta^{\prime}<\delta$ because $F_{\delta}\subseteq
F_{\delta^{\prime}}$ in such a case.

Clearly, condition (F3) can be rewritten as
\begin{equation}
\exists z_{F}^{\ast}\in K^{+},\ \forall\delta>0\ :\ \inf z_{F}^{\ast
}(F_{\delta})>0. \label{zv-f2b}%
\end{equation}
A weaker condition is
\begin{equation}
\forall\delta>0,\ \exists z^{\ast}\in K^{+}\ :\ \inf z^{\ast}(F_{\delta})>0.
\label{zv-f4}%
\end{equation}
An even weaker condition is the following%
\begin{equation}
\forall\delta>0,\ \forall(z_{n})\subseteq F_{\delta},\ \exists z^{\ast}\in
K^{+}\ :\ \limsup z^{\ast}(z_{n})>0; \label{zv-f5}%
\end{equation}
when (\ref{zv-f5}) holds Qiu \cite[Def.\ 3.5]{Qiu:13} says that $F$ \emph{is
compatible with} $d$. Clearly, if (F3) holds then $0\notin{\overline
{\operatorname*{conv}} }F(x,x^{\prime})$ for $x\neq x^{\prime}$. Condition
(F3) holds obviously if for some $z^{\ast}\in K^{+}$one has%
\begin{equation}
\forall x,x^{\prime}\in X:\inf_{z\in F(x,x^{\prime})}z^{\ast}(z)\geq
d(x,x^{\prime}). \label{zv-f3}%
\end{equation}
The set below is related to the above conditions:
\begin{equation}
K_{F}^{+}:=\{z^{\ast}\in K^{+}\mid z^{\ast}(z)>0~\forall z\in\cup_{\delta
>0}F_{\delta}\}. \label{zv-f2c}%
\end{equation}

In the sequel we consider a nonempty $K$--convex subset $H$ of $K$, that is
$H+K$ is convex. It is worth observing that
\begin{equation}
0\le\alpha\le\beta\Rightarrow\beta H\subset\alpha H+K,\qquad0\le\gamma
,\delta\Rightarrow\gamma H+\delta H \subset(\gamma+\delta) H+K. \label{r-hk}%
\end{equation}

The next result provides an important example of set-valued functions $F$
satisfying conditions (F1) and (F2) (see \cite[Lem.\ 10.1.1]{KhaTamZal:15b}).

\begin{lemma}
\label{zv-lem1}Let $\emptyset\neq H\subseteq K$ be a $K$--convex set.
Consider
\begin{equation}
F_{H}:X\times X\rightrightarrows K,\quad F_{H}(x,x^{\prime}):=d(x,x^{\prime
})H. \label{zv-eqfh}%
\end{equation}
Then

\emph{(i)} $F_{H}$ verifies (F1) and (F2).

\emph{(ii)} $F_{H}$ verifies condition (F3) iff $F_{H}$ verifies condition
(\ref{zv-f4}) iff there exists $z_{H}^{\ast}\in K^{+}$ such that $\inf
z_{H}^{\ast}(H)>0$; if $Y$ is a separated locally convex space, then $F_{H}$
verifies condition (F3) iff $0\notin\operatorname*{cl} (H+K)$. Moreover,
$F_{H}$ verifies condition (\ref{zv-f5}) iff
\begin{equation}
\forall(h_{n})\subseteq H,\ \exists z^{\ast}\in K^{+}\ :\ \limsup z^{\ast
}(h_{n})>0. \label{zv-f5b}%
\end{equation}

\end{lemma}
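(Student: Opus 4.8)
The plan is to treat the three parts of Lemma \ref{zv-lem1} in order, using the $K$-convexity of $H$ encoded in \eqref{r-hk} and the defining formula $F_H(x,x')=d(x,x')H$.

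For part (i), I would verify (F1) directly: $F_H(x,x)=d(x,x)H=0\cdot H=\{0\}$ (or $\emptyset$ if $H=\emptyset$, but $H\neq\emptyset$ is assumed), so $0\in F_H(x,x)$. For (F2), fix $x_1,x_2,x_3\in X$ and write $\alpha=d(x_1,x_2)$, $\beta=d(x_2,x_3)$, $\gamma=d(x_1,x_3)$. By the triangle inequality $\gamma\le\alpha+\beta$. Then $F_H(x_1,x_2)+F_H(x_2,x_3)=\alpha H+\beta H\subseteq(\alpha+\beta)H+K$ by the second implication in \eqref{r-hk}, and $(\alpha+\beta)H\subseteq\gamma H+K$ by the first implication in \eqref{r-hk} (applied with $0\le\gamma\le\alpha+\beta$); adding $K$ and using $K+K\subseteq K$ gives $(\alpha+\beta)H+K\subseteq\gamma H+K$, hence $F_H(x_1,x_2)+F_H(x_2,x_3)\subseteq F_H(x_1,x_3)+K$, which is (F2).

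For part (ii), the key observation is that $F_\delta=\cup\{d(x,x')H\mid d(x,x')\ge\delta\}$. Since $tH\subseteq H+K$ whenever $t\ge1$ (again by \eqref{r-hk}, as $1\le t$ gives $tH\subseteq 1\cdot H+K$) and more generally $\delta^{-1}d(x,x')\ge1$ scales $H$ upward modulo $K$, one gets the chain $\delta H\subseteq d(x,x')H+K$... wait, the direction I actually need is that every element of $F_\delta$ lies in $\delta(\text{something})$; more precisely I will show $z^*(z)\ge \delta\inf z^*(H)$ for $z\in F_\delta$ and $z^*\in K^+$. Indeed if $z\in d(x,x')H$ with $d(x,x')\ge\delta$, write $z=d(x,x')h$ with $h\in H$; then $z^*(z)=d(x,x')z^*(h)\ge\delta\,z^*(h)\ge\delta\inf z^*(H)$, using $z^*(h)\ge0$ since $H\subseteq K$ and $z^*\in K^+$. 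So if $\inf z^*_H(H)=:c>0$ then $\inf z^*_H(F_\delta)\ge\delta c>0$ for all $\delta>0$, giving (F3) with $z^*_F=z^*_H$. Conversely (F3) $\Rightarrow$ \eqref{zv-f4} is trivial (existential over $z^*$ weakens the uniform choice), and \eqref{zv-f4} $\Rightarrow$ [$\exists z^*$, $\inf z^*(H)>0$]: take $\delta=1$, so there is $z^*\in K^+$ with $\inf z^*(F_1)>0$; since $H\subseteq F_1$ would require $d(x,x')\ge1$ for points realizing $H=d(x,x')H$, I instead note $H\subseteq\delta^{-1}F_\delta$ for any $\delta$, or more cleanly use that for any $h\in H$ and any $\delta>0$ there exist $x,x'$ with $d(x,x')\ge\delta$ only if $X$ has points that far apart — this is the one genuinely delicate point, so I would handle the degenerate case ($\operatorname{diam}X<\delta$ makes $F_\delta=\emptyset$ and (F3) holds vacuously for that $\delta$) by restricting attention to $\delta\le\operatorname{diam}X$ and rescaling, which is exactly why the equivalences are phrased the way they are. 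For the locally convex case, $0\notin\operatorname{cl}(H+K)$ is equivalent, by a separation argument in the locally convex space $Y$, to the existence of a continuous linear functional $z^*$ with $\inf z^*(H+K)>0$, and since $\inf z^*(H+K)\le\inf z^*(H)$ while $z^*\in K^+$ is forced by $z^*$ being bounded below on the cone $H+K\supseteq K$, this matches the scalar condition. Finally the \eqref{zv-f5} $\iff$ \eqref{zv-f5b} equivalence follows the same pattern: a sequence $(z_n)\subseteq F_\delta$ has the form $z_n=t_n h_n$ with $t_n\ge\delta$, $h_n\in H$; since $z^*(z_n)=t_n z^*(h_n)$ and $t_n\ge\delta>0$ with $z^*(h_n)\ge0$, one has $\limsup z^*(z_n)>0\iff\limsup z^*(h_n)>0$ provided the $t_n$ stay bounded — and if $t_n\to\infty$ along a subsequence then $z^*(z_n)$ already blows up whenever any $z^*(h_n)$ stays bounded away from $0$, so the two limsup conditions agree; conversely given $(h_n)\subseteq H$, the sequence $(\delta h_n)\subseteq F_\delta$.

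The main obstacle I anticipate is the bookkeeping in part (ii) for the reverse implications, specifically extracting the scalar inequality $\inf z^*(H)>0$ from a condition stated in terms of $F_\delta$: one must be careful that $F_\delta$ can be empty (when $X$ has diameter $<\delta$), in which case conditions like \eqref{zv-f4} are vacuously true for that particular $\delta$ but the equivalence must still go through by quantifying over small $\delta$ or by the rescaling $F_\delta\supseteq\delta H$ whenever $X$ contains two points at distance $\ge\delta$; if $X$ is a single point the whole statement is trivial. Everything else is a direct computation with the scaling relation \eqref{r-hk} and positivity of functionals in $K^+$ on the cone $K\supseteq H$.
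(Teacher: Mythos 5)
Your verification is essentially correct, and it is the standard direct computation; note that the paper itself gives no proof of Lemma \ref{zv-lem1} but refers to [KhaTamZal:15b, Lem.~10.1.1], so there is no internal argument to compare against. Part (i) via (\ref{r-hk}) and the triangle inequality is exactly right, as is the chain ``$\inf z^{\ast}(H)>0 \Rightarrow \inf z^{\ast}(F_{\delta})\geq\delta\inf z^{\ast}(H)>0 \Rightarrow$ (F3) $\Rightarrow$ (\ref{zv-f4})'', and the scalarization $z^{\ast}(z_{n})=t_{n}z^{\ast}(h_{n})\geq\delta z^{\ast}(h_{n})\geq0$ settles (\ref{zv-f5})$\Leftrightarrow$(\ref{zv-f5b}) without any boundedness discussion of the $t_{n}$.

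Two small points deserve cleaning up. First, the degenerate case you flagged is real but your resolution is slightly off: if $X$ is a singleton then $F_{\delta}=\emptyset$ for all $\delta>0$, so (F3) and (\ref{zv-f4}) hold vacuously while $\inf z^{\ast}(H)>0$ may fail for every $z^{\ast}\in K^{+}$ — the statement is then not ``trivial'' but actually needs $X$ to contain two distinct points (tacitly assumed here, as in the cited source). Once $x\neq x^{\prime}$ exist, take $\delta:=d(x,x^{\prime})>0$, so that $\delta H\subseteq F_{\delta}$ exactly and $\delta\inf z^{\ast}(H)\geq\inf z^{\ast}(F_{\delta})>0$; no rescaling gymnastics are needed. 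Second, in the separation step the inclusion $H+K\supseteq K$ you invoke to get $z^{\ast}\in K^{+}$ is false in general; the correct reason is that $z^{\ast}$ is bounded below on $h_{0}+K$ for a fixed $h_{0}\in H$, and $z^{\ast}(h_{0})+tz^{\ast}(k)\geq\alpha$ for all $t\geq0$ forces $z^{\ast}(k)\geq0$. With these repairs the argument is complete.
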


Clearly, if $H$ is $K$-convex, then $H+K$ also is. The function $F_{H} + K =
F_{H+K}$ satisfies (i) and (ii) of Lemma \ref{zv-lem1} and maps into
$\mathcal{P}(Y, K)$.

Note that the case $0\in H+K$ (that is $H\cap(-K)\neq\emptyset$) is not
interesting in the present context because (the metric of) $X$ is not
involved; indeed, in such a case $(x,A)\preceq_{F_{H}}(x^{\prime},A^{\prime})$
if and only if $A\leq_{K}^{l}A^{\prime}$. For this reason, in the sequel,
\emph{$H$ is a nonempty $K$--convex subset of $K\backslash(-K)$}. Moreover, we
denote by $\preceq_{H}$ and $\preceq_{H}^{1}$ the preorders $\preceq_{F_{H}}$
and $\preceq_{F_{H}}^{1}$, respectively.

\medskip

If (\ref{zv-f3}) holds for $z^{\ast}\in K^{+}$, then%
\begin{equation}
(x_{1},A_{1})\preceq_{F}(x_{2},A_{2})\Longrightarrow\inf z^{\ast}%
(A_{1})+d(x_{1},x_{2})\leq\inf z^{\ast}(A_{2}). \label{zv-eq3}%
\end{equation}
Indeed, from $(x_{1},A_{1})\preceq_{F}(x_{2},A_{2})$, we have that
$A_{2}\subset A_{1}+F(x_{1},x_{2})+K$, and so $z^{\ast}(A_{2})\subset z^{\ast
}(A_{1})+z^{\ast}(F(x_{1},x_{2}))+z^{\ast}(K)$; it follows that
\begin{align}
\inf z^{\ast}(A_{2})  &  \geq\inf\{z^{\ast}(A_{1})+z^{\ast}(F(x_{1}%
,x_{2}))+z^{\ast}(K)\}\nonumber\\
&  =\inf z^{\ast}(A_{1})+\inf z^{\ast}(F(x_{1},x_{2}))+\inf z^{\ast
}(K)\nonumber\\
&  =\inf z^{\ast}(A_{1})+\inf z^{\ast}(F(x_{1},x_{2}))\label{zv-eq3b}\\
&  \geq\inf z^{\ast}(A_{1})+d(x_{1},x_{2}),\nonumber
\end{align}
where, as usual, $\inf\emptyset:=+\infty$. Hence (\ref{zv-eq3}) holds. Using
(\ref{zv-eq3}) for $A_{1}\subset Y$ with $\inf z^{\ast}(A_{1})\in\mathbb{R}$,
we obtain that
\begin{equation}
\left[  (x_{1},A_{1})\preceq_{F}(x_{2},A_{2}),~(x_{2},A_{2})\preceq_{F}%
(x_{1},A_{1})\right]  \Longrightarrow\left[  x_{1}=x_{2},\ \inf z^{\ast}%
(A_{1})=z^{\ast}(A_{2})\right]  . \label{zv-eq4}%
\end{equation}
In fact (\ref{zv-eq4}) holds if $F$ verifies conditions (F1)--(F3) and
$A_{1}\subset Y$ is such that $\inf z_{F}^{\ast}(A_{1})\in\mathbb{R}$. Indeed
for $\delta:=d(x_{1},x_{2})$, from (\ref{zv-eq3b}) we get%
\[
\inf z_{F}^{\ast}(A_{2})\geq\inf z_{F}^{\ast}(A_{1})+\inf z_{F}^{\ast
}(F_{\delta}),\quad\inf z_{F}^{\ast}(A_{1})\geq\inf z_{F}^{\ast}(A_{2})+\inf
z_{F}^{\ast}(F_{\delta}),
\]
and so $\inf z_{F}^{\ast}(A_{2})\in\mathbb{R}$ and $-\left\vert \inf
z_{F}^{\ast}(A_{2})-\inf z_{F}^{\ast}(A_{1})\right\vert \geq\inf z_{F}^{\ast
}(F_{\delta})\geq0$.

Eq.\ (\ref{zv-eq4}) shows that $\preceq_{F}^{1}$ is antisymmetric (and so
$\preceq_{F}^{1}$ is a partial order) when $F$ verifies conditions (F1)--(F3)
with $z_{F}^{\ast}\in K^{\#}$. In what concerns $\preceq_{H}^{1}$ (for $H$ as
above, that is $H\subset K\backslash(-K)$ is nonempty and $K$--convex), we
have the following:%
\begin{equation}
(x_{1},y_{1})\preceq_{H}^{1}(x_{2},y_{2})\preceq_{H}^{1}(x_{1},y_{1}%
)\Longrightarrow x_{1}=x_{2}; \label{zv-eq2}%
\end{equation}
moreover, if $K$ is pointed, then $\preceq_{H}^{1}$ is antisymmetric. Indeed,
take $(x_{1},y_{1})\preceq_{H}^{1}(x_{2},y_{2})\preceq_{H}^{1}(x_{1},y_{1})$,
and assume $\alpha:=d(x_{1},x_{2})>0$. Then $\pm(y_{1}-y_{2})\in\alpha
H+K=\alpha(H+K)$, and so $0\in\alpha(H+K)$ by the convexity of $H+K$. Hence
$0\in H+K$, a contradiction. Therefore, (\ref{zv-eq2}) holds. Assume,
moreover, that $K$ is pointed and $(x_{1},y_{1})\preceq_{H}^{1}(x_{2}%
,y_{2})\preceq_{H}^{1}(x_{1},y_{1})$. Then, by (\ref{zv-eq2}) we have that
$x_{1}=x_{2}$, and so $\pm(y_{1}-y_{2})\in K$, whence $y_{1}=y_{2}$ because
$K$ is pointed. Hence $\preceq_{H}^{1}$ is antisymmetric.

For $F$ satisfying conditions (F1) and (F2), and $z^{\ast}\in K^{+}$, we
introduce the partial order $\preceq_{F,z^{\ast}}$ on $X\times2^{Y}$ by
\begin{equation}
(x_{1},A_{1})\preceq_{F,z^{\ast}}(x_{2},A_{2}):\iff\left\{
\begin{array}
[c]{l}%
(x_{1},A_{1})=(x_{2},A_{2})\text{ or}\\
(x_{1},A_{1})\preceq_{F}(x_{2},A_{2})\text{ and }\inf z^{\ast}(A_{1})<\inf
z^{\ast}(A_{2}).
\end{array}
\right.  \label{vz-eq5}%
\end{equation}
It is easy to verify that $\preceq_{F,z^{\ast}}$ is reflexive, transitive, and antisymmetric.

The restriction of $\preceq_{F,z^{\ast}}$ to $X\times Y$ is denoted by
$\preceq_{F,z^{\ast}}^{1}$ and is given by
\begin{equation}
(x_{1},y_{1})\preceq_{F,z^{\ast}}^{1}(x_{2},y_{2}):\iff\left\{
\begin{array}
[c]{l}%
(x_{1},y_{1})=(x_{2},y_{2})\text{ or}\\
(x_{1},y_{1})\preceq_{F}^{1}(x_{2},y_{2})\text{ and }z^{\ast}(y_{1})<z^{\ast
}(y_{2}).
\end{array}
\right.  \label{vz-eq6}%
\end{equation}
The partial order $\preceq_{F,z^{\ast}}^{1}$ was introduced and used in
\cite[p.\ 913]{GopTamZal:00} (see also \cite[p.\ 202]{GopRiaTamZal:03}) for
$F(x,x^{\prime}):=\{d(x,x^{\prime})k^{0}\}$ with $k^{0}\in K\setminus
(-\operatorname*{cl}K)$, while $\preceq_{F,z^{\ast}}$ was introduced and used
in \cite{TamZal:11}. A generalization can be found in \cite[Section 7.2,
p.131]{Ham:05} where $z^{\ast}$ was replaced by a general function
$f:Y\rightarrow\mathbb{R}\cup\{+\infty\}$ which is monotone wrt the second
component of an order relation on $X\times Y$. Such order relations are
basically two-component extensions of relations like the one defined in
(\ref{r-lefi}) below. Of course, for $z^{\ast}=0$ the relation $\preceq
_{F,z^{\ast}}$ reduces to the equality relation on $X\times2^{Y}$. If
$z^{\ast}\in K^{\#}$ and $0\notin F(x,x^{\prime})$ for $x\neq x^{\prime}$ one
has that $\preceq_{F,z^{\ast}}^{1}$ and $\preceq_{F}^{1}$ coincide.

We denote by $\preceq_{H,z^{\ast}}$ and $\preceq_{H,z^{\ast}}^{1}$ the partial
orders $\preceq_{F_{H},z^{\ast}}$ and $\preceq_{F_{H},z^{\ast}}^{1}$, respectively.


\section{The Brezis--Browder principle}

A basic preliminary result in getting EVP type results revealed to be the
Brezis--Browder principle. Consider $W$ a nonempty set and $t\subset W\times
W$ a transitive relation; the fact that $(w,w^{\prime})$ belongs to $t$ will
be denoted by $w\preccurlyeq w^{\prime}$ or $w^{\prime}\succcurlyeq w$.
Moreover, we shall identify $t$ and $\preceq$. As usual, we say that the
sequence $(w_{n})_{n\geq1}\subset W$ is $\preccurlyeq$--decreasing if
$w_{n+1}\preccurlyeq w_{n}$ for $n\geq1$; $(w_{n})_{n\geq1}\subset W$ is
strictly $\preccurlyeq$--decreasing if it is $\preccurlyeq$--decreasing and
$w_{n+1}\neq w_{n}$ for $n\geq1$. Moreover, we say that the nonempty set
$A\subset W$ is bounded from below (or lower bounded, or minorized) (wrt
$\preccurlyeq$) if there exists $v\in W$ such that $v\preccurlyeq w$ for all
$w\in A$. We set $S_{\preccurlyeq}(w):=\{w^{\prime}\in W\mid w^{\prime
}\preccurlyeq w\}$ for each $w\in W;$ when there is no danger of confusion we
write simply $S(w)$ instead of $S_{\preccurlyeq}(w)$. Of course,
$S_{\preccurlyeq}:W\rightrightarrows W$ is a multifunction whose domain is
$\operatorname*{dom}S_{\preccurlyeq}:=\{w\in W\mid S_{\preccurlyeq}%
(w)\neq\emptyset\}$. Clearly $w^{\prime}\in S_{\preccurlyeq}(w)\Rightarrow
S_{\preccurlyeq}(w^{\prime})\subset S_{\preccurlyeq}(w)$, and
$\operatorname*{dom}S=W$ when $\preceq$ is a preoder.

The next assumption will be used often in the sequel.

\begin{description}
\item[(Ab)] Any $\preccurlyeq$--decreasing sequence $(w_{n})_{n\geq1}\subset
W$ is minorized.
\end{description}

Observe that condition (Ab) is equivalent with the (apparently weaker) condition

\begin{description}
\item[(Ab')] Any strictly $\preccurlyeq$--decreasing sequence $(w_{n}%
)_{n\geq1}\subset W$ is minorized.
\end{description}

Indeed, clearly, (Ab) $\Rightarrow$ (Ab'). Conversely, assume that (Ab') holds
and take $(w_{n})_{n\geq1}\subset W$ a $\preccurlyeq$--decreasing sequence.
Set $P:=\{n\geq1\mid w_{n+1}\neq w_{n}\}$. If $P$ is finite, then there exists
$\overline{n}\geq1$ such that $w_{n+1}=w_{n}$ for $n\geq\overline{n}$, and so
$w_{n}=w_{\overline{n}}$ for $n\geq\overline{n};$ in this case
$w:=w_{\overline{n}}\leq w_{n}$ for all $n\geq1$. Assume that $P$ is infinite.
Then $P=\{n_{k}\mid k\geq1\}$ where $n_{k+1}>n_{k}$ for $k\in\mathbb{N}%
\setminus\{0\}$. Setting $w_{k}^{\prime}:=w_{n_{k}}$, we have that
$w_{k+1}^{\prime}\in S(w_{k}^{\prime})\setminus\{w_{k}^{\prime})$ for $k\geq
1$. By (Ab') there exists $w\in W$ such that $w\preccurlyeq w_{k}^{\prime}$
for $k\geq1$. But for $n\geq1$ there exists $k\geq1$ such that $n_{k}\geq n$,
and so $w\preccurlyeq w_{k}^{\prime}=w_{n_{k}}\preccurlyeq w_{n}$.

Assume that $\preceq$ is a transitive relation on $W$ and $\phi:W\rightarrow
\overline{\mathbb{R}}$. We say that $\phi$ is $\preceq$--increasing if
$w_{1}\preccurlyeq w_{2}$ implies $\phi(w_{1})\leq\phi(w_{2});$ $\phi$ is
strictly $\preceq$--increasing if $w_{1}\preccurlyeq w_{2}$ and $w_{1}\neq
w_{2}$ imply $\phi(w_{1})\leq\phi(w_{2})$.

Partial orders of the type $\preccurlyeq_{\phi}$ introduced in (\ref{r-lefi})
below will be used several times in the sequel.

\begin{lemma}
\label{lem-fi}Let $\preceq$ be a transitive relation on $W$, and let
$\phi:W\rightarrow\overline{\mathbb{R}}$ be $\preceq$--increasing. Let us set%
\begin{equation}
w_{1}\preccurlyeq_{\phi}w_{2}:\iff\big[w_{1}=w_{2},\text{ or }[w_{1}%
\preccurlyeq w_{2}\text{ and }\phi(w_{1})<\phi(w_{2})]\big].\label{r-lefi}%
\end{equation}
Then the following assertions hold:

\emph{(i)} $\preccurlyeq_{\phi}$ is a partial order on $W$ and $\phi$ is
strictly $\preceq_{\phi}$--increasing; moreover, $\preccurlyeq_{\phi}{}%
={}\preceq\cup{}\Delta_{W}$ whenever $\phi$ is strictly $\preceq$--increasing.

\emph{(ii)} If $\overline{w}\in W$ is a minimal point of $W$ wrt
$\preceq_{\phi}$, then $W\ni w^{\prime}\preccurlyeq\overline{w}$ implies
$\phi(w^{\prime})=\phi(\overline{w})$.

\emph{(iii)} If $(W,\preceq)$ verifies condition \emph{(Ab)}, then
$(W,\preccurlyeq_{\phi})$ verifies condition \emph{(Ab)}, too.
\end{lemma}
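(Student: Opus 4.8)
The plan is to verify the three assertions of Lemma \ref{lem-fi} in turn, each being a short argument from the definition \eqref{r-lefi}.

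\textbf{Part (i).} First I would check that $\preccurlyeq_{\phi}$ is a partial order. Reflexivity is immediate since $w = w$ gives $w \preccurlyeq_{\phi} w$. For antisymmetry, suppose $w_1 \preccurlyeq_{\phi} w_2$ and $w_2 \preccurlyeq_{\phi} w_1$ with $w_1 \neq w_2$; then by definition $\phi(w_1) < \phi(w_2)$ and $\phi(w_2) < \phi(w_1)$, a contradiction, so $w_1 = w_2$. For transitivity, take $w_1 \preccurlyeq_{\phi} w_2 \preccurlyeq_{\phi} w_3$; if either relation is an equality the conclusion is trivial, and otherwise $w_1 \preccurlyeq w_2 \preccurlyeq w_3$ with $\phi(w_1) < \phi(w_2) < \phi(w_3)$, so transitivity of $\preceq$ gives $w_1 \preccurlyeq w_3$ and the strict inequality $\phi(w_1) < \phi(w_3)$ completes the step. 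That $\phi$ is strictly $\preceq_{\phi}$-increasing is built into the definition: $w_1 \preccurlyeq_{\phi} w_2$ with $w_1 \neq w_2$ forces $\phi(w_1) < \phi(w_2)$. Finally, the identity $\preccurlyeq_{\phi} = {\preceq} \cup \Delta_W$ under the extra hypothesis: the inclusion $\subseteq$ is clear from the definition; for $\supseteq$, note $\Delta_W \subseteq {\preccurlyeq_{\phi}}$ by reflexivity, and if $w_1 \preccurlyeq w_2$ with $w_1 \neq w_2$ then strict $\preceq$-monotonicity of $\phi$ yields $\phi(w_1) < \phi(w_2)$, hence $w_1 \preccurlyeq_{\phi} w_2$.

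\textbf{Part (ii).} Suppose $\overline{w}$ is minimal wrt $\preccurlyeq_{\phi}$ and $w' \preccurlyeq \overline{w}$. Since $\phi$ is $\preceq$-increasing, $\phi(w') \leq \phi(\overline{w})$. If the inequality were strict, then $w' \neq \overline{w}$ and the definition would give $w' \preccurlyeq_{\phi} \overline{w}$ with $w' \neq \overline{w}$, contradicting minimality of $\overline{w}$. Hence $\phi(w') = \phi(\overline{w})$.

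\textbf{Part (iii).} Let $(w_n)_{n \geq 1}$ be a $\preccurlyeq_{\phi}$-decreasing sequence in $W$. Since $\preccurlyeq_{\phi} \subseteq {\preceq} \cup \Delta_W$, for each $n$ we have either $w_{n+1} \preccurlyeq w_n$ or $w_{n+1} = w_n$; in the latter case $w_{n+1} \preccurlyeq w_n$ still holds because $\preceq$ need not be reflexive, so instead I would argue as in the (Ab)$\Leftrightarrow$(Ab$'$) reduction above. Set $P := \{ n \geq 1 \mid w_{n+1} \neq w_n \}$. If $P$ is finite, the sequence is eventually constant, say $w_n = w_{\overline{n}}$ for $n \geq \overline{n}$, and then $w_{\overline{n}}$ serves: but we still must produce a lower bound wrt $\preccurlyeq_{\phi}$, and reflexivity gives $w_{\overline{n}} \preccurlyeq_{\phi} w_{\overline{n}} = w_n$ for $n \geq \overline{n}$, while for the finitely many earlier indices we may simply take the minimum $\phi$-value or note the trivial chain; more carefully, since each $w_{n+1} \preccurlyeq_{\phi} w_n$ and $\preccurlyeq_{\phi}$ is transitive, $w_{\overline{n}} \preccurlyeq_{\phi} w_n$ for all $n \geq 1$. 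If $P$ is infinite, write $P = \{ n_k \mid k \geq 1 \}$ with $n_k$ strictly increasing and put $w'_k := w_{n_k}$; then $w'_{k+1} \preccurlyeq w'_k$ with $w'_{k+1} \neq w'_k$ along this subsequence, so $(w'_k)$ is $\preccurlyeq$-decreasing and by (Ab) for $(W, \preceq)$ there is $w \in W$ with $w \preccurlyeq w'_k$ for all $k$. For arbitrary $n$ pick $k$ with $n_k \geq n$; then $w \preccurlyeq w_{n_k} \preccurlyeq_{\phi} w_n$ (the last step uses transitivity of $\preccurlyeq_{\phi}$ along the chain from $n_k$ down to $n$), and since $\phi$ is $\preceq$-increasing $\phi(w) \leq \phi(w_{n_k})$; if $w = w_n$ we are done, and otherwise $\phi(w) \leq \phi(w_{n_k}) \leq \phi(w_n)$, and in fact along the decreasing chain $\phi(w_{n_k}) < \phi(w_n)$ unless $w_{n_k} = w_n$, so $\phi(w) < \phi(w_n)$ or $w = w_n$, giving $w \preccurlyeq_{\phi} w_n$. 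Hence $(W, \preccurlyeq_{\phi})$ satisfies (Ab).

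The only real subtlety, and the step I expect to need the most care, is part (iii): because $\preceq$ is merely transitive rather than a preorder, one cannot pass directly from a $\preccurlyeq_{\phi}$-decreasing sequence to a $\preccurlyeq$-decreasing one, so the passage to the strictly-decreasing subsequence indexed by $P$ (exactly as in the (Ab)$\Leftrightarrow$(Ab$'$) argument already given in the text) is essential, together with the careful bookkeeping to upgrade the lower bound $w \preccurlyeq w'_k$ into a lower bound wrt $\preccurlyeq_{\phi}$ via the monotonicity of $\phi$.
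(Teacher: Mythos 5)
Parts (i) and (ii) of your proof coincide with the paper's argument. In part (iii) the two proofs diverge slightly. The paper passes from the $\preccurlyeq_{\phi}$--decreasing sequence to a $\preccurlyeq$--minorant $\widetilde{w}$ and then splits on whether $\phi(\widetilde{w})=\phi(w_{n_{0}})$ for some $n_{0}$: if so, the definition of $\preccurlyeq_{\phi}$ forces $w_{n}=w_{n_{0}}$ for $n\geq n_{0}$ and $w_{n_{0}}$ is the required $\preccurlyeq_{\phi}$--minorant; if not, $\phi(\widetilde{w})<\phi(w_{n})$ for all $n$ and $\widetilde{w}$ itself works. You instead split on whether the set $P$ of indices with $w_{n+1}\neq w_{n}$ is finite, which has the virtue of making explicit a point the paper's proof passes over quickly: since $\preceq$ is only assumed transitive, a $\preccurlyeq_{\phi}$--decreasing sequence with repeated consecutive terms is not literally $\preccurlyeq$--decreasing, and one must pass to the subsequence indexed by $P$, exactly as in the paper's proof that (Ab) is equivalent to (Ab').

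However, your last step in the case where $P$ is infinite contains a small non sequitur. From $w\preccurlyeq w_{n_{k}}$, $\phi(w)\leq\phi(w_{n_{k}})\leq\phi(w_{n})$, and ``$\phi(w_{n_{k}})<\phi(w_{n})$ unless $w_{n_{k}}=w_{n}$'' you conclude ``$\phi(w)<\phi(w_{n})$ or $w=w_{n}$''; but in the subcase $w_{n_{k}}=w_{n}$ with $w\neq w_{n}$ these premises give only $\phi(w)\leq\phi(w_{n})$, which does not yield $w\preccurlyeq_{\phi}w_{n}$. The subcase is in fact vacuous, but that needs to be said: since $P$ is infinite and $n_{k}\in P$, one has $\phi(w_{n_{k+1}})\leq\phi(w_{n_{k}+1})<\phi(w_{n_{k}})$, and $w\preccurlyeq w_{n_{k+1}}$ then gives $\phi(w)\leq\phi(w_{n_{k+1}})<\phi(w_{n_{k}})\leq\phi(w_{n})$, so the strict inequality $\phi(w)<\phi(w_{n})$ holds for every $n$. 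With that one line added (or with the paper's dichotomy on $\phi(\widetilde{w})$ in its place), your part (iii) is complete.
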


\begin{proof}
(i) From its very definition $\preccurlyeq_{\phi}$ is reflexive; moreover,
using the transitivity of $\preccurlyeq$ and the fact that $\phi$ is
$\preccurlyeq$--increasing, one gets immediately that $\preccurlyeq_{\phi}$ is
transitive. Now, take $w_{1}$, $w_{2}\in W$ such that $w_{1}\preccurlyeq
_{\phi}w_{2}$ and $w_{2}\preccurlyeq_{\phi}w_{1}$. Assuming that $w_{1}\neq
w_{2}$, from (\ref{r-lefi}) we get the contradiction $\phi(w_{1})<\phi
(w_{2})<\phi(w_{1})$. Hence $\preccurlyeq_{\phi}$ is also antisymmetric, and
so $\preccurlyeq_{\phi}$ is a partial order. The fact that $\phi$ is strictly
$\preceq_{\phi}$--increasing follows from the very definition of
$\preceq_{\phi}$. The equality $\preccurlyeq_{\phi}=\preceq\cup\Delta_{W}$ is
obvious when $\phi$ is strictly $\preceq$--increasing.

(ii) It is clear that $\phi:(W,\preccurlyeq_{\phi})\rightarrow\overline
{\mathbb{R}}$ is strictly $\preccurlyeq_{\phi}$--increasing. Consider a
minimal point $\overline{w}\in W$ wrt $\preceq_{\phi}$, and take $w^{\prime
}\preccurlyeq\overline{w}$; then $\phi(w^{\prime})\preccurlyeq\phi
(\overline{w})$. If $\phi(w^{\prime})<\phi(\overline{w})$ then $w^{\prime
}\preccurlyeq_{\phi}\overline{w}$ and $w^{\prime}\neq\overline{w}$,
contradicting the minimality of $\overline{w}$ wrt $\preccurlyeq_{\phi}$.
Hence $\phi(w^{\prime})=\phi(\overline{w})$.

(iii) Take $(w_{n})_{n\geq1}\subset W$ a $\preccurlyeq_{\phi}$--decreasing
sequence. Then $(w_{n})_{n\geq1}$ is $\preccurlyeq$--decreasing, and so there
exists $\widetilde{w}\in E$ such that $\widetilde{w}\preccurlyeq w_{n}$ for
$n\geq1$; hence $\phi(\widetilde{w})\leq\phi(w_{n})$ for $n\geq1$. If
$\phi(\widetilde{w})=\phi(w_{n_{0}})$ for some $n_{0}\geq1$ then
$w_{n}=w_{n_{0}}$ for $n\geq n_{0}$, and so $w:=w_{n_{0}}\preccurlyeq_{\phi
}w_{n}$ for $n\geq1$. In the contrary case, $\phi(\widetilde{w})<\phi(w_{n})$,
and so $\widetilde{w}\preccurlyeq_{\phi}w_{n}$, for $n\geq1$.
\end{proof}

\medskip

The next result is a slight extension of the celebrated Brezis--Browder principle.

\begin{theorem}
\label{ex-BB}\emph{(BB-principle)} Let $\preceq$ be a transitive relation on
$W$ satisfying \emph{(Ab)}, and let $\phi:W\rightarrow\mathbb{R}$ be a lower
bounded $\preceq$--increasing function. Then

\emph{(i)} for every $w\in\operatorname*{dom}S_{\preceq}$ there exists
$\overline{w}\in S_{\preceq}(w)$ such that $\phi(w^{\prime})=\phi(\overline
{w})$ for all $w^{\prime}\in S_{\preceq}(\overline{w});$

\emph{(ii)} moreover, if $\phi$ is strictly $\preceq$--increasing, then
$\preccurlyeq$ is antisymmetric and for every $w\in\operatorname*{dom}%
S_{\preceq}$ there exists $\overline{w}\in S_{\preceq}(w)$ such that
$S_{\preceq}(\overline{w})\subset\{\overline{w}\}$.
\end{theorem}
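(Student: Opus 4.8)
The plan is to prove part (i) by a countable induction that constructs a $\preceq$-decreasing sequence along which $\phi$ approaches its infimum over the relevant sublevel sets, and then to use (Ab) to produce a minorant that turns out to be the desired $\overline w$. Concretely, fix $w\in\operatorname{dom}S_\preceq$ and set $w_1:=w$ (or any element of $S_\preceq(w)$). Given $w_n$, consider $\beta_n:=\inf\{\phi(w')\mid w'\in S_\preceq(w_n)\}$, which is a real number since $\phi$ is lower bounded and $S_\preceq(w_n)\neq\emptyset$. Choose $w_{n+1}\in S_\preceq(w_n)$ with $\phi(w_{n+1})\le\beta_n+\tfrac1n$ (or $\le\beta_n+2^{-n}$). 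Because $w_{n+1}\preccurlyeq w_n$, transitivity gives $S_\preceq(w_{n+1})\subset S_\preceq(w_n)$, hence the sequence $(\beta_n)$ is nondecreasing, and since $\phi$ is $\preceq$-increasing we also have $\phi(w_{n+1})\le\phi(w_n)$, so $(w_n)$ is $\preceq$-decreasing. By (Ab) there is $\overline w\in W$ with $\overline w\preccurlyeq w_n$ for all $n\ge1$; in particular $\overline w\in S_\preceq(w_1)\subseteq S_\preceq(w)$.

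Next I would show this $\overline w$ works. Take any $w'\in S_\preceq(\overline w)$. For each $n$, $w'\preccurlyeq\overline w\preccurlyeq w_n$ gives $w'\in S_\preceq(w_n)$, so $\phi(w')\ge\beta_n$. Also $\overline w\in S_\preceq(w_n)$ gives $\phi(\overline w)\ge\beta_n$; combined with $\overline w\preccurlyeq w_{n}$ forcing $\overline w\in S_\preceq(w_{n})$ and hence $\phi(\overline w)\ge \beta_{n}$ — more usefully, $\overline w\preccurlyeq w_n$ yields $\phi(\overline w)\le\phi(w_n)$, wait, that is the wrong direction, so instead I use $\overline w\in S_\preceq(w_{n-1})$ which by the defining inequality of $w_n$ as an almost-infimizer is not directly it; the clean route is: from $\overline w\preccurlyeq w_{n+1}$ we get $\phi(\overline w)\le\phi(w_{n+1})\le\beta_n+\tfrac1n$, and from $w'\in S_\preceq(w_n)$ we get $\phi(w')\ge\beta_n\ge\phi(\overline w)-\tfrac1n$. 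Meanwhile monotonicity along $w'\preccurlyeq\overline w$ gives $\phi(w')\le\phi(\overline w)$. Letting $n\to\infty$ yields $\phi(w')\ge\phi(\overline w)$, hence $\phi(w')=\phi(\overline w)$. This proves (i).

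For part (ii), assume $\phi$ is strictly $\preceq$-increasing. Antisymmetry is immediate: if $w_1\preccurlyeq w_2$ and $w_2\preccurlyeq w_1$ with $w_1\neq w_2$, then strict monotonicity gives $\phi(w_1)<\phi(w_2)$ and $\phi(w_2)<\phi(w_1)$, a contradiction. For the minimality statement, apply part (i) to get $\overline w\in S_\preceq(w)$ with $\phi$ constant on $S_\preceq(\overline w)$. Now if $w'\in S_\preceq(\overline w)$ and $w'\neq\overline w$, then $w'\preccurlyeq\overline w$ with $w'\neq\overline w$ forces $\phi(w')<\phi(\overline w)$ by strict monotonicity, contradicting $\phi(w')=\phi(\overline w)$. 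Hence $S_\preceq(\overline w)\subset\{\overline w\}$. (Alternatively, one can invoke Lemma \ref{lem-fi}: since $\phi$ is strictly increasing, $\preccurlyeq_\phi={}\preceq\cup\,\Delta_W$, and a minimal point of $\preceq_\phi$ is exactly a point $\overline w$ with $S_\preceq(\overline w)\subset\{\overline w\}$; but the direct argument above is shorter.)

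The main obstacle is getting the inequality bookkeeping in part (i) exactly right — in particular making sure one compares $\phi(\overline w)$ with the correct $\beta_n$ (the one attached to $w_n$, not $w_{n+1}$) and exploiting that $\overline w$ lies in $S_\preceq(w_n)$ for \emph{every} $n$, which is what pins $\phi(\overline w)$ down to $\sup_n\beta_n=\lim_n\beta_n$. The subtle point worth stating carefully is that $\overline w$ need not appear in the sequence, so one cannot argue by "the sequence stabilizes"; everything has to be squeezed through the two-sided estimate $\beta_n\le\phi(\overline w)\le\phi(w_{n+1})\le\beta_n+\varepsilon_n$ together with $\phi(w')\ge\beta_n$ for arbitrary $w'\in S_\preceq(\overline w)$. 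One should also remember to handle the trivial reflexive case $w'=\overline w$ separately (or note it is covered since equality holds there trivially), and to note that $\operatorname{dom}S_\preceq$ being possibly a proper subset of $W$ is not a problem because $S_\preceq(w_n)\neq\emptyset$ is guaranteed inductively by the construction.
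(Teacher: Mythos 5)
Your argument is essentially the paper's own ``alternative (direct) proof'': a countable induction producing a $\preccurlyeq$--decreasing sequence of almost-infimizers of $\beta_n=\inf\phi(S_{\preceq}(w_n))$, followed by an application of (Ab) and the two-sided squeeze $\beta_n\le\phi(w')\le\phi(\overline{w})\le\phi(w_{n+1})\le\beta_n+\varepsilon_n$. That part of the bookkeeping, and all of part (ii), is correct.

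There is, however, one genuine (if small) gap: your closing claim that ``$S_{\preceq}(w_n)\neq\emptyset$ is guaranteed inductively by the construction'' is false as stated. The theorem assumes only that $\preccurlyeq$ is \emph{transitive} (the blanket convention that $\preceq$ is a preorder is declared in the paper only \emph{after} this theorem), so $w_{n+1}\in S_{\preceq}(w_n)$ does not imply $w_{n+1}\in S_{\preceq}(w_{n+1})$, and $S_{\preceq}(w_{n+1})$ may well be empty. A two-point example: $W=\{a,b\}$ with the relation consisting of the single pair $b\preccurlyeq a$; then $S(a)=\{b\}$, $S(b)=\emptyset$, (Ab) holds vacuously, and your induction dies at the second step because $\beta_2=\inf\emptyset=+\infty$. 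The repair is exactly what the paper's direct proof does: allow the process to terminate --- if $S_{\preceq}(w_{n+1})=\emptyset$ then $\overline{w}:=w_{n+1}$ already satisfies the conclusion vacuously (and one may also stop when $\phi(w_n)=\beta_n$, taking $\overline{w}:=w_n$, noting $w_n\in S_{\preceq}(w)$ for $n\ge 1$ by transitivity). With that case distinction added, your proof is complete. A cosmetic remark on (ii): you correctly use the strict inequality $\phi(w_1)<\phi(w_2)$ in the definition of ``strictly $\preceq$--increasing''; the paper's displayed definition has a typographical $\leq$ there, but the strict version is clearly what is intended and is what both you and the paper actually use.
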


\begin{proof}
Using Lemma \ref{lem-fi} (i) and (iii), we have that $\preceq_{\phi}$ is a
partial order verifying (Ab) and $\phi$ is strictly $\preceq_{\phi}%
$--increasing. Let $w\in\operatorname*{dom}S_{\preceq};$ using the second part
of \cite[Cor.~1]{BreBro:76} (for the reversed order), there exists a minimal
point $\overline{w}$ of $W$ wrt $\preceq_{\phi}$ such that $\overline
{w}\preceq_{\phi}w$. By Lemma \ref{lem-fi} (ii), $W\ni w^{\prime}%
\preccurlyeq\overline{w}$ implies $\phi(w^{\prime})=\phi(\overline{w});$ hence
(i) holds. Assume moreover, that $\phi$ is strictly $\preceq$--increasing and
take $w^{\prime}\in S_{\preceq}(\overline{w})\setminus\{\overline{w}\}$ (if
possible). Then $w^{\prime}\preceq_{\phi}\overline{w}$, and so we get the
contradiction $w^{\prime}=\overline{w}$ by the $\preceq_{\phi}$-minimality of
$\overline{w}$. Hence (ii) holds, too.
\end{proof}

\medskip

As observed by Corneliu Ursescu (see \cite[p.~120]{Car:03}) and Turinici (see
\cite{Tur:14}), the BB-principle is valid also for $\preceq$--increasing
functions $\phi:(W,\preceq)\rightarrow\overline{\mathbb{R}}$. The
corresponding version of Theorem \ref{ex-BB} is the following result. For
getting it (as in \cite{Car:03} and \cite{Tur:14}) it is sufficient to
consider the strictly increasing function $\varphi:\overline{\mathbb{R}%
}\rightarrow\mathbb{R}$ defined by $\varphi(t):=t/\left(  1+\left\vert
t\right\vert \right)  $ for $t\in\mathbb{R}$, $\varphi(\pm\infty):=\pm1$ and
to apply Theorem \ref{ex-BB} for $\varphi\circ\phi:W\rightarrow\mathbb{R}$.

\begin{theorem}
\label{ex-eBB}\emph{(extended BB-principle)} Let $\preceq$ be a transitive
relation on $W$ satisfying \emph{(Ab)}, and let $\phi:W\rightarrow
\overline{\mathbb{R}}$ be a $\preceq$--increasing function. Then the
conclusions of Theorem \ref{ex-BB} hold.
\end{theorem}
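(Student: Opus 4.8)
The plan is to reduce Theorem~\ref{ex-eBB} to Theorem~\ref{ex-BB} by composing $\phi$ with a fixed order isomorphism of $\overline{\mathbb{R}}$ onto a bounded subinterval of $\mathbb{R}$, exactly as indicated in the paragraph preceding the statement. Concretely, I would introduce $\varphi:\overline{\mathbb{R}}\to\mathbb{R}$ by $\varphi(t):=t/(1+|t|)$ for $t\in\mathbb{R}$ and $\varphi(\pm\infty):=\pm1$, and then set $\psi:=\varphi\circ\phi:W\to\mathbb{R}$. The whole argument hinges on two elementary facts about $\varphi$: it is strictly increasing on all of $\overline{\mathbb{R}}$ (with the usual order $-\infty<t<+\infty$), and its range is contained in $[-1,1]$, hence $\psi$ is bounded. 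I would verify strict monotonicity of $\varphi$ directly: on $\mathbb{R}$ one has $\varphi'(t)=1/(1+|t|)^2>0$ (or simply check $t\mapsto t/(1+|t|)$ is increasing on $[0,\infty)$ and odd), and the boundary values $\pm1$ are the suprema/infima, not attained, so monotonicity extends across $\pm\infty$.

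Next I would record the transfer of hypotheses. Since $\varphi$ is (non-strictly, and in fact strictly) increasing, $\psi=\varphi\circ\phi$ is $\preceq$--increasing whenever $\phi$ is: $w_1\preccurlyeq w_2$ gives $\phi(w_1)\le\phi(w_2)$, hence $\psi(w_1)\le\psi(w_2)$. Likewise, because $\varphi$ is strictly increasing, $\psi$ is strictly $\preceq$--increasing whenever $\phi$ is: $w_1\preccurlyeq w_2$ with $w_1\neq w_2$ gives $\phi(w_1)<\phi(w_2)$, hence $\psi(w_1)<\psi(w_2)$. Boundedness of $\psi$ is immediate from $|\psi|\le 1$, and of course $\operatorname{dom}S_{\preceq}$ is unchanged since the relation $\preceq$ is untouched. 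Thus $(W,\preceq,\psi)$ satisfies all the hypotheses of Theorem~\ref{ex-BB}.

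Now I would simply invoke Theorem~\ref{ex-BB}. For part~(i): given $w\in\operatorname{dom}S_{\preceq}$, Theorem~\ref{ex-BB}(i) applied to $\psi$ yields $\overline{w}\in S_{\preceq}(w)$ with $\psi(w')=\psi(\overline w)$ for all $w'\in S_{\preceq}(\overline w)$; since $\varphi$ is injective, this forces $\phi(w')=\phi(\overline w)$ for all such $w'$, which is the conclusion of Theorem~\ref{ex-BB}(i) for $\phi$. For part~(ii): if $\phi$ is strictly $\preceq$--increasing, then so is $\psi$, and Theorem~\ref{ex-BB}(ii) applied to $\psi$ gives that $\preccurlyeq$ is antisymmetric and that for every $w\in\operatorname{dom}S_{\preceq}$ there is $\overline w\in S_{\preceq}(w)$ with $S_{\preceq}(\overline w)\subset\{\overline w\}$ — the statements of Theorem~\ref{ex-BB}(ii) are about $\preceq$ itself and carry over verbatim.

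The only genuine point requiring care — and the one I would state explicitly rather than wave at — is the behaviour of $\varphi$ at $\pm\infty$: one must check that $\varphi$ is strictly monotone as a map on the extended reals, i.e.\ that $\varphi(-\infty)=-1<\varphi(t)<1=\varphi(+\infty)$ for every $t\in\mathbb{R}$, so that no information is lost when $\phi$ takes infinite values and $\psi$ is still genuinely real-valued and bounded. Everything else is a routine composition argument, and there is no countable-induction or order-theoretic obstacle here since all of that work is already packaged inside Theorem~\ref{ex-BB}. I would therefore keep the proof to a few lines, essentially: ``Let $\varphi$ be as above; it is strictly increasing with range in $[-1,1]$. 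Then $\psi:=\varphi\circ\phi$ is a bounded $\preceq$--increasing function, strictly $\preceq$--increasing if $\phi$ is, so Theorem~\ref{ex-BB} applies to $\psi$; composing back with the injective $\varphi$ (for (i)) and noting that (ii) concerns only $\preceq$ gives the claim.''
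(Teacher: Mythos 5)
Your proposal is correct and follows exactly the route the paper itself indicates in the paragraph preceding the statement: compose $\phi$ with the strictly increasing bijection $\varphi(t)=t/(1+|t|)$, $\varphi(\pm\infty):=\pm1$, apply Theorem \ref{ex-BB} to the bounded $\preceq$--increasing function $\varphi\circ\phi$, and pull the conclusion back via injectivity of $\varphi$. The extra care you take at $\pm\infty$ and in transferring (strict) monotonicity is exactly the right thing to make explicit; nothing is missing.
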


Having in view several recent extensions of the BB-principle, we provide below
a proof of Theorem \ref{ex-BB}~(i) without using Lemma \ref{lem-fi} or
\cite[Cor.~1]{BreBro:76}.

\medskip

\emph{Alternative (direct) proof of Theorem \ref{ex-BB}~(i) }[assertion (ii)
follows immediately from (i)].\emph{ }

For $w\in W$ set $\phi_{w}:=\inf\{\phi(u)\mid u\in S(w)\}$ with the convention
$\inf\emptyset:=+\infty$. Clearly, $-\infty<\bar{\gamma}:=\inf\phi(W)\leq
\phi_{w}\leq\phi(w)$ for $w\in\operatorname*{dom}S$, and $\phi_{w}=\infty$ for
$w\notin\operatorname*{dom}S$; moreover, for $w^{\prime}\preccurlyeq w$ we
have that $\phi_{w^{\prime}}\geq\phi_{w}$. Consider $(\varepsilon_{n}%
)_{n\geq1}\subset{}]0,\infty\lbrack$ with $\varepsilon_{n}\rightarrow0$. Fix
$w\in\operatorname*{dom}S$ and set $w_{0}:=w$. If $\phi_{w_{0}}=\phi(w_{0})$,
$\overline{w}:=w_{0}$ is the desired element. In the contrary case, there
exists $w_{1}\in S(w_{0})$ such that $\phi(w_{1})<\min\{\phi(w_{0}%
),\phi_{w_{0}}+\varepsilon_{1}\}$. If $w_{1}\notin\operatorname*{dom}S$ or
$\phi_{w_{1}}=\phi(w_{1})$, $\overline{w}:=w_{1}$ is the desired element.
Continuing in this way, either the process stops at the step $n\geq1$ because
$w_{n}\notin\operatorname*{dom}S$ or $\phi_{w_{n}}=\phi(w_{n})$, in which case
$\overline{w}:=w_{n}$ is the desired element, or, else we get the (strictly)
$\preccurlyeq$--decreasing sequence $(w_{n})_{n\geq1}$ such that
\[
\phi(w_{n+1})<\min\{\phi(w_{n}),\phi_{w_{n}}+\varepsilon_{n+1}\}\quad\forall
n\geq0.
\]
Therefore, $(w_{n})_{n\geq0}$ is $\preccurlyeq$--decreasing, $(\phi_{w_{n}%
})_{n\geq0}$ is increasing and $\left(  \phi(w_{n})\right)  _{n\geq0}$ is
strictly decreasing (in $\mathbb{R}$) with
\[
\phi_{w_{n}}\leq\phi_{w_{n+1}}\leq\phi(w_{n+1})<\phi_{w_{n}}+\varepsilon
_{n+1}\quad\forall n\geq0.
\]
Hence
\[
\phi(w_{n+1})<\phi(w_{n})~\text{ and }~0\leq\phi(w_{n+1})-\inf\{\phi(u)\mid
u\in S(w_{n})\}<\varepsilon_{n+1}\quad\forall n\geq0.
\]
It follows that there exists $\gamma\in\mathbb{R}$ such that $\lim
_{n\rightarrow\infty}\phi_{w_{n}}=\lim_{n\rightarrow\infty}\phi(w_{n})=\gamma
$, and so $\phi_{w_{n}}\leq\gamma\leq\phi(w_{n})$ for $n\geq0$. By (Ab') we
get $\overline{w}\in W$ such that $\overline{w}\preccurlyeq w_{n}$, and so
$S(\overline{w})\cup\{\overline{w}\}\subset S(w_{n})\subset S(w_{0})$ for
every $n\geq0$; in particular, $\overline{w}\in S(w_{0})$. Hence for
$w^{\prime}\in S(\overline{w})\cup\{\overline{w}\}$ we have that $\phi_{w_{n}%
}\leq\phi(w^{\prime})\leq\phi(w_{n})$ for all $n\geq1$. Taking the limit for
$n\rightarrow\infty$ we get $\phi(w^{\prime})=\phi(\overline{w})$ $(=\gamma)$
for $w^{\prime}\in S(\overline{w})$. Hence $\overline{w}$ is the desired element.

\medskip

The above proof of Theorem \ref{ex-BB} shows that the next result holds true.

\begin{theorem}
\label{r-Qiu0}Let $\preceq$ be a transitive relation on $W$ and let
$\phi:W\rightarrow\mathbb{R}$ be a lower bounded $\preceq$--increasing
function. Assume that

\begin{description}
\item[(AQ)] any strictly $\preccurlyeq$--decreasing sequence $(w_{n})_{n\geq
1}\subset W$ with $\left(  \phi(w_{n})\right)  _{n\geq1}$ strictly decreasing
and $\phi(w_{n+1})-\inf\{\phi(v)\mid v\preccurlyeq w_{n}\}\rightarrow0$ is minorized.
\end{description}

Then assertion \emph{(i)} of Theorem \ref{ex-BB} holds. Moreover, if $\phi$ is
strictly $\preceq$--increasing, then assertion \emph{(ii)} of Theorem
\ref{ex-BB} holds, too.
\end{theorem}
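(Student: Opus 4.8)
The plan is to observe that Theorem \ref{r-Qiu0} is essentially a bookkeeping refinement of the alternative direct proof of Theorem \ref{ex-BB}~(i) given just above: the only place where condition (Ab) (or (Ab')) is actually invoked in that argument is at the very end, and it is applied not to an \emph{arbitrary} $\preccurlyeq$--decreasing sequence but to the specific sequence $(w_n)_{n\ge 0}$ manufactured by the $\varepsilon_n$--greedy construction, which by construction has extra properties. So the proof will consist in rereading that construction and checking that the constructed sequence satisfies the hypothesis of (AQ), so that (AQ) can replace (Ab') at that step.

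Concretely, I would first reproduce the setup verbatim: set $\phi_w:=\inf\{\phi(u)\mid u\in S(w)\}$, note $-\infty<\bar\gamma:=\inf\phi(W)\le\phi_w\le\phi(w)$ for $w\in\operatorname*{dom}S$, and that $w'\preccurlyeq w$ implies $\phi_{w'}\ge\phi_w$; fix $w\in\operatorname*{dom}S$, pick $\varepsilon_n\downarrow0$, and run the same greedy selection. Either the process terminates (giving $\overline w$ with $\phi_{\overline w}=\phi(\overline w)$ directly, or $\overline w\notin\operatorname*{dom}S$, in which case $S(\overline w)=\emptyset$ and the conclusion is vacuous), or it produces a strictly $\preccurlyeq$--decreasing sequence $(w_n)_{n\ge0}$ with
\[
\phi_{w_n}\le\phi_{w_{n+1}}\le\phi(w_{n+1})<\phi_{w_n}+\varepsilon_{n+1}\qquad\forall n\ge0 .
\]
From this chain one reads off exactly the three features demanded by (AQ): $(w_n)$ is strictly $\preccurlyeq$--decreasing, $(\phi(w_n))$ is strictly decreasing, and $0\le\phi(w_{n+1})-\inf\{\phi(v)\mid v\preccurlyeq w_n\}=\phi(w_{n+1})-\phi_{w_n}<\varepsilon_{n+1}\to 0$. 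Hence (AQ) applies and yields $\overline w\in W$ with $\overline w\preccurlyeq w_n$ for all $n$; then $S(\overline w)\cup\{\overline w\}\subset S(w_n)\subset S(w_0)$, and the squeeze $\phi_{w_n}\le\phi(w')\le\phi(w_n)$ for $w'\in S(\overline w)\cup\{\overline w\}$ together with $\phi_{w_n},\phi(w_n)\to\gamma$ gives $\phi(w')=\phi(\overline w)=\gamma$ for all $w'\in S(\overline w)$. This proves (i); assertion (ii) then follows exactly as before, by applying (i) to the partial order $\preceq_\phi$ of Lemma \ref{lem-fi} (which still satisfies (AQ) since any $\preceq_\phi$--decreasing sequence of the relevant type is $\preceq$--decreasing of the relevant type and $\phi$ is strictly $\preceq_\phi$--increasing), or more directly by noting that when $\phi$ is strictly $\preceq$--increasing any $w'\in S(\overline w)\setminus\{\overline w\}$ would force $\phi(w')<\phi(\overline w)$, contradicting $\phi(w')=\phi(\overline w)$.

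The only real subtlety — and the step I would be most careful about — is making sure that (AQ) is being applied to a sequence that genuinely meets \emph{all} its clauses, in particular the strict decrease of $(\phi(w_n))$ (which is why the greedy step imposes $\phi(w_{n+1})<\phi(w_n)$ and not merely $\phi(w_{n+1})<\phi_{w_n}+\varepsilon_{n+1}$) and the precise form of the gap term $\inf\{\phi(v)\mid v\preccurlyeq w_n\}$, which is literally $\phi_{w_n}$ in the earlier notation; one should also handle cleanly the degenerate branches where the process stops, including $w_n\notin\operatorname*{dom}S$. There is no new hard analysis here: Theorem \ref{r-Qiu0} is exactly the statement that the direct proof of Theorem \ref{ex-BB} never used the full strength of (Ab), so the proof is simply a citation of that argument with (Ab') replaced by (AQ) at its single point of use.
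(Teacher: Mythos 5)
Your proposal is correct and is exactly the paper's argument: the authors prove Theorem \ref{r-Qiu0} precisely by remarking that the alternative (direct) proof of Theorem \ref{ex-BB}~(i) invokes (Ab') only for the greedily constructed sequence, which is strictly $\preccurlyeq$--decreasing with $(\phi(w_n))$ strictly decreasing and gap term $\phi(w_{n+1})-\phi_{w_n}<\varepsilon_{n+1}\to 0$, so (AQ) suffices. Your verification of the (AQ) clauses and your handling of the degenerate branches and of assertion (ii) match the paper's (terser) treatment.
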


\begin{remark}
\label{r-Qiu1} {\rm (a) Theorem 5.1 in \cite{Car:03} follows from
the first part of Theorem \ref{r-Qiu0} applying it for the reversed
order and $\phi :=-S$, because in \cite{Car:03} it is assumed that
$\preccurlyeq$ is a partial order and any increasing sequence
$(x_{n})_{n\geq1}$ with $(S(x_{n}))_{n\geq 1}$ strictly increasing
is bounded above. }

{\rm (b) Theorems 2.1 in \cite{Qiu:14}, \cite{Qiu:14b} and
\cite{Qiu:16b} follow applying Theorem \ref{r-Qiu0} for
$W:=S(x_{1})$ with $x_{1}\in S(x_{0})\backslash\{x_{0}\}$ such that
$\eta(x_{1})<\infty$ and $\phi :=\eta|_{W}$ because it is assumed
that every strictly $\preccurlyeq
$--decreasing sequence $(w_{n})_{n\geq1}\subset W$ with $\phi(w_{n+1}%
)-\inf\{\phi(v)\mid v\preccurlyeq w_{n}\}\rightarrow0$ is minorized. More
precisely, \cite[Th.~2.1]{Qiu:14} and \cite[Th.~2.1]{Qiu:16b} follow directly
using the second part of Theorem \ref{r-Qiu0} because $\phi=\eta|_{W}$ is
strictly increasing. In the case of \cite[Th.~2.1]{Qiu:14b}, using the first
part of Theorem \ref{r-Qiu0}, one obtains some $\overline{w}\in W$ such that
$\phi(w^{\prime})=\phi(\overline{w})$ for $w^{\prime}\in S(\overline{w})$.
Because, by condition (B), for $z_{1},z_{2}\in S(x)$ with $x\in S(x_{0})$ we
have that $\min\{\eta(z_{1}),\eta(z_{2})\}<\eta(x)$, $S(\overline{w})$
contains at most one element $\widetilde{w}$. If $S(\overline{w}%
)\subset\{\overline{w}\}$ we take $\widehat{w}:=\overline{w}$. In the contrary
case $\widetilde{w}\neq\overline{w}$ and $S(\overline{w})=\{\widetilde{w}\}$,
whence $S(\widetilde{w})\subset S(\overline{w})=\{\widetilde{w}\}$. Taking
$\widehat{w}:=\widetilde{w}$, the conclusion of \cite[Th.~2.1]{Qiu:14b}
holds.}
\end{remark}

As O. C\^{a}rj\u{a} mentions in \cite[p.~120]{Car:03}, the use of strict
monotonicity of the sequence $\left(  S(x_{n})\right)  $ in the condition
corresponding to (Ab) of the usual version of the BB-principle is due to
Corneliu Ursescu.

\smallskip

In the sequel $\preceq$ is a preorder if not stated explicitly otherwise.

\smallskip

Having the metric space $(X,d)$, the nonempty set $Z$, and $\mathcal{A}$ a
nonempty subset of $X\times Z$ preordered by $\preceq$, consider the conditions:

\begin{description}
\item[(C0)] $\forall\left(  (x_{n},z_{n})\right)  _{n\geq1}\subset\mathcal{A}$
$\preceq$--decreasing : $(x_{n})_{n\geq1}$ is Cauchy and $\exists
(x,z)\in\mathcal{A}$ such that $(x,z)\preceq(x_{n},z_{n})$ $\forall n\geq1$.

\item[(C'0)] $\forall\left(  (x_{n},z_{n})\right)  _{n\geq1}\subset
\mathcal{A}$ $\preceq$--decreasing : $\exists(x,z)\in\mathcal{A}$ such that
$x_{n}\rightarrow x$ and $(x,z)\preceq(x_{n},z_{n})$ $\forall n\geq1$.

\item[(Ca)] $\forall\left(  (x_{n},z_{n})\right)  _{n\geq1}\subset\mathcal{A}$
$\preceq$--decreasing : $(x_{n})_{n\geq1}$ is Cauchy.

\item[(Cb)] $\forall\left(  (x_{n},z_{n})\right)  _{n\geq1}\subset\mathcal{A}$
$\preceq$--decreasing : $\exists(x,z)\in\mathcal{A}$ such that $(x,z)\preceq
(x_{n},z_{n})$ $\forall n\geq1$.

\item[(C1)] $\forall\left(  (x_{n},z_{n})\right)  _{n\geq1}\subset\mathcal{A}$
$\preceq$--decreasing with $(x_{n})_{n\geq1}$ Cauchy : $\exists(x,z)\in
\mathcal{A}$ such that $(x,z)\preceq(x_{n},z_{n})$ $\forall n\geq1$.

\item[(C'1)] $\forall\left(  (x_{n},z_{n})\right)  _{n\geq1}\subset
\mathcal{A}$ $\preceq$--decreasing with $x_{n}\rightarrow x\in X$ : $\exists
z\in Z$ such that $(x,z)\in\mathcal{A}$ and $(x,z)\preceq(x_{n},z_{n})$
$\forall n\geq1$.

\item[(Ca1)] $\forall\left(  (x_{n},z_{n})\right)  _{n\geq1}\subset
\mathcal{A}$ $\preceq$--decreasing with $(x_{n})_{n\geq1}$ Cauchy : $(x_{n})$
is convergent.
\end{description}

Clearly, condition (Cb) is nothing else than condition (Ab) written for
$(\mathcal{A},\preceq)$. Furthermore, (Ca1) is automatically verified whenever
$(X,d)$ is complete.

When $X$ is a singleton $\{x_{0}\}$ the conditions above, excepting (Ca) and
(Ca1) which are automatically satisfied, reduce to (Ab) for $(\mathcal{A}
,\preceq)$, or, equivalently, for $(W,\preccurlyeq)$, where $W=\Pr
_{Z}(\mathcal{A}):=\{z \in Z \mid\exists x \in X \colon(x,z) \in A\}$ and
$w\preccurlyeq w^{\prime}$ if $(x_{0},w)\preceq(x_{0},w^{\prime})$ for
$w,w^{\prime}\in W$.

When $Z$ is a singleton $\{z_{0}\}$, the preorder $\preceq$ on $\mathcal{A}%
\subset X\times Z$ is uniquely determined by the preorder $\preccurlyeq$ on
$\Pr_{X}(\mathcal{A})$ defined by $x\preccurlyeq x^{\prime}$ if $(x,z_{0}%
)\preceq(x^{\prime},z_{0})$. So, when $Z$ is a singleton, the conditions above
reduce to the following ones on the preordered metric space $(X,d,\preccurlyeq
)$ (replacing $X$ by $\Pr_{X}(\mathcal{A})$ if necessary):

\begin{description}
\item[(A0)] $\forall(x_{n})_{n\geq1}\subset X$ $\preccurlyeq$--decreasing :
$(x_{n})_{n\geq1}$ is Cauchy and $\exists x\in X$ such that $x\preccurlyeq
x_{n}$ $\forall n\geq1$.

\item[(A'0)] $\forall\left(  x_{n}\right)  _{n\geq1}\subset X$ $\preccurlyeq
$--decreasing : $\exists x\in X$ such that $x_{n}\rightarrow x$ and
$x\preccurlyeq x_{n}$ $\forall n\geq1$.

\item[(Aa)] $\forall\left(  x_{n}\right)  _{n\geq1}\subset X$ $\preccurlyeq
$--decreasing : $(x_{n})_{n\geq1}$ is Cauchy.

\item[(Ab)] $\forall\left(  x_{n}\right)  _{n\geq1}\subset X$ $\preccurlyeq
$--decreasing : $\exists x\in X$ such that $x\preccurlyeq x_{n}$ $\forall
n\geq1$.

\item[(A1)] $\forall\left(  x_{n}\right)  _{n\geq1}\subset X$ $\preccurlyeq
$--decreasing with $(x_{n})_{n\geq1}$ Cauchy : $\exists x\in X$ such that
$x\preccurlyeq x_{n}$ $\forall n\geq1$.

\item[(A'1)] $\forall\left(  x_{n}\right)  _{n\geq1}\subset X$ $\preccurlyeq
$--decreasing with $x_{n}\rightarrow x\in X$ : $x\preccurlyeq x_{n}$ $\forall
n\geq1$.

\item[(Aa1)] $\forall\left(  x_{n}\right)  _{n\geq1}\subset X$ $\preccurlyeq
$--decreasing with $(x_{n})_{n\geq1}$ Cauchy : $(x_{n})$ is convergent.
\end{description}

Taking into account Remark \ref{zv-rem7} below, condition (Aa) means that
$\preccurlyeq$ is regular in the sense of \cite[p.\ 68]{Ham:05}, while (A'1)
means that all lower sections of $X$ wrt $\preccurlyeq$ are $\preccurlyeq
$-lower closed. When $(X,d,\preccurlyeq)$ verifies (Aa1), in \cite[p.\ 67]%
{Ham:05} it is said that $(X,d)$ is $\preccurlyeq$--complete. Condition (Aa1)
is also related to dynamical completeness of $(X,d)$ in the sense of
\cite[Def.~3.1]{Qiu:14}.

\begin{proposition}
\label{rel-c}Among the conditions above we have the following relations:%
\begin{gather}
(\text{\emph{C0}})\Longleftrightarrow(\text{\emph{Ca}})\wedge(\text{\emph{Cb}%
})\Longleftrightarrow(\text{\emph{Ca}})\wedge(\text{\emph{C1}}),\label{r-c0}\\
(\text{\emph{Cb}})\Longrightarrow(\text{\emph{C1}}),\quad(\text{\emph{C'0}%
})\Longrightarrow(\text{\emph{C'1}}),\label{r-c01}\\
(\text{\emph{C'0}})\Longrightarrow(\text{\emph{C0}}),\quad(\text{\emph{C'1}%
})\wedge(\text{\emph{Ca1}})\Longrightarrow(\text{\emph{C1}}). \label{r-c0011}%
\end{gather}
In general, $($\emph{C'1}$)\not \Rightarrow ($\emph{C1}$)$, and the converse
implications in (\ref{r-c01}) and (\ref{r-c0011}) are not true, even if
$(X,d)$ is complete and $Z$ is a singleton.
\end{proposition}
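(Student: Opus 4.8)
I would separate the statement into the positive implications (which fall out of unwinding the definitions, the only non-formal ingredient being uniqueness of limits in a metric space) and the negative assertions (which need explicit examples, all with $Z$ a singleton).

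For the positive part I would go in the following order. First \eqref{r-c01}: (Cb) $\Rightarrow$ (C1) holds because (C1) is exactly (Cb) with its universal quantifier restricted to decreasing sequences whose $X$-component is Cauchy; and (C'0) $\Rightarrow$ (C'1) because, given a decreasing sequence with $x_n\to x\in X$, condition (C'0) supplies $(x',z)\in\mathcal{A}$ with $x_n\to x'$ and $(x',z)\preceq(x_n,z_n)$ for all $n$, and $x'=x$ since a metric space is separated. Next \eqref{r-c0}: the equivalence (C0) $\Leftrightarrow$ (Ca) $\wedge$ (Cb) is merely the observation that, for a decreasing sequence, the conclusion of (C0) is the conjunction of ``$(x_n)$ is Cauchy'' (the content of (Ca)) and ``there is a lower bound in $\mathcal{A}$'' (the content of (Cb)); and (Ca) $\wedge$ (Cb) $\Leftrightarrow$ (Ca) $\wedge$ (C1) follows since (Cb) $\Rightarrow$ (C1) in one direction, while in the other, under (Ca) every decreasing sequence is Cauchy, so (C1) applies to all of them and yields (Cb). Finally \eqref{r-c0011}: (C'0) $\Rightarrow$ (C0) because (C'0) already delivers a convergent (hence Cauchy) $X$-component together with a lower bound, and (C'1) $\wedge$ (Ca1) $\Rightarrow$ (C1) because a decreasing Cauchy sequence converges in $X$ by (Ca1), after which (C'1) furnishes the lower bound.

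For the negative part, since $Z$ is a singleton the preorder on $\mathcal{A}\cong X\times\{z_0\}$ is governed by a preorder $\preccurlyeq$ on $X$ and the conditions reduce to (A0)--(Aa1). I would use three examples. (i) $X=\mathbb{R}$ with its usual order: every decreasing (i.e.\ non-increasing) Cauchy sequence converges to its infimum, which is a lower bound, so (C1) and (C'1) hold, whereas $(-n)_{n\ge1}$ is decreasing, not Cauchy, and has no lower bound, so (Cb), (C0) and (C'0) all fail; this refutes the converses of both implications in \eqref{r-c01} with $X$ complete. (ii) $X=[0,1]$ with $x\preccurlyeq x'$ iff $x=x'$ or $x=0$ or $\bigl(x,x'\in(\tfrac{1}{2},1]$ and $x\le x'\bigr)$: one checks this is a preorder, that $0$ is a least element (so every sequence is bounded below), and that a $\preccurlyeq$-decreasing sequence is either eventually constant or non-increasing inside $(\tfrac{1}{2},1]$, hence Cauchy; as $[0,1]$ is complete, (C0) and (C1) hold. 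But $w_n=\tfrac{1}{2}+\tfrac{1}{n+1}$ is $\preccurlyeq$-decreasing with $w_n\to\tfrac{1}{2}\in X$, while $\tfrac{1}{2}\not\preccurlyeq w_n$ (because $\tfrac{1}{2}\notin(\tfrac{1}{2},1]$), so no admissible ``limit which is a lower bound'' exists and (C'0), (C'1) both fail; this refutes (C0) $\Rightarrow$ (C'0) and (C1) $\Rightarrow$ (C'1) $\wedge$ (Ca1) with $X$ complete. (iii) $X=(0,1]$ (incomplete) with the usual order: $(1/n)_{n\ge1}$ is decreasing and Cauchy with no lower bound in $X$, so (C1) fails, whereas any decreasing sequence converging in $X$ converges to its infimum, a lower bound, so (C'1) holds; this gives (C'1) $\not\Rightarrow$ (C1) in general.

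The main obstacle is example (ii): the other two use nothing but the usual order on an interval, but here one must engineer a preorder on a complete space in which decreasing sequences are automatically Cauchy and bounded below (forcing (C0) and (C1)), yet some decreasing sequence has its limit disqualified as a lower bound of that sequence (breaking (C'0) and (C'1)). Making $0$ a global minimum secures boundedness, and restricting genuine comparability to the half-open window $(\tfrac{1}{2},1]$, which excludes the would-be limit $\tfrac{1}{2}$, is what kills the primed conditions; the routine checks there are the transitivity of $\preccurlyeq$ and the classification of its decreasing sequences.
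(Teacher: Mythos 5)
Your proposal is correct, and its overall architecture coincides with the paper's: the positive implications in \eqref{r-c0}--\eqref{r-c0011} are dispatched by unwinding the definitions (the only substantive points being uniqueness of limits for (C'0)$\Rightarrow$(C'1) and the observation that under (Ca) the Cauchy restriction in (C1) becomes vacuous, so (C1) upgrades to (Cb)), and the negative assertions are handled by three counterexamples with $Z$ a singleton, reducing everything to the conditions (A0)--(Aa1). Your example (iii) is literally the paper's Example \ref{ex-c}~(a), namely $X=(0,1]$ with the usual metric and order. Where you diverge is in the other two counterexamples. For the converses of \eqref{r-c0011} the paper uses $X=\{-1\}\cup[0,1]$ with an order that makes $-1$ a global minorant while disqualifying the limit $0$ of $(1/n)_{n\ge 1}$ as a lower bound of that sequence; your $[0,1]$ with comparability confined to the window $(\tfrac12,1]$ plus a bottom element $0$ achieves exactly the same effect, and your verifications (transitivity, classification of $\preccurlyeq$--decreasing sequences, failure of (A'1) at the limit $\tfrac12$) are sound. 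For the converses of \eqref{r-c01} the paper uses the discrete metric on $[0,1]$ with the usual order; note, however, that in that example $0$ minorizes every decreasing sequence, so (Ab) holds there and the example only refutes (C'1)$\Rightarrow$(C'0), not (C1)$\Rightarrow$(Cb). Your choice of $X=\mathbb{R}$ with the sequence $(-n)_{n\ge 1}$ kills (Cb) outright while keeping (C1) and (C'1), so it refutes both converses of \eqref{r-c01} simultaneously and is, on that particular point, the more complete argument.
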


\begin{proof}
The implications in (\ref{r-c0}), (\ref{r-c01}) and (\ref{r-c0011}) are almost
obvious. The fact that the implication $($C'1$)\Rightarrow($C1$)$ in general
is not true when $(X,d)$ is not complete is shown in Example \ref{ex-c}~(a).
The fact that the converse implications in (\ref{r-c01}) and (\ref{r-c0011})
are not true in general follows from Example \ref{ex-c}~(b) and (c), respectively.
\end{proof}

\begin{example}
\label{ex-c} \emph{(a)} Consider $X:={}]0,1]$ endowed with the metric $d$
defined by $d(x,x^{\prime}):=\left\vert x-x^{\prime}\right\vert $ and the
order $\preccurlyeq$ defined by $x\preccurlyeq x^{\prime}$ if $x^{\prime}%
-x\in\mathbb{R}_{+}$. Take $(x_{n})_{\geq1}\subset X$ a $\preccurlyeq
$--decreasing sequence with $x_{n}\rightarrow x\in X$. On one hand, since the
metric and order on $X$ are induced by the usual metric and order of
$\mathbb{R}$, we have that $x\leq x_{n}$ for $n\geq1$, and so (A'1) holds. On
the other hand, the sequence $(1/n)_{n\geq1}\subset X$ is Cauchy and
$\preccurlyeq$--decreasing, but assuming that there exists $x\in X$ such that
$x\preccurlyeq1/n$ (thus $x\leq1/n$) for $n\geq1$, we get the contradiction
$x\leq0$. Hence (A1) does not hold.

\emph{(b)} Consider $X:=[0,1]\subset\mathbb{R}$, $d:X\times X\rightarrow
\mathbb{R}$ defined by $d(x,x^{\prime}):=0$ for $x=x^{\prime}$, $d(x,x^{\prime
}):=1$ for $x\neq x^{\prime}$, and $x\preccurlyeq x^{\prime}$ if $x^{\prime
}-x\in\mathbb{R}_{+}$ for $x,x^{\prime}\in X;$ clearly, $(X,d)$ is a complete
metric space. It is obvious that (A1) and (A'1) hold because $(x_{n})_{n\geq
1}\subset X$ is Cauchy (convergent) iff $(x_{n})_{n\geq1}$ is constant for
large $n$, but neither (A0), nor (A'0), holds; just take the sequence
$(x_{n})_{n\geq1}:=(1/n)_{n\geq1}\subset X$.

\emph{(c)} Consider $X:=\{-1\}\cup\lbrack0,1]\subset\mathbb{R}$, $d:X\times
X\rightarrow\mathbb{R}$ defined by $d(x,x^{\prime}):=\left\vert x-x^{\prime
}\right\vert $, and $x\preccurlyeq x^{\prime}$ if either $x^{\prime}=0$, or
$x,x^{\prime}\in X\backslash\{0\}$ and $x^{\prime}-x\in\mathbb{R}_{+}$ for
$x,x^{\prime}\in X;$ clearly, $(X,d)$ is a complete metric space. Because
$-1\preccurlyeq x$ for every $x\in X$, (A1) clearly holds. Also (A0) holds.
Indeed, take $(x_{n})_{n\geq1}\subset X$ a $\preccurlyeq$--decreasing
sequence. As observed above, $-1\preccurlyeq x_{n}$ for every $n\geq1$. If
$x_{n_{0}}\neq0$ for some $n_{0}\geq1$, then $(x_{n})_{n\geq n_{0}}\subset
X\backslash\{0\}$, and so $(x_{n})_{n\geq n_{0}}$ as a sequence in
$\mathbb{R}$ with its usual order is decreasing and bounded from below by
$-1$, and so it is convergent to an element in $X$ (hence $(x_{n})_{n\geq1}$
is Cauchy in $(X,d)$). In the contrary case, $x_{n}=0$ for $n\geq1$, and so
$(x_{n})_{n\geq1}$ is Cauchy. Moreover, (A'1) and (A'0) do not hold because
$(1/n)_{n\geq1}\subset X$ is $\preccurlyeq$--decreasing and converges to $0$,
but $0\not \preccurlyeq 1/n$ for every $n\geq1$.
\end{example}

Using Theorem \ref{ex-eBB} we get the next result; it is close to
\cite[Th.\ 21]{Ham:05}, and is the prototype of the results in this paper,
some of them being direct consequences of it.

\begin{theorem}
\label{t-hz}Let $(X,d)$ be a metric space, $Z$ a nonempty set, and let
$\mathcal{A}\subset X\times Z$ be a nonempty set preordered by $\preceq$ which
verifies \emph{(C0)}. Then for every $(x,z)\in\mathcal{A}$ there exists
$(\overline{x},\overline{z})\in\mathcal{A}$ such that $(\overline{x}%
,\overline{z})\preceq(x,z)$ and $\mathcal{A}\ni(x^{\prime},z^{\prime}%
)\preceq(\overline{x},\overline{z})$ $\Rightarrow$ $x^{\prime}=\overline{x}$.
\end{theorem}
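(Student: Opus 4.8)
The plan is to invoke the extended Brezis--Browder principle (Theorem \ref{ex-eBB}) for the preordered set $(\mathcal{A},\preceq)$, fed with a suitable $\preceq$--increasing function. First I would record that, by Proposition \ref{rel-c}, \emph{(C0)} is equivalent to \emph{(Ca)} $\wedge$ \emph{(Cb)}; here \emph{(Cb)} is precisely \emph{(Ab)} for $(\mathcal{A},\preceq)$, hence available for Theorem \ref{ex-eBB}, while \emph{(Ca)} --- every $\preceq$--decreasing sequence in $\mathcal{A}$ is $d$--Cauchy --- will be used at the very end.

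The function I would use is the diameter of the projected lower section. For $(x,z)\in\mathcal{A}$ set $S_X(x,z):=\{x'\in X\mid\exists\,z'\in Z,\ (x',z')\in\mathcal{A},\ (x',z')\preceq(x,z)\}$, the projection onto $X$ of $S_{\preceq}(x,z)$, and define $\phi:\mathcal{A}\to[0,+\infty]$ by $\phi(x,z):=\sup\{d(x',x'')\mid x',x''\in S_X(x,z)\}$. Since $(x_1,z_1)\preceq(x_2,z_2)$ implies $S_{\preceq}(x_1,z_1)\subseteq S_{\preceq}(x_2,z_2)$, hence $S_X(x_1,z_1)\subseteq S_X(x_2,z_2)$, the map $\phi$ is $\preceq$--increasing. (The more naive choice $\sup\{d(x,x')\mid(x',z')\preceq(x,z)\}$ is \emph{not} $\preceq$--increasing, because the basepoint moves; using the diameter is precisely what repairs this.) As $\operatorname*{dom}S_{\preceq}=\mathcal{A}$ and \emph{(Ab)} holds, Theorem \ref{ex-eBB}~(i) yields, for the prescribed $(x,z)$, some $(\overline x,\overline z)\in\mathcal{A}$ with $(\overline x,\overline z)\preceq(x,z)$ and $\phi(x',z')=\phi(\overline x,\overline z)=:R$ for every $(x',z')\in\mathcal{A}$ with $(x',z')\preceq(\overline x,\overline z)$.

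It then remains to prove $R=0$: since $\overline x\in S_X(\overline x,\overline z)$ and that set has diameter $0$, we get $S_X(\overline x,\overline z)=\{\overline x\}$, i.e. $(x',z')\preceq(\overline x,\overline z)\Rightarrow x'=\overline x$, which is exactly the assertion. Suppose $R>0$ and fix $r:=\min\{R,1\}/2\in(0,+\infty)$, so $0<r<R$. I would construct inductively a $\preceq$--decreasing sequence with $(x_0,z_0):=(\overline x,\overline z)$: given $(x_n,z_n)\preceq(\overline x,\overline z)$ one has $\phi(x_n,z_n)=R>r$, so there exist $a,b\in S_X(x_n,z_n)$ with $d(a,b)>r$; by the triangle inequality $\max\{d(x_n,a),d(x_n,b)\}>r/2$, say $d(x_n,a)>r/2$, and, choosing $z_{n+1}$ with $(a,z_{n+1})\in\mathcal{A}$ and $(a,z_{n+1})\preceq(x_n,z_n)$, set $x_{n+1}:=a$; then $(x_{n+1},z_{n+1})\preceq(\overline x,\overline z)$ by transitivity. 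The resulting sequence is $\preceq$--decreasing with $d(x_n,x_{n+1})>r/2$ for all $n$, contradicting \emph{(Ca)}, which forces $d(x_n,x_{n+1})\to0$. Hence $R=0$, and the proof is complete.

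The only genuinely delicate point is the choice of $\phi$: it must be the diameter of the (projected) lower section rather than the obvious ``longest downward step'', so that it is honestly $\preceq$--increasing; once this is in place, \emph{(C0)} does exactly what is needed, its \emph{(Cb)} half powering the Brezis--Browder step and its \emph{(Ca)} half ruling out a positive diameter at the selected point. (Alternatively, one can bypass Theorem \ref{ex-eBB} and build the minimizing sequence by hand, at each stage taking a downward step more than half as long as the current section's radius and invoking \emph{(C0)} at the end; this is the ``alternative proof'' alluded to after the statement.)
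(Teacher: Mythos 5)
Your proof is correct and follows essentially the same route as the paper's own argument: apply the (extended) Brezis--Browder principle to $\phi(x,z)=\operatorname{diam}P_X\bigl(S(x,z)\bigr)$, which is $\preceq$--increasing by nestedness of lower sections, and then use the Cauchy half of \emph{(C0)} to rule out a positive diameter at the selected point. The only cosmetic differences are that you route the boundedness hypothesis through Proposition \ref{rel-c} and extract the far-apart point via a triangle-inequality step on a pair $a,b$, whereas the paper measures distances directly from the current basepoint; both are the same idea.
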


\begin{proof}
For $(x,z)\in\mathcal{A}$ set $S(x,z):=\{(x^{\prime},z^{\prime})\in
\mathcal{A}\mid(x^{\prime},z^{\prime})\preceq(x,z)\}$; moreover, consider
\[
\phi:\mathcal{A}\rightarrow\overline{\mathbb{R}},\quad\phi
(x,z):=\operatorname*{diam}P_{X}\left(  S(x,z)\right)  .
\]
Since $S(x^{\prime},z^{\prime})\subset S(x,z)$ when $(x,z)$, $(x^{\prime
},z^{\prime})\in\mathcal{A}$ with $(x^{\prime},z^{\prime})\preceq(x,z)$,
$\phi$ is $\preceq$--increasing. Because of condition (Ab), the hypotheses of
Theorem \ref{ex-eBB} are verified.

Take $(x,z)\in\mathcal{A}$. Using Theorem \ref{ex-eBB} we get $(\overline
{x},\overline{z})\in\mathcal{A}$ such that $(\overline{x},\overline{z}%
)\preceq(x,z)$ and $\mathcal{A}\ni(x^{\prime},z^{\prime})\preceq(\overline
{x},\overline{z})$ $\Rightarrow$ $\phi(x^{\prime},z^{\prime})=\phi
(\overline{x},\overline{z})=:\alpha\in\lbrack0,\infty]$. We claim that
$\alpha=0$, and so the conclusion holds.

Assume that $\alpha>0$ and take $0<\beta<\tfrac{1}{2}\alpha$. Then there
exists $(x_{1},z_{1})\in S(\overline{x},\overline{z})$ such that
$d(x_{1},\overline{x})>\beta$; else, $d(x^{\prime},\overline{x})\leq\beta$ for
$(x^{\prime},z^{\prime})\in S(\overline{x},\overline{z})$, and so
$\phi(\overline{x},\overline{z})=\operatorname*{diam}\Pr_{X}\left(
S(\overline{x},\overline{z})\right)  \leq2\beta<\alpha$, a contradiction.
Since $(x_{1},z_{1})\preceq(\overline{x},\overline{z})$, $\operatorname*{diam}
\Pr_{X}\left(  S(x_{1},z_{1})\right)  =\phi(x_{1},z_{1})=\alpha$. As before,
we find $(x_{2},z_{2})\preceq(x_{1},z_{1})$ $(\preceq(\overline{x}%
,\overline{z}))$ with $d(x_{2},x_{1})>\beta$. Continuing in this way we get
the $\preceq$--decreasing sequence $\left(  (x_{n},z_{n})\right)  _{n\geq
1}\subset\mathcal{A}$ such that $d(x_{n+1},x_{n})>\beta$ for all $n\geq1$. By
(C0), $(x_{n})_{n\geq1}$ is Cauchy, and so we get the contradiction
$\beta<d(x_{n+1},x_{n})\rightarrow0$. Hence $\alpha=0$, and so $(x^{\prime
},z^{\prime})\in S(\overline{x},\overline{z})$ implies $x^{\prime}%
=\overline{x}$.
\end{proof}

\medskip

\textit{Alternative proof of Theorem \ref{t-hz}} (without using BB-principle).

Fix $(x,z)\in\mathcal{A}$; with the notation above, we have to show that there
exists $(\bar x, \bar z)\in S(x,z)$ with $P_{X}\big(S(\bar x, \bar
z)\big)=\{\bar x\} $.

Starting with $(x_{1}, z_{1}):=(x,z) \in\mathcal{A}$, define a sequence
$\big((x_{n}, z_{n})\big)_{n\ge1} \subseteq\mathcal{A}$ by picking $x_{n+1}
\in P_{X}\big(S(x_{n}, z_{n})\big)$ satisfying
\begin{equation}
d(x_{n+1}, x_{n}) \geq\min\big(1,\sup\big\{d(x, x_{n})\mid x \in
P_{X}\big(S(x_{n}, z_{n})\big)\big\} - {1}/{n}\big), \label{ah2}%
\end{equation}
and $z_{n+1} \in Z$ such that $(x_{n+1}, z_{n+1}) \in S(x_{n},z_{n})$.

Since $\big((x_{n},z_{n})\big)_{n\geq1}$ is $\preceq$--decreasing, by (C0),
the sequence $(x_{n})_{n\geq1}$ is Cauchy and there exists $(\bar{x},\bar
{z})\in\mathcal{A}$ such that $(\bar{x},\bar{z})\preceq(x_{n},z_{n})$ for
$n\geq1$; in particular $(\bar{x},\bar{z})\preceq(x_{1},z_{1})=(x,z)$. It
follows that $d(x_{n},x_{n+1})\rightarrow0$, and so $d(x_{n},x_{n+1})<1$ for
large $n$. These together with (\ref{ah2}) imply that $\operatorname{diam}%
P_{X}\big(S(x_{n},z_{n})\big)\rightarrow0$. Since $S(\bar{x},\bar{z})\subset
S(x_{n},z_{n})$ for $n\geq1$, we obtain that $\operatorname{diam}%
P_{X}\big(S(\bar{x},\bar{z})\big)=0$. Hence $P_{X}\big(S(\bar{x},\bar
{z})\big)=\{\bar{x}\}$ because $(\bar{x},\bar{z})\in S(\bar{x},\bar{z})$. The
proof is complete.

\begin{lemma}
\label{lem2}Let $(X,d)$ be a metric space, $Z$ a nonempty set, and
$\mathcal{A}\subset X\times Z$ a nonempty set preordered by $\preceq$. Then
condition \emph{(Ca)} holds if and only if for every $\preceq$--decreasing
sequence $((x_{n},z_{n}))_{n\geq1}\subseteq\mathcal{A}$ the sequence $(x_{n})$
is asymptotic, that is $d(x_{n},x_{n+1})\rightarrow0$.
\end{lemma}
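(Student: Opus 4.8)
The plan is to prove the equivalence in Lemma \ref{lem2} directly from the definitions, since condition (Ca) says that every $\preceq$--decreasing sequence $((x_n,z_n))_{n\ge 1}\subseteq\mathcal{A}$ has the property that $(x_n)_{n\ge 1}$ is Cauchy in $(X,d)$, while the stated alternative is that $(x_n)_{n\ge 1}$ is merely \emph{asymptotic}, i.e.\ $d(x_n,x_{n+1})\to 0$. The forward implication ``(Ca) $\Rightarrow$ asymptotic'' is immediate: a Cauchy sequence is always asymptotic, and this holds for \emph{every} $\preceq$--decreasing sequence, so nothing is needed here beyond the trivial observation that $d(x_n,x_{n+1})\to 0$ whenever $(x_n)$ is Cauchy.

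The substance is the converse. So assume that for every $\preceq$--decreasing sequence the associated $X$-component is asymptotic, and let $((x_n,z_n))_{n\ge 1}\subseteq\mathcal{A}$ be $\preceq$--decreasing; I must show $(x_n)$ is Cauchy. Suppose not. Then there is $\varepsilon>0$ and indices $m_1<k_1<m_2<k_2<\cdots$ with $d(x_{m_j},x_{k_j})\ge\varepsilon$ for all $j$ (using that failure of Cauchy gives, for some $\varepsilon>0$, arbitrarily late pairs that are $\varepsilon$-apart, and one extracts them so the blocks $[m_j,k_j]$ are disjoint and increasing). The idea is to build a \emph{new} $\preceq$--decreasing sequence out of the ``jump'' subsequence in such a way that consecutive terms do not shrink together, thereby contradicting the hypothesis. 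Concretely, I would consider the sequence obtained by listing, in order, the terms $(x_{m_1},z_{m_1}),(x_{k_1},z_{k_1}),(x_{m_2},z_{m_2}),(x_{k_2},z_{k_2}),\dots$; since the original sequence is $\preceq$--decreasing and $\preceq$ is transitive, any subsequence of it (in index order) is again $\preceq$--decreasing, so this is a legitimate $\preceq$--decreasing sequence in $\mathcal{A}$. By hypothesis its $X$-component must be asymptotic, but it contains infinitely many consecutive pairs $(x_{m_j},x_{k_j})$ with $d(x_{m_j},x_{k_j})\ge\varepsilon$, so $d$ of consecutive terms does not tend to $0$ — contradiction. Hence $(x_n)$ is Cauchy, and (Ca) holds.

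The main (and only mild) obstacle is the bookkeeping in extracting the jump subsequence: one must be careful that the pairs $\{m_j,k_j\}$ can be chosen with $m_1<k_1<m_2<k_2<\cdots$ strictly increasing so that, when the terms are relisted, the pair $(x_{m_j},x_{k_j})$ really is a pair of \emph{consecutive} entries of the new sequence. This is routine: negating ``$(x_n)$ is Cauchy'' gives some $\varepsilon>0$ such that for every $N$ there exist $p,q\ge N$ with $d(x_p,x_q)\ge\varepsilon$; one then inductively chooses $N_1=1$, picks $m_1<k_1$ with $d(x_{m_1},x_{k_1})\ge\varepsilon$, sets $N_2=k_1+1$, picks $m_2<k_2\ge N_2$ with $d(x_{m_2},x_{k_2})\ge\varepsilon$, and so on. One should also note that the relabelled sequence is indeed indexed by $n\ge 1$ and lies in $\mathcal{A}$, both of which are clear. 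No completeness of $(X,d)$ and no properties of $Z$ beyond nonemptiness are used, which matches the generality of the statement.

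\begin{proof}
If \emph{(Ca)} holds and $((x_n,z_n))_{n\ge 1}\subseteq\mathcal{A}$ is $\preceq$--decreasing, then $(x_n)_{n\ge 1}$ is Cauchy, hence $d(x_n,x_{n+1})\to 0$; thus the $X$-component of every $\preceq$--decreasing sequence is asymptotic.

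Conversely, assume the $X$-component of every $\preceq$--decreasing sequence in $\mathcal{A}$ is asymptotic, and let $((x_n,z_n))_{n\ge 1}\subseteq\mathcal{A}$ be $\preceq$--decreasing. Suppose, for contradiction, that $(x_n)_{n\ge 1}$ is not Cauchy. Then there exists $\varepsilon>0$ such that for every $N\ge 1$ there are indices $p,q\ge N$ with $d(x_p,x_q)\ge\varepsilon$. Construct inductively indices $m_1<k_1<m_2<k_2<\cdots$ as follows: choose $m_1<k_1$ with $d(x_{m_1},x_{k_1})\ge\varepsilon$; having chosen $m_j<k_j$, apply the above with $N:=k_j+1$ to get $m_{j+1},k_{j+1}\ge k_j+1$ with $m_{j+1}<k_{j+1}$ and $d(x_{m_{j+1}},x_{k_{j+1}})\ge\varepsilon$. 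Now define $(y_n,w_n)_{n\ge 1}\subseteq\mathcal{A}$ by listing the pairs
\[
(x_{m_1},z_{m_1}),\ (x_{k_1},z_{k_1}),\ (x_{m_2},z_{m_2}),\ (x_{k_2},z_{k_2}),\ \dots,
\]
that is, $(y_{2j-1},w_{2j-1}):=(x_{m_j},z_{m_j})$ and $(y_{2j},w_{2j}):=(x_{k_j},z_{k_j})$ for $j\ge 1$. Since $m_1<k_1<m_2<k_2<\cdots$, this is a subsequence (in index order) of the $\preceq$--decreasing sequence $((x_n,z_n))_{n\ge 1}$; by transitivity of $\preceq$ it is itself $\preceq$--decreasing. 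By hypothesis, $(y_n)_{n\ge 1}$ is asymptotic, i.e.\ $d(y_n,y_{n+1})\to 0$. But $d(y_{2j-1},y_{2j})=d(x_{m_j},x_{k_j})\ge\varepsilon$ for all $j\ge 1$, contradicting $d(y_n,y_{n+1})\to 0$. Hence $(x_n)_{n\ge 1}$ is Cauchy, and \emph{(Ca)} holds.
\end{proof}
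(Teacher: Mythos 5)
Your proof is correct and follows essentially the same route as the paper: negate Cauchyness, extract a subsequence (still $\preceq$--decreasing by transitivity) whose consecutive terms stay $\varepsilon$--apart infinitely often, and contradict the asymptotic hypothesis. The only cosmetic difference is that you interleave pairs $(m_j,k_j)$ so that only the odd-indexed gaps are large, whereas the paper extracts a single chain $(n_l)$ with every consecutive gap at least $\delta$; both work equally well.
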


\begin{proof}
Of course, if $((x_{n},z_{n}))_{n\geq1}\subseteq\mathcal{A}$ verifies
condition (Ca) then $d(x_{n},x_{n+1})\rightarrow0$. Conversely, consider the
$\preceq$--decreasing sequence $((x_{n},z_{n}))_{n\geq1}\subseteq\mathcal{A}$
with $d(x_{n},x_{n+1})\rightarrow0$. Suppose that $(x_{n})$ is not Cauchy.
Then there exist $\delta>0$ and a strictly increasing sequence $(n_{l}%
)_{l\geq1}\subseteq\mathbb{N}^{\ast}$ such that $d(x_{n_{l+1}},x_{n_{l}}%
)\geq\delta$ for all $l\geq1$. Since $\preceq$ is transitive, $((x_{n_{l}%
},z_{n_{l}}))_{l\geq1}$ is $\preceq$--decreasing, and so $(x_{n_{l}})_{l\geq
1}$ is asymptotic, that is $d(x_{n_{l+1}},x_{n_{l}})\rightarrow0$. This
obvious contradiction ends the proof.
\end{proof}

\medskip

Note that one must add that $(X,d)$ is complete in \cite[Th.~21]{Ham:05}.

Indeed, take $X:=Y:=\mathbb{R}_{+}\backslash\{0\}$, $X$ endowed with the usual
metric and $X\times Y$ endowed with the order defined by $(x,y)\preceq
(x^{\prime},y^{\prime}):\iff\lbrack x\leq x^{\prime}$, $y\leq y^{\prime}]$;
take also $M:=\{(x,x)\mid x>0\}$. Clearly the hypothesis of \cite[Th.~21]%
{Ham:05} is satisfied, but its conclusion does not hold.

\medskip

Taking into account (\ref{r-c0011}), the next result is an obvious consequence
of Theorem \ref{t-hz}; it is stated in \cite[Th.\ 23]{Ham:05} under the
supplementary assumption that $(X,d)$ is complete.

\begin{corollary}
\label{c-hamel}Let $(X,d)$ be a metric space, $Z$ a nonempty set, and let
$\emptyset\neq\mathcal{A}\subset X\times Z$ be preordered by $\preceq$. Assume
that $(\mathcal{A}, \preceq)$ verifies condition \emph{(C'0)}. Then the
conclusion of Theorem \ref{t-hz} holds.
\end{corollary}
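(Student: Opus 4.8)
The plan is to deduce the corollary directly from Theorem~\ref{t-hz} by checking that condition \emph{(C'0)} already forces condition \emph{(C0)}, which is exactly the hypothesis under which Theorem~\ref{t-hz} has been proved. Thus the entire argument reduces to one implication among the conditions of Proposition~\ref{rel-c} followed by a citation of a theorem established above; no completeness assumption on $(X,d)$ is needed, in contrast with \cite[Th.~23]{Ham:05}.

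First I would record the implication that \emph{(C'0)} implies \emph{(C0)}. This is precisely the first assertion of (\ref{r-c0011}) in Proposition~\ref{rel-c}, so strictly speaking there is nothing left to do. For a self-contained check one argues as follows: let $\big((x_n,z_n)\big)_{n\ge1}\subseteq\mathcal{A}$ be $\preceq$--decreasing. By \emph{(C'0)} there exists $(x,z)\in\mathcal{A}$ with $x_n\to x$ and $(x,z)\preceq(x_n,z_n)$ for every $n\ge1$. A convergent sequence in a metric space is Cauchy, hence $(x_n)_{n\ge1}$ is Cauchy; together with the existence of the lower bound $(x,z)\in\mathcal{A}$ this is exactly what \emph{(C0)} requires. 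Equivalently, \emph{(C'0)} yields both \emph{(Ca)} and \emph{(Cb)}, and by (\ref{r-c0}) the conjunction of \emph{(Ca)} and \emph{(Cb)} is equivalent to \emph{(C0)}.

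Then I would invoke Theorem~\ref{t-hz} applied to $(\mathcal{A},\preceq)$: since \emph{(C0)} holds, for every $(x,z)\in\mathcal{A}$ there exists $(\overline{x},\overline{z})\in\mathcal{A}$ with $(\overline{x},\overline{z})\preceq(x,z)$ and such that $\mathcal{A}\ni(x',z')\preceq(\overline{x},\overline{z})$ implies $x'=\overline{x}$. This is verbatim the conclusion of Theorem~\ref{t-hz}, so the corollary follows.

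I expect no genuine obstacle here. The only point requiring attention is that \emph{(C'0)} already asserts that the limit $x$, together with a suitable $z$, lies in $\mathcal{A}$ and is a $\preceq$--lower bound of the whole sequence, so one does not need to separately establish that the limit belongs to $\mathcal{A}$, nor any closure property of lower sections; all the substantive work has been carried out in Proposition~\ref{rel-c} and in the proof of Theorem~\ref{t-hz}.
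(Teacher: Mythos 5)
Your proposal is correct and follows exactly the paper's route: the paper derives Corollary~\ref{c-hamel} by invoking the implication $(\text{C'0})\Rightarrow(\text{C0})$ from (\ref{r-c0011}) and then applying Theorem~\ref{t-hz}. Your self-contained verification that a convergent sequence is Cauchy, so that (C'0) yields both (Ca) and (Cb), is just an expansion of that same one-line argument.
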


Another consequence of Theorem \ref{t-hz} is the following result.

\begin{corollary}
\label{zv-tnew}Let $(X,d)$ be a metric space, preordered by $\preccurlyeq$,
for which \emph{(A0)} holds. Then for every $x\in X$ there exists
$\overline{x}\in X$ such that $\overline{x}\preccurlyeq x$ and $X\ni
x^{\prime}\preccurlyeq\overline{x}$ $\Rightarrow$ $x^{\prime}=\overline{x}$.
\end{corollary}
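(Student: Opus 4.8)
The plan is to deduce Corollary \ref{zv-tnew} directly from Theorem \ref{t-hz} by taking $Z$ to be a singleton, say $Z:=\{z_{0}\}$, and $\mathcal{A}:=X\times\{z_{0}\}$ with the preorder $\preceq$ defined by $(x,z_{0})\preceq(x',z_{0}):\iff x'\preccurlyeq x$. Wait — I must be careful about the direction: in Theorem \ref{t-hz} the conclusion produces $(\bar x,\bar z)\preceq(x,z)$, and in the Corollary we want $\bar x\preccurlyeq x$, so the identification should be $(x,z_{0})\preceq(x',z_{0}):\iff x\preccurlyeq x'$, matching the discussion in the paragraph preceding the list (A0)--(Aa1) where $x\preccurlyeq x'$ corresponds to $(x,z_{0})\preceq(x',z_{0})$.

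The key step is then to verify that condition (C0) for $(\mathcal{A},\preceq)$ is exactly condition (A0) for $(X,d,\preccurlyeq)$. First I would note that a $\preceq$--decreasing sequence $((x_{n},z_{0}))_{n\geq1}\subset\mathcal{A}$ corresponds bijectively to a $\preccurlyeq$--decreasing sequence $(x_{n})_{n\geq1}\subset X$, since $z_{n}=z_{0}$ is forced. Then (C0) says: $(x_{n})_{n\geq1}$ is Cauchy and there exists $(x,z_{0})\in\mathcal{A}$ with $(x,z_{0})\preceq(x_{n},z_{0})$ for all $n$, i.e.\ $x\in X$ with $x\preccurlyeq x_{n}$ for all $n$ — which is precisely (A0). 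This is the routine verification already essentially carried out in the text just before the statement, so it poses no real obstacle.

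Finally I would apply Theorem \ref{t-hz}: for every $(x,z_{0})\in\mathcal{A}$, equivalently every $x\in X$, there is $(\bar x,\bar z)\in\mathcal{A}$, necessarily $\bar z=z_{0}$, with $(\bar x,z_{0})\preceq(x,z_{0})$ (i.e.\ $\bar x\preccurlyeq x$) and such that $\mathcal{A}\ni(x',z')\preceq(\bar x,z_{0})\Rightarrow x'=\bar x$. Translating back, $x'\preccurlyeq\bar x$ (with $x'\in X$) implies $x'=\bar x$, which is exactly the conclusion of Corollary \ref{zv-tnew}. The only thing to keep an eye on is the consistency of the order direction between the two-component setting and the one-component setting; once that bookkeeping is fixed as above, the proof is a one-line application with no genuine difficulty.

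\begin{proof}
Let $z_{0}$ be an arbitrary point, put $Z:=\{z_{0}\}$ and $\mathcal{A}:=X\times\{z_{0}\}$, and define $\preceq$ on $\mathcal{A}$ by $(x,z_{0})\preceq(x',z_{0}):\iff x\preccurlyeq x'$. Then $\preceq$ is a preorder on $\mathcal{A}$, and a sequence $((x_{n},z_{n}))_{n\geq1}\subset\mathcal{A}$ is $\preceq$--decreasing if and only if $z_{n}=z_{0}$ for all $n$ and $(x_{n})_{n\geq1}\subset X$ is $\preccurlyeq$--decreasing. Moreover, for $(x,z_{0})\in\mathcal{A}$, the relation $(x,z_{0})\preceq(x_{n},z_{0})$ for all $n\geq1$ means $x\preccurlyeq x_{n}$ for all $n\geq1$. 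Hence condition (C0) for $(\mathcal{A},\preceq)$ is exactly condition (A0) for $(X,d,\preccurlyeq)$, which holds by hypothesis. Applying Theorem \ref{t-hz}, for every $x\in X$ there exists $(\overline{x},z_{0})\in\mathcal{A}$ with $(\overline{x},z_{0})\preceq(x,z_{0})$, i.e.\ $\overline{x}\preccurlyeq x$, and such that $\mathcal{A}\ni(x',z_{0})\preceq(\overline{x},z_{0})\Rightarrow x'=\overline{x}$, i.e.\ $X\ni x'\preccurlyeq\overline{x}\Rightarrow x'=\overline{x}$.
\end{proof}
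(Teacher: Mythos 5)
Your proof is correct and follows exactly the paper's own argument: embed $X$ as $X\times\{z_{0}\}$, transfer the preorder, observe that (C0) reduces to (A0) when $Z$ is a singleton, and apply Theorem \ref{t-hz}. The order-direction bookkeeping you worry about is handled correctly.
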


\begin{proof}
Take $\mathcal{A}:=X\times\{0\}$ and set $(x,0)\preceq(x^{\prime},0)$ if
$x,x^{\prime}\in X$ and $x\preccurlyeq x^{\prime}$. Clearly $\preceq$ is a
preorder on $\mathcal{A}$ and condition (C0) in Theorem \ref{t-hz} is
verified. Applying Theorem \ref{t-hz} we get the conclusion.
\end{proof}

\medskip

From the preceding result we get immediately the next one formulated by Liu
and Ng in \cite[Lem.~2.2]{LiuNg11}.

\begin{corollary}
\label{zv-c-liu-ng}Let $(X,d)$ be a metric space, preordered by $\preccurlyeq
$, for which \emph{(A'0)} holds. Then for every $x\in X$ there exists
$\overline{x}\in X$ such that $\overline{x}\preccurlyeq x$ and $X\ni
x^{\prime}\preccurlyeq\overline{x}$ $\Rightarrow$ $x^{\prime}=\overline{x}$.
\end{corollary}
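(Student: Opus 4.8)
The plan is to derive Corollary \ref{zv-c-liu-ng} from Corollary \ref{zv-tnew} by showing that condition \emph{(A'0)} implies condition \emph{(A0)}; then the conclusion follows verbatim. By Proposition \ref{rel-c}, more precisely the first implication in \eqref{r-c0011}, we already know $(\text{C'0})\Rightarrow(\text{C0})$, and since \emph{(A0)}, \emph{(A'0)} are exactly \emph{(C0)}, \emph{(C'0)} written out for the case when $Z$ is a singleton (with $\mathcal{A}=X\times\{z_{0}\}$ and $\preccurlyeq$ on $\Pr_{X}(\mathcal{A})$), this is immediate. So the proof is a two-line reduction.

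Concretely, first I would fix $x\in X$. Assume \emph{(A'0)} holds for $(X,d,\preccurlyeq)$. Let $(x_{n})_{n\geq1}\subset X$ be an arbitrary $\preccurlyeq$--decreasing sequence; by \emph{(A'0)} there is $\tilde{x}\in X$ with $x_{n}\to\tilde{x}$ and $\tilde{x}\preccurlyeq x_{n}$ for all $n\geq1$. In particular $(x_{n})_{n\geq1}$ is convergent, hence Cauchy, and $\tilde{x}$ is a $\preccurlyeq$--minorant of the sequence; thus \emph{(A0)} holds. Now apply Corollary \ref{zv-tnew}: for the given $x$ there exists $\overline{x}\in X$ with $\overline{x}\preccurlyeq x$ and $X\ni x^{\prime}\preccurlyeq\overline{x}\Rightarrow x^{\prime}=\overline{x}$, which is exactly the desired conclusion.

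There is essentially no obstacle here; the only thing to be careful about is making the translation between the $\mathcal{A}\subset X\times Z$ formulation and the pure metric-space formulation explicit, i.e.\ recalling (as the text does just before the list of conditions \emph{(A0)}--\emph{(Aa1)}) that when $Z=\{z_{0}\}$ the preorder on $\mathcal{A}$ is determined by the preorder $\preccurlyeq$ on $\Pr_{X}(\mathcal{A})$, so that \emph{(A'0)}$\Rightarrow$\emph{(A0)} is the specialization of \eqref{r-c0011}. If one prefers to avoid invoking Proposition \ref{rel-c}, the direct argument above (convergent $\Rightarrow$ Cauchy) is self-contained and just as short. Either way the corollary is an immediate consequence of Corollary \ref{zv-tnew}.

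\begin{proof}
By the first implication in \eqref{r-c0011} (for $Z$ a singleton), condition \emph{(A'0)} implies condition \emph{(A0)}: indeed, if $(x_{n})_{n\geq1}\subset X$ is $\preccurlyeq$--decreasing, then by \emph{(A'0)} there is $x\in X$ with $x_{n}\rightarrow x$ and $x\preccurlyeq x_{n}$ for all $n\geq1$; in particular $(x_{n})_{n\geq1}$ is convergent, hence Cauchy, so \emph{(A0)} holds. The conclusion now follows from Corollary \ref{zv-tnew}.
\end{proof}
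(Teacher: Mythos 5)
Your proposal is correct and matches the paper's intent exactly: the paper states that Corollary \ref{zv-c-liu-ng} follows "immediately" from Corollary \ref{zv-tnew}, the point being precisely that (A'0) implies (A0) since a convergent sequence is Cauchy (equivalently, the singleton-$Z$ case of the implication (C'0)$\Rightarrow$(C0) in (\ref{r-c0011})). Nothing further is needed.
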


\begin{remark}
\label{r-znew}{\rm Corollary \ref{zv-tnew} is slightly more general
than Corollary \ref{zv-c-liu-ng}. To see this take
$X:=\{-1\}\cup\{1/n\mid n\in\mathbb{N}\backslash\{0\}\}$ endowed
with the metric $d$ defined by $d(x,x^{\prime}):=\left\vert
x-x^{\prime}\right\vert $ and the order
$\preccurlyeq$ defined by $x\preccurlyeq x^{\prime}$ if $x^{\prime}%
-x\in\mathbb{R}_{+}$. Clearly the sequence $(1/n)_{n\geq1}\subset X$ is
$\preccurlyeq$--decreasing, but not convergent.}
\end{remark}

As observed in \cite{LiuNg11}, Corollary \ref{zv-c-liu-ng} is a reformulation
of the next result stated by Hamel and Tammer in \cite[Th.~2.2]{HamTam08};
this result was stated previously, in equivalent forms, by Turinici in
\cite[Th.~1]{Tur:81} and by Hamel in \cite[Th.~16]{Ham:05}.

\begin{corollary}
\label{zv-t-turinici}Let $(X,d)$ be a metric space, preordered by
$\preccurlyeq$ such that $(X,d)$ is $\preccurlyeq$--complete, that is every
$\preccurlyeq$--decreasing Cauchy sequence $(x_{n})_{n\geq1}\subseteq X$ is
convergent. Assume that

\emph{(i)} $S(x):=\{x^{\prime}\in X\mid x^{\prime}\preccurlyeq x\}$ is
$\preccurlyeq$--lower closed for every $x\in X$, that is for every
$\preccurlyeq$--decreasing sequence $(x_{n})_{n\geq1}\subseteq S(x)$ with
$x_{n}\rightarrow u$ one has $u\in S(x)$, and

\emph{(ii)} any $\preccurlyeq$--decreasing sequence $(x_{n})_{n\geq1}\subseteq
X$ is asymptotic.

Then for every $x\in X$ there exists $\overline{x}\in X$ such that
$\overline{x}\preccurlyeq x$ and $S(\overline{x})=\{\overline{x}\}$.
\end{corollary}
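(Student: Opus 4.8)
The plan is to deduce Corollary \ref{zv-t-turinici} from Corollary \ref{zv-tnew} by showing that the hypotheses (i) and (ii), together with $\preccurlyeq$--completeness of $(X,d)$, imply condition \textbf{(A0)}. So let $(x_n)_{n\ge1}\subseteq X$ be an arbitrary $\preccurlyeq$--decreasing sequence; I must produce a Cauchy sequence and an element $x\in X$ with $x\preccurlyeq x_n$ for all $n$. First, by hypothesis (ii) the sequence $(x_n)$ is asymptotic, i.e. $d(x_n,x_{n+1})\to0$; invoking Lemma \ref{lem2} (with $Z$ a singleton, $\mathcal A = \Pr_X(\mathcal A)\times\{z_0\}$, so that (Ca) becomes (Aa)) — or arguing directly exactly as in the proof of Lemma \ref{lem2} — this upgrades to: $(x_n)$ is Cauchy. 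Then by $\preccurlyeq$--completeness of $(X,d)$, the Cauchy $\preccurlyeq$--decreasing sequence $(x_n)$ converges to some $u\in X$.

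The remaining point is to check $u\preccurlyeq x_n$ for every $n$. Fix $n\ge1$. The tail $(x_m)_{m\ge n}$ is a $\preccurlyeq$--decreasing sequence lying in $S(x_n)$ (since $x_m\preccurlyeq x_n$ for $m\ge n$ by transitivity, and $x_n\preccurlyeq x_n$ by reflexivity), and $x_m\to u$. Hypothesis (i) — lower closedness of $S(x_n)$ — then gives $u\in S(x_n)$, i.e. $u\preccurlyeq x_n$. Since $n$ was arbitrary, $u$ is the desired lower bound, and \textbf{(A0)} is verified. Applying Corollary \ref{zv-tnew} yields, for every $x\in X$, some $\overline x\preccurlyeq x$ with $X\ni x'\preccurlyeq\overline x\Rightarrow x'=\overline x$, which is precisely $S(\overline x)=\{\overline x\}$ (note $\overline x\in S(\overline x)$ by reflexivity).

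I do not expect any serious obstacle here; this is a routine packaging argument. The only mildly delicate point is the asymptotic-implies-Cauchy step, but that is precisely Lemma \ref{lem2}, proved just above, so I would simply cite it (after noting that in the present single-$Z$ setting (Ca) specializes to (Aa)). One could alternatively reprove the two lines of that contradiction argument inline, but referencing Lemma \ref{lem2} keeps the proof short. I would therefore write the proof as: ``By (ii) and Lemma \ref{lem2}, every $\preccurlyeq$--decreasing sequence in $X$ is Cauchy; by $\preccurlyeq$--completeness it converges, say $x_n\to u$; by (i) applied to each $S(x_n)$ we get $u\preccurlyeq x_n$ for all $n$. Hence \textbf{(A0)} holds and Corollary \ref{zv-tnew} applies.''
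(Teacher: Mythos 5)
Your proof is correct and follows essentially the paper's own route: the paper also obtains this corollary by translating hypotheses (i) and (ii) into conditions (A$'$1) and (Aa) (see Remark \ref{zv-rem7}, which uses Lemma \ref{lem2} exactly as you do), combining them with $\preccurlyeq$--completeness to get (A$'$0)/(A0), and then invoking Corollary \ref{zv-c-liu-ng}, itself an immediate consequence of Corollary \ref{zv-tnew}. Your direct verification of (A0) and appeal to Corollary \ref{zv-tnew} is the same argument with one intermediate label removed.
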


\begin{remark}
\label{zv-rem7}{\rm Conditions (i) and (ii) in Corollary
\ref{zv-t-turinici} are equivalent to (A'1) and (Aa), respectively.}
\end{remark}

Indeed, assume that (i) in Corollary \ref{zv-t-turinici} holds and take
$(x_{n})_{n\geq1}\subseteq X$ a $\preccurlyeq$--decreasing sequence with
$x_{n}\rightarrow x\in X$. Since $(x_{n})_{n\geq p}\subseteq S(x_{p})$ and
$S(x_{p})$ is $\preccurlyeq$--lower closed, we have that $x\in S(x_{p})$, and
so $x\preccurlyeq x_{p}$ for every $p$. Conversely, assume that (A'1) holds
and take $(x_{n})_{n\geq1}\subseteq S(x)$ a $\preccurlyeq$--decreasing
sequence with $x_{n}\rightarrow u$. Then $u\preccurlyeq x_{1}\preccurlyeq x$,
whence $u\in S(x)$. The equivalence of (ii) and (Aa) follows immediately from
Lemma \ref{lem2}.

\medskip

The equivalence of (ii) and (Aa) from the previous remark is established in
\cite[Prop.\ 41]{Ham:05}.

Note that in Corollary \ref{zv-t-turinici} (and Corollary \ref{zv-c-liu-ng})
$\preccurlyeq$ is in fact anti-symmetric (as observed in \cite[Prop.\ 40]%
{Ham:05} and \cite[Prop.\ 2.1]{HamTam08}). Indeed, take $x$, $x^{\prime}\in X$
with $x\preccurlyeq x^{\prime}$ and $x^{\prime}\preccurlyeq x$. Then the
sequence $(x_{n})_{n\geq1}$ defined by $x_{2n}:=x$ and $x_{2n-1}:=x^{\prime}$
is $\preccurlyeq$--decreasing; by (ii) we get $d(x,x^{\prime})=d(x_{n}%
,x_{n+1})\rightarrow0$, and so $x=x^{\prime}$. \medskip

Note also that Corollary \ref{zv-t-turinici} is slightly more general than the
Dancs--Heged\"{u}s--Medvegyev Theorem (see \cite[Th. 3.1]{DanHegMed83}), in
which $(X,d)$ is assumed to be complete instead of being $\preccurlyeq
$--complete and $S(x)$ is assumed to be closed instead of being $\preccurlyeq$--closed.

\medskip

The next result is very easy to prove, so we omit its proof.

\begin{proposition}
\label{p-z1}Let $(Z,\preceq)$ be a preordered set. For any $A_{1}$, $A_{2}%
\in2^{Z}$ let us set%
\begin{equation}
A_{1}\preceq^{l}A_{2}~:\iff~\left[  \forall z_{2}\in A_{2},\ \exists z_{1}\in
A_{1}:z_{1}\preceq z_{2}\right]  . \label{r-leset}%
\end{equation}

Then $\preceq^{l}$ is a preorder on $2^{Z}$. Moreover, for $z_{1},z_{2}\in Z$,
and $A$, $A_{1}$, $A_{2}\in2^{Z}$ we have that
\[
\{z_{1}\}\preceq^{l}\{z_{2}\}\Leftrightarrow z_{1}\preceq z_{2},\quad\quad
Z\preceq^{l}A\preceq^{l}\emptyset,\quad\quad\big[
A_{2}\neq\emptyset,~A_{1}\preceq^{l}A_{2}\big] \Rightarrow A_{1}\neq
\emptyset.
\]

\end{proposition}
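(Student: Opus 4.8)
The statement asserts that $\preceq^{l}$ defined by \eqref{r-leset} is a preorder on $2^{Z}$ and records three elementary consequences. The plan is to verify reflexivity and transitivity directly from the definition, then read off the three auxiliary facts. For reflexivity of $\preceq^{l}$, given $A\in2^{Z}$ and $z_{2}\in A$, take $z_{1}:=z_{2}$; since $\preceq$ is reflexive on $Z$ we have $z_{1}\preceq z_{2}$, so $A\preceq^{l}A$. For transitivity, suppose $A_{1}\preceq^{l}A_{2}$ and $A_{2}\preceq^{l}A_{3}$, and let $z_{3}\in A_{3}$. By the second relation there is $z_{2}\in A_{2}$ with $z_{2}\preceq z_{3}$; applying the first relation to this $z_{2}$ yields $z_{1}\in A_{1}$ with $z_{1}\preceq z_{2}$, and transitivity of $\preceq$ on $Z$ gives $z_{1}\preceq z_{3}$. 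As $z_{3}\in A_{3}$ was arbitrary, $A_{1}\preceq^{l}A_{3}$.

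For the three additional assertions: the equivalence $\{z_{1}\}\preceq^{l}\{z_{2}\}\Leftrightarrow z_{1}\preceq z_{2}$ is immediate since the only element of $A_{2}=\{z_{2}\}$ is $z_{2}$ and the only available witness in $A_{1}=\{z_{1}\}$ is $z_{1}$. For $Z\preceq^{l}A$: given any $z_{2}\in A\subseteq Z$, take $z_{1}:=z_{2}\in Z$; for $A\preceq^{l}\emptyset$: the condition in \eqref{r-leset} is vacuously true since $A_{2}=\emptyset$ has no elements (matching the convention $\emptyset+A=\emptyset$-style handling of the empty set already in force). Finally, if $A_{2}\neq\emptyset$ and $A_{1}\preceq^{l}A_{2}$, pick any $z_{2}\in A_{2}$; then \eqref{r-leset} furnishes some $z_{1}\in A_{1}$, so $A_{1}\neq\emptyset$.

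There is essentially no obstacle here — every step is a one-line unwinding of the definition together with reflexivity or transitivity of $\preceq$ on $Z$. The only point requiring a moment's care is the vacuous-truth reading of $A\preceq^{l}\emptyset$, but this is standard. Accordingly, as the paper itself remarks, the proof may simply be omitted; if one wished to include it, the paragraphs above suffice essentially verbatim.
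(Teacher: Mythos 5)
Your proof is correct and is exactly the routine unwinding of the definition that the paper has in mind when it says the proof is "very easy" and omits it: reflexivity and transitivity of $\preceq^{l}$ follow from those of $\preceq$, and the three auxiliary facts (including the vacuous-truth case $A\preceq^{l}\emptyset$) are handled properly. Nothing to add.
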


In this generality, the preorder $\preceq^{l}$ was probably
discussed for the first time by Hamel in \cite[(2.6)]{Ham:05}, at
least in the context of variational analysis/optimization. However,
there are many precursors: the interested reader is referred to
\cite{HamEtAl:15} for a more thorough survey and detailed
references.

An important example of the above construction, already discussed in
\cite{KurTanHa:97} on a topological vector space ordered by a convex cone with
non-empty interior, is the following: Let $Y$ be a real vector space and
$K\subset Y$ a convex cone. Recall that the preorder $\leq_{K}$ determined by
$K$ on $Y$ is defined by $y_{1}\leq_{K}y_{2}$ $:\iff$ $y_{2}-y_{1}\in K$. The
preorder on $2^{Y}$ corresponding to $\leq_{K}$ using the definition in
(\ref{r-leset}) is the one given by $A_{1}\leq_{K}^{l}A_{2}$ $:\iff$
$A_{2}\subset A_{1}+K$ (see Section \ref{sec-vz-10.1}). Of course, $\leq_{K}$
is a partial order on $Y$ if and only if $K$ is pointed (that is
$K\cap(-K)=\{0\}$), but $\leq_{K}^{l}$ is a partial order if and only if
$K=\{0\}$. Indeed, $\{0\}\leq_{K}^{l}K\leq_{K}^{l}\{0\}$.

Another application of Theorem \ref{ex-eBB} is the following result involving
the set relation $\preceq^{l}$.

\begin{corollary}
\label{p-z2}Let $(Z,\preceq)$ be a preordered set, and $\phi:(Z,\preceq
)\rightarrow\overline{\mathbb{R}}$ a $\preceq$--increasing function. Consider
the preorder $\preceq^{l}$ on $2^{Z}$ defined in (\ref{r-leset}). For $A_{1}$,
$A_{2}\in2^{Z}$ let us set%
\begin{equation}
A_{1}\preceq_{\phi}^{l}A_{2}~:\iff~\big[A_{1}=A_{2}\text{ or }[A_{1}%
\preceq^{l}A_{2}\text{ and }\inf\phi(A_{1})<\inf\phi(A_{2})]\big].
\label{r-lesetfi}%
\end{equation}
Then $\preceq_{\phi}^{l}$ is a partial order on $2^{Z}$. Furthermore, let
$\mathcal{A}\subset2^{Z}$ be a nonempty set such that $(\mathcal{A}%
,\preceq^{l})$ verifies condition \emph{(Ab).} Then for every $A\in
\mathcal{A}$ there exists a minimal set $\overline{A}\in\mathcal{A}$ wrt
$\preceq_{\phi}^{l}$ such that $\overline{A}\preceq_{\phi}^{l}A;$ moreover,
$\mathcal{A}\ni A^{\prime}\preceq^{l}\overline{A}$ implies $\inf\phi
(A^{\prime})=\inf\phi(\overline{A})$.
\end{corollary}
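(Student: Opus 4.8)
The strategy is to reduce everything to the extended Brezis--Browder principle (Theorem \ref{ex-eBB}) exactly as in the proof of Lemma \ref{lem-fi}. First I would check that $\preceq_{\phi}^{l}$ is a partial order on $2^{Z}$: this is the special case $W=2^{Z}$, $\preceq\ ={}\preceq^{l}$, and the increasing function $w\mapsto\inf\phi(w)$ of the construction $\preccurlyeq_{\phi}$ from \eqref{r-lefi}. Indeed, since $\phi$ is $\preceq$--increasing, $A_{1}\preceq^{l}A_{2}$ implies $\inf\phi(A_{1})\le\inf\phi(A_{2})$ (every $z_{2}\in A_{2}$ has some $z_{1}\in A_{1}$ with $\phi(z_{1})\le\phi(z_{2})$, so $\inf\phi(A_{1})\le\phi(z_{2})$ for all such $z_{2}$, hence $\inf\phi(A_{1})\le\inf\phi(A_{2})$); thus $\Phi:=\inf\phi(\cdot)$ is $\preceq^{l}$--increasing and Lemma \ref{lem-fi}(i) gives that $\preceq_{\phi}^{l}=(\preceq^{l})_{\Phi}$ is a partial order.

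Next, for the existence part, fix $A\in\mathcal{A}$. We are given that $(\mathcal{A},\preceq^{l})$ verifies (Ab); by Lemma \ref{lem-fi}(iii), $(\mathcal{A},\preceq_{\phi}^{l})$ verifies (Ab) as well, and $\Phi$ is strictly $\preceq_{\phi}^{l}$--increasing by Lemma \ref{lem-fi}(i). Now I would apply the second part of \cite[Cor.~1]{BreBro:76} (the version used in the proof of Theorem \ref{ex-BB}), or equivalently Theorem \ref{ex-eBB} directly to the transitive relation $\preceq^{l}$ on $W=\mathcal{A}$ with the increasing function $\Phi$: since $A\in\operatorname{dom}S_{\preceq^{l}}$ (as $A\preceq^{l}A$), there is $\overline{A}\in\mathcal{A}$ with $\overline{A}\preceq^{l}A$ (hence $\overline A\preceq_\phi^l A$ after noting the trivial case $\overline A=A$) such that $\Phi(A')=\Phi(\overline{A})$ for all $A'\in\mathcal{A}$ with $A'\preceq^{l}\overline{A}$; this is precisely the assertion $\inf\phi(A')=\inf\phi(\overline{A})$. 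Alternatively, one can obtain $\overline A$ as a $\preceq_{\phi}^{l}$--minimal element of $\mathcal A$ with $\overline A\preceq_\phi^l A$ (using that $(\mathcal A,\preceq_\phi^l)$ is a partial order satisfying (Ab), via the reversed-order form of \cite[Cor.~1]{BreBro:76}) and then invoke Lemma \ref{lem-fi}(ii) to conclude $\Phi(A')=\Phi(\overline A)$ whenever $A'\preceq^{l}\overline A$. Either route delivers both the minimality of $\overline A$ wrt $\preceq_{\phi}^{l}$ and the claimed equality of infima.

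There is essentially no serious obstacle here; the only point that needs a moment's care is the monotonicity inference $A_{1}\preceq^{l}A_{2}\Rightarrow\inf\phi(A_{1})\le\inf\phi(A_{2})$, which uses the definition \eqref{r-leset} in the right direction (witnesses live in the smaller set $A_{1}$, bounded above by values in $A_{2}$), together with the convention $\inf\emptyset=+\infty$ so that the implication $[A_{2}\ne\emptyset,\ A_{1}\preceq^{l}A_{2}]\Rightarrow A_{1}\ne\emptyset$ from Proposition \ref{p-z1} keeps the degenerate cases consistent. After that, the result is just the pairing of Lemma \ref{lem-fi} with Theorem \ref{ex-eBB}, so the proof is short — this is why the authors can reasonably call it "another application of Theorem \ref{ex-eBB}."
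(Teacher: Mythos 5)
Your proposal is correct and follows essentially the same route as the paper: define $\Phi(A):=\inf\phi(A)$, verify it is $\preceq^{l}$--increasing, and feed the pair $(\preceq^{l},\Phi)$ into Lemma \ref{lem-fi} and the Brezis--Browder machinery. One small caution: of your two routes only the second one (obtain $\overline{A}$ as a $\preceq_{\phi}^{l}$--minimal element via \cite[Cor.~1]{BreBro:76} and then invoke Lemma \ref{lem-fi}(ii)) actually delivers the stated $\preceq_{\phi}^{l}$--minimality together with $\overline{A}\preceq_{\phi}^{l}A$, since the parenthetical inference $\overline{A}\preceq^{l}A\Rightarrow\overline{A}\preceq_{\phi}^{l}A$ in the first route fails when $\overline{A}\neq A$ and $\Phi(\overline{A})=\Phi(A)$.
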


\begin{proof}
Consider the mapping
\[
\phi^{\prime}:2^{Z}\rightarrow\overline{\mathbb{R}},\quad\phi^{\prime
}(A):=\inf\phi(A),
\]
where, as usual, $\inf\emptyset:=+\infty$. We claim that $\phi^{\prime}$ is
$\preceq^{l}$--increasing. Indeed, take $A_{1}$, $A_{2}\in2^{Z}$ such that
$A_{1}\preceq^{l}A_{2}$. From the definition of $\preceq^{l}$, for $z_{2}\in
A_{2}$ there exists $z_{1}\in A$ with $z_{1}\preceq z_{2}$, and so
$\phi^{\prime}(A_{1})\leq\phi(z_{1})\leq\phi(z_{2})$. Since $z_{2}\in A_{2}$
is arbitrary, we get $\phi^{\prime}(A_{1})\leq\Phi^{\prime}(A_{2})$. Using
Lemma \ref{lem-fi} for $W:=2^{Z}$, $\preccurlyeq:=\preceq^{l}$ and $\phi$
replaced by $\phi^{\prime}$, we obtain that $\preceq_{\phi}^{l}$ is a partial
order on $2^{Z}$; moreover, $\phi^{\prime}:(2^{Z},\preceq_{\phi}%
^{l})\rightarrow\overline{\mathbb{R}}$ is clearly strictly $\preceq_{\phi}%
^{l}$--increasing. The rest of the conclusion follows from Lemma \ref{lem-fi}
taking $E:=\mathcal{A}$.
\end{proof}

\medskip

Theorem \ref{ex-BB} (as well as Theorem \ref{ex-eBB}) shows the usefulness of
finding a monotone function on a preordered set for proving the existence of
minimal (maximal) elements. This will be the main procedure for getting
variants of the Ekeland variational principle (EVP) for vector and set-valued functions.


\section{A few technical notions and results}

Throughout this section $X$, $Y$, $K$, $H$ are as in Section \ref{sec-vz-10.1}
(if not stated otherwise explicitly), that is, $(X,d)$ is a metric space, $Y$
is a separated topological vector space, $K\subset Y$ is a proper convex cone,
and $H\subset K\backslash(-K)$ is a nonempty $K$--convex set. As seen in
Section \ref{sec-vz-10.1}, $\preceq_{H}:=\preceq_{F_{H}}$ is a preorder on
$X\times2^{Y}$. We also consider a set $\mathcal{A}\subseteq X\times2^{Y}$
such that
\[
Y_{\mathcal{A}}:=\bigcup\big\{A\mid A\in{\Pr}_{2^{Y}}(\mathcal{A}%
)\big\}\neq\emptyset
\]
where $\Pr_{2^{Y}}(\mathcal{A}) = \{B \subseteq Y \mid\exists x \in X
\colon(x, B) \in\mathcal{A}\}$.

Eventually, we are interested in obtaining results similar to the one in
Theorem \ref{t-hz}; this is because, when $Z:=\mathbb{R}$ endowed with the
usual order, having $f:X\rightarrow\overline{\mathbb{R}}$ a proper function
and $\mathcal{A}:=\{(x,F_{f}(x))\mid x\in\operatorname*{dom}f\}$ (where
$F_{f}(x):=\{f(x)\}$ for $x\in\operatorname*{dom}f$, $F_{f}(x):=\emptyset$ for
$x\in X\backslash\operatorname*{dom}f$) endowed with the preorder
$(x,\{f(x)\})\preceq(x^{\prime},\{f(x^{\prime})\})$ defined by $f(x^{\prime
})\geq f(x)+d(x,x^{\prime})$ (or equivalently $F_{f}(x^{\prime})\subset
F_{f}(x)+d(x,x^{\prime})+\mathbb{R}_{+}$) when $x,x^{\prime}\in
\operatorname*{dom}f$, the conclusion of Theorem \ref{t-hz} is saying that for
every $x_{0}\in\operatorname*{dom}f$ there exists $\overline{x}\in
\operatorname*{dom}f$ such that $f(\overline{x})+d(x_{0},\overline{x})\leq
f(x_{0})$ and $f(\overline{x})<f(x)+d(x,\overline{x})$ for $x\in
X\backslash\{\overline{x}\}$, that is, the conclusions of one of the usual
variants of the Ekeland variational principle (EVP for short). One of the
hypotheses of EVP is the lower boundedness of $f$; this is equivalent to each
of the following conditions:%
\begin{align*}
\exists B  &  \subset\mathbb{R}\text{ bounded : }F_{f}(X)\subset
B+\mathbb{R}_{+},\\
\exists B  &  \subset\mathbb{R}\text{ bounded, }\forall x\in X:B\not \subset
F_{f}(x)+\mathbb{R}_{+},\\
\exists a  &  >0,~\exists\alpha\in\mathbb{R},~\forall t\in F_{f}(X):\alpha\leq
at.
\end{align*}
In the $(\varepsilon,\lambda)$ variants of EVP (with $\varepsilon,\lambda>0$)
one has a fixed $x_{0}\in X$ with $f(x_{0})\leq\inf f+\varepsilon$. When using
$F_{f}$ this condition is very close to
\[
\forall x\in X:F_{f}(x_{0})\not \subset F_{f}(x)+\varepsilon+\mathbb{R}%
_{+}=F_{f}(x)+\varepsilon\cdot1+\mathbb{R}_{+}.
\]

The natural extensions of these conditions for a nonempty subset $\mathcal{A}$
of $X\times2^{Y}$ (in particular for $\Gamma:X\rightrightarrows Y$ with
$\operatorname*{dom}\Gamma\neq\emptyset$ and $\mathcal{A}:=\{(x,\Gamma(x))\mid
x\in\operatorname*{dom}\Gamma\}$) and the nonempty subset $H\subset K$ are:
\begin{align}
\exists B &  \subset Y\text{ bounded :}Y_{\mathcal{A}}\subset
B+K,\label{r-bd1}\\
\exists B &  \subset Y\text{ bounded, }\forall A\in Y_{\mathcal{A}%
}:B\not \subset A+K,\label{r-bd2}\\
\exists y^{\ast} &  \in K^{+}\backslash\{0\},~\exists\alpha\in\mathbb{R}%
,~\forall y\in Y_{\mathcal{A}}:\alpha\leq y^{\ast}(y),\label{r-bd3}\\
\forall A &  \in\Pr\nolimits_{2^{Y}}(\mathcal{A}):A_{0}\not \subset
A+\varepsilon H+K,\label{r-bd4}%
\end{align}
where $\varepsilon>0$ and $A_{0}\in\Pr\nolimits_{2^{Y}}(\mathcal{A})$. In the
literature, one finds several boundedness notions wrt the convex cone
$K\subset Y$. Having the set $E\subseteq Y$, $E$ is (vector) $K$--bounded from
below if there exists $b\in Y$ such that $E\subset b+K;$ $E$ is quasi
$K$--bounded from below if there exists a bounded set $B\subset Y$ such that
$E\subset B+K;$ $E$ is $K$--bounded if for every neighborhood $U$ of $0\in Y$
there exists $\lambda>0$ such that $E\subset\lambda U+K$; $E$ is $K^{+}%
${--bounded} from below if $y^{\ast}(E)$ is bounded from below for every
$y^{\ast}\in K^{+}$.

In the following, the \textquotedblleft from below" part of the corresponding
expression will be dropped if, but we will keep the \textquotedblleft(vector)"
part in the first notion since it refers to boundedness from below wrt the
vector preorder generated by $K$ and in order to avoid confusion with $K$--boundedness.

Among these boundedness notions the implications below hold:
\begin{align}
E\text{ is (vector) K\text{--bounded} } &  \Rightarrow E\text{ is quasi
}K\text{--bounded}\nonumber\\
&  \Rightarrow E\text{ is }K\text{--bounded }\nonumber\\
&  \Rightarrow E\text{ is }K^{+}\text{--bounded;}\label{r-bound}%
\end{align}
moreover, if $\operatorname*{int}K\neq\emptyset$ we have that $E$ is
$K$--bounded $\Rightarrow$ $E$ is (vector) $K$--bounded.

The first implication is obvious. The second implication is stated in
{\cite[Lem.~1.3.2]{Luc89b}; its converse implication is true when }$Y$ is
normable as seen in \cite[Lem.~1.3.2]{Luc89b}, too. For the third implication
consider $y^{\ast}\in K^{+}$ and take $U:=\{y\in Y\mid\left\langle y,y^{\ast
}\right\rangle \geq-1\}$. Because $U\in\mathcal{N}_{Y}$, there exists
$\lambda>0$ such that $E\subset\lambda U+K$, and so
\[
y^{\ast}(E)\subset\lambda y^{\ast}(U)+y^{\ast}(K)\subset\lambda\cdot
\lbrack-1,\infty)+[0,\infty)=[-\lambda,\infty).
\]
Assume now that $\operatorname*{int}K\neq\emptyset$ and $E$ is $K$--bounded.
Take $k^{0}\in\operatorname*{int}K$, that is $K-k^{0}\in\mathcal{N}_{Y}$. Then
there exists $\lambda>0$ such that $E\subset\lambda(K-k^{0})+K=(-\lambda
k^{0})+K$, whence $E$ is (vector) $K$--bounded.

\begin{example}
\label{ex-bd1}It is known that the mapping $\left\Vert \cdot\right\Vert _{p}$
defined by $\left\Vert f\right\Vert _{p}:=\left(  \int_{(0,1)}\left\vert
f\right\vert ^{p}d\lambda\right)  ^{1/p}$ (where $\lambda$ is the Lebesgue
measure) is a quasinorm on $Y:=L^{p}(0,1)$ for $p\in(0,1)$. So $Y$ is a
topological vector space; it is also known that $Y^{\ast}=\{0\}$, see \cite[p.
158, Eq.~(9)]{KoetheVol1:69}. Taking $K:=\{f\in L^{p}(0,1)\mid f\geq0$
a.e.$\}$ we have that $K$ is a closed convex cone with $K^{+}=\{0\}$, and so
$Y$ is $K^{+}$--bounded, but clearly, $Y$ is not $K$--bounded.
\end{example}

For a subset $E$ of $Y$ we denote by $\operatorname*{cl}_{seq}E$ the set of
those $y\in Y$ such that there exists $(y_{n})_{n\geq1}\subset E$ with
$y_{n}\rightarrow y$; we say that $E$ is sequentially closed (seq-closed for
short) if $E=\operatorname*{cl}_{seq}E$. Moreover, we say that $E$ is
sequentially compact (seq-compact for short) if any sequence from $E$ has a
subsequence converging to an element of $E$. Of course, if the topology of $Y$
is metrizable, then $\operatorname*{cl}_{seq}E=\operatorname*{cl}E$, and $E$
is seq-compact if and only if $E$ is compact.

In Proposition \ref{p-bd} and Example \ref{ex-bd} (below) we mention several
relations among conditions (\ref{r-bd1})--(\ref{r-bd4}). First we give a
preliminary result.

\begin{lemma}
\label{p-bd0}Let $E\subset Y$ be nonempty, and set $\widetilde{E}%
:=E\cap\left(  -\mathbb{P}H-K\right)  $, where $\mathbb{P}:=\,]0,\infty[$. If
one of the following conditions holds:

\emph{(i) (a) }either $\widetilde{E}$ is quasi $K$--bounded and $0\notin
\operatorname*{cl}_{seq}(H+K)$, \emph{or} $\widetilde{E}$ is $K$--bounded and
$0\notin\operatorname*{cl}(H+K)$;

\emph{(ii)} there exists $z^{\ast}\in K^{+}$ such that $\inf z^{\ast
}(\widetilde{E})>-\infty$ and $\inf z^{\ast}(H)>0$;

\emph{(iii)} $\widetilde{E}$ is $K^{+}$--bounded and (\ref{zv-f5b}) is satisfied,

\noindent then%
\begin{equation}
\exists\varepsilon>0:E\cap(-\varepsilon H-K)=\emptyset. \label{r-bd6}%
\end{equation}

Moreover, if $(x_{0},A_{0})\in\mathcal{A}\subset X\times2^{Y}$, $y_{0}\in
A_{0}$ and (\ref{r-bd6}) holds for ${E}:=(Y_{\mathcal{A}}-y_{0})$, then
(\ref{r-bd4}) holds with $\varepsilon$ provided by (\ref{r-bd6}).
\end{lemma}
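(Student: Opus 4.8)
The plan is to prove the three sufficient conditions (i)--(iii) separately, each time establishing the intermediate conclusion \eqref{r-bd6}, and then to deduce the final ``Moreover'' assertion by a short elementary computation. For the reduction step first: suppose \eqref{r-bd6} holds for $E := Y_{\mathcal{A}} - y_0$, with some $\varepsilon > 0$. If \eqref{r-bd4} failed, there would be $A \in \Pr_{2^Y}(\mathcal{A})$ with $A_0 \subseteq A + \varepsilon H + K$. Pick any $a \in A$; then $a + Y_{\mathcal{A}} \supseteq$ well, more carefully: $y_0 \in A_0 \subseteq A + \varepsilon H + K$, so $y_0 = a + \varepsilon h + k$ for some $a \in A \subseteq Y_{\mathcal{A}}$, $h \in H$, $k \in K$. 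Then $a - y_0 = -\varepsilon h - k \in -\varepsilon H - K$, while $a - y_0 \in Y_{\mathcal{A}} - y_0 = E$, contradicting $E \cap (-\varepsilon H - K) = \emptyset$. This is routine.

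For condition (ii): take the $z^\ast \in K^+$ provided, set $m := \inf z^\ast(\widetilde E) > -\infty$ and $c := \inf z^\ast(H) > 0$, and choose $\varepsilon > 0$ with $\varepsilon c > -m$ (possible since $c > 0$; if $m \ge 0$ any $\varepsilon$ works, if $m < 0$ take $\varepsilon > -m/c$). If some $y \in E \cap (-\varepsilon H - K)$, then $y \in \widetilde E$ (since $-\varepsilon H - K \subseteq -\mathbb{P}H - K$), so $z^\ast(y) \ge m$; but also $y = -\varepsilon h - k$ gives $z^\ast(y) = -\varepsilon z^\ast(h) - z^\ast(k) \le -\varepsilon c < m$, a contradiction. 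So \eqref{r-bd6} holds. For condition (i): each of the two boundedness hypotheses on $\widetilde E$, combined with the corresponding non-closure hypothesis on $H+K$, yields via Lemma \ref{zv-lem1}(ii) (in the locally convex case $0 \notin \operatorname{cl}(H+K) \iff F_H$ satisfies (F3) $\iff \exists z_H^\ast \in K^+$ with $\inf z_H^\ast(H) > 0$) a functional $z^\ast \in K^+$ with $\inf z^\ast(H) > 0$; quasi/$K$-boundedness of $\widetilde E$ then forces $\inf z^\ast(\widetilde E) > -\infty$ (a bounded set $B$ with $\widetilde E \subseteq B + K$ gives $z^\ast(\widetilde E) \subseteq z^\ast(B) + \mathbb{R}_+$, bounded below since $z^\ast$ is continuous and $B$ bounded), so we are reduced to case (ii). The subtlety is matching the \emph{sequential} closure version with quasi-$K$-boundedness versus the closure version with $K$-boundedness: for $0 \notin \operatorname{cl}_{seq}(H+K)$ one needs a separating \emph{sequentially continuous} argument, which is exactly the content of the first equivalence in Lemma \ref{zv-lem1}(ii); alternatively, if $\widetilde E$ is merely quasi-$K$-bounded one argues directly by extracting sequences — this is the step I expect to require the most care.

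For condition (iii): assume $\widetilde E$ is $K^+$-bounded and \eqref{zv-f5b} holds. Suppose \eqref{r-bd6} fails, i.e.\ for every $n \ge 1$ there is $y_n \in E \cap (-n^{-1} H - K)$; write $y_n = -n^{-1} h_n - k_n$ with $h_n \in H$, $k_n \in K$. By \eqref{zv-f5b} applied to $(h_n)$ there is $z^\ast \in K^+$ with $\limsup z^\ast(h_n) > 0$, say $z^\ast(h_{n_j}) \ge \delta > 0$ along a subsequence. Then $z^\ast(y_{n_j}) = -n_j^{-1} z^\ast(h_{n_j}) - z^\ast(k_{n_j}) \le -n_j^{-1}\delta$ if one can also control $z^\ast(k_{n_j}) \ge 0$ — which holds since $z^\ast \in K^+$. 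Hmm, that only gives $z^\ast(y_{n_j}) \le -n_j^{-1}\delta \to 0^-$, not a contradiction with $K^+$-boundedness directly. The fix: $K^+$-boundedness of $\widetilde E \ni y_n$ gives $\inf z^\ast(\widetilde E) =: m > -\infty$, and one needs the $\limsup$ to be leveraged differently — rescale by noting $-n \, y_n = h_n + n k_n \in H + K$, so $0 = \lim (1/n)(-n y_n) $ would put $0$ in a suitable closure; more precisely $z^\ast(y_n) \le -n^{-1} z^\ast(h_n)$, and since $y_n \in \widetilde E$, $z^\ast(y_n) \ge m$, giving $z^\ast(h_n) \le -nm$ for all $n$, which is \emph{no} constraint if $m \le 0$. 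This indicates (iii) genuinely needs \eqref{zv-f5b} in the stronger ``for all sequences'' form together with a more global argument — the cleanest route is: the negation produces $h_n \in H$, $k_n \in K$ with $n^{-1}h_n + k_n = -y_n \in -\widetilde E$, hence $h_n + n k_n \in -n\widetilde E \subseteq $ a set whose $z^\ast$-image, for the $z^\ast$ furnished by \eqref{zv-f5b} with $\limsup z^\ast(h_n) > 0$, satisfies $z^\ast(h_n) \le z^\ast(h_n + n k_n) = -n z^\ast(y_n) \le -n m$, and combining with $\limsup z^\ast(h_n) > 0$ one still needs $m \ge 0$; so in fact one should apply \eqref{zv-f5b} not to $(h_n)$ but observe $K^+$-boundedness lets us pass first to $z^\ast$ with $\inf z^\ast(\widetilde E) > -\infty$ and \emph{then} that this same $z^\ast$, or the one from \eqref{zv-f5b}, does the job. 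I will present (iii) by choosing $z^\ast$ from \eqref{zv-f5b} applied to the specific sequence $(h_n)$ arising from the negation, extract $z^\ast(h_{n_j}) \ge \delta$, and derive $m \le z^\ast(y_{n_j}) \le -\delta/n_j + $ (nonpositive), letting $j \to \infty$ to get $m \le 0$, hence the hypothesis must already be stated for the relevant regime; I expect to need to recheck against the cited references \cite{Qiu:13} that (iii) as stated is exactly what follows, and this consistency check is the main obstacle rather than any deep difficulty.
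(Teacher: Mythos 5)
Your ``Moreover'' step and your treatment of case (ii) are correct and essentially match the paper's. The genuine gaps are in (i) and (iii), and both stem from the same missed idea: the negation of (\ref{r-bd6}) must be instantiated at $\varepsilon=n\to\infty$, not at $\varepsilon=1/n\to 0$. Writing $y_n\in E\cap(-nH-K)$ as $y_n=-nh_n-k_n$ with $h_n\in H$, $k_n\in K$ (and noting $y_n\in\widetilde{E}$), one gets for any $z^{\ast}\in K^{+}$ that $\beta_{z^{\ast}}:=\inf z^{\ast}(\widetilde{E})\le z^{\ast}(y_n)\le -n\,z^{\ast}(h_n)$, i.e.\ $z^{\ast}(h_n)\le -\beta_{z^{\ast}}/n\to 0$ as soon as $\beta_{z^{\ast}}>-\infty$. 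For (iii) this immediately contradicts $\limsup z^{\ast}(h_n)>0$ furnished by (\ref{zv-f5b}) applied to the sequence $(h_n)\subset H$. Your choice $y_n=-n^{-1}h_n-k_n$ yields only $z^{\ast}(h_n)\le -n\beta_{z^{\ast}}$, which, as you yourself observe, is vacuous when $\beta_{z^{\ast}}\le 0$; you diagnose the dead end but never exit it, so (iii) is not proved.

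For (i), your main route --- extracting $z^{\ast}\in K^{+}$ with $\inf z^{\ast}(H)>0$ from $0\notin\operatorname{cl}(H+K)$ via Lemma \ref{zv-lem1}(ii) and reducing to case (ii) --- is unavailable here: that equivalence requires $Y$ to be locally convex, whereas the lemma is stated for a general separated topological vector space, where $K^{+}$ may well be $\{0\}$ (see Example \ref{ex-bd1}). The paper argues directly: with $y_n=-nh_n-k_n$ and $\widetilde{E}\subset B+K$ ($B$ bounded) one also writes $y_n=b_n+k_n^{\prime}$ and obtains $-\tfrac{1}{n}b_n=h_n+\tfrac{1}{n}(k_n+k_n^{\prime})\in H+K$, whence $0\in\operatorname{cl}_{seq}(H+K)$, a contradiction; in the $K$--bounded case one replaces $B$ by $\lambda U$ with $U$ a balanced neighbourhood of $0$ disjoint from $H+K$ and gets $-\tfrac{\lambda}{n}u_n\in U\cap(H+K)$ for $n\ge\lambda$. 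Your fallback remark (``one argues directly by extracting sequences'') points in this direction but is not carried out, so (i) also remains unproved as written.
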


\begin{proof}
We prove (i)--(iii) by contradiction. So, assume that (\ref{r-bd6}) does not
hold, and so for each $n\in\mathbb{N}\backslash\{0\}$ there exists
\begin{equation}
y_{n}\in E\cap(-nH-K)\subset E\cap\left(  -\mathbb{P}(H+K)\right)
=\widetilde{E}; \label{r-bd8}%
\end{equation}
hence there exist $h_{n}\in H$ and $k_{n}\in K$ such that $y_{n}=-nh_{n}%
-k_{n}$ (for $n\geq1$). It follows that $\beta_{z^{\ast}}:=\inf z^{\ast
}(\widetilde{E})<+\infty$ for $z^{\ast}\in K^{+}$. Moreover,%
\begin{equation}
\beta_{z^{\ast}}\leq z^{\ast}(y_{n})=-n\cdot z^{\ast}(h_{n})-z^{\ast}%
(k_{n})\leq-n\cdot z^{\ast}(h_{n})\quad\forall n\geq1,~\forall z^{\ast}\in
K^{+}. \label{r-bd0}%
\end{equation}

(i) (a) Let $\widetilde{E}\subset B+K$ with $B\subset Y$ a bounded set. Since
$y_{n}\in\widetilde{E}$, there exist also $b_{n}\in B$ and $k_{n}^{\prime}\in
K$ such that $y_{n}=b_{n}+k_{n}^{\prime}$ for $n\geq1$. It follows that
$-\frac{1}{n}b_{n}=h_{n}+\frac{1}{n}(k_{n}+k_{n}^{\prime})\in H+K$ for
$n\geq1$. Since $(b_{n})$ is bounded, we get the contradiction $0\in
\operatorname*{cl}_{seq}(H+K)$.

(b) Because $0\notin\operatorname*{cl}(H+K)$, there exists a
balanced $U\in\mathcal{N}_{Y}$ such that $U\cap(H+K)=\emptyset $.
Since $\widetilde{E}$ is $K$--bounded, there exists $\lambda>0$ such
that $\widetilde{E}\subset\lambda U+K$. Since
$y_{n}\in\widetilde{E}$, there exists
$u_{n}\in U$ and $k_{n}^{\prime}\in K$ such that $y_{n}=\lambda u_{n}%
+k_{n}^{\prime}$ for $n\geq1$. It follows that $-\frac{\lambda}{n}u_{n}%
=h_{n}+\frac{1}{n}(k_{n}+k_{n}^{\prime})\in H+K$ for $n\geq1$. Taking
$n\in\mathbb{N}^{\ast}$ such that $n\geq\lambda$, we obtain the contradiction
$-\frac{\lambda}{n}u_{n}\in U\cap(H+K)\neq\emptyset$.

(ii) For $z^{\ast}$ provided by our assumption we have that
$\alpha_{z^{\ast}}:=\inf z^{\ast }(H)>0$ and
$\beta_{z^{\ast}}\in\mathbb{R}$. Using (\ref{r-bd0}) we obtain that
$\beta_{z^{\ast}}\leq-n\alpha_{z^{\ast}}$ for $n\geq1$. Taking the
limit for $n\rightarrow\infty$ we get the contradiction
$\beta_{z^{\ast}}=-\infty$.

(iii) By (\ref{zv-f5b}), there exists $z^{\ast}\in K^{+}$ such that
$\limsup
z^{\ast}(h_{n})>0$. From (\ref{r-bd0}) we have that $z^{\ast}(h_{n}%
)\leq-n^{-1}\beta_{ z^{\ast}}$ for $n\geq1$, whence the contradiction $\limsup
z^{\ast}(h_{n})\leq0$.

For the last assertion just observe that, for
$E:=(Y_{\mathcal{A}}-y_{0})$, (\ref{r-bd6}) is saying that there
exists $\varepsilon>0$ such that $y_{0}\notin
Y_{\mathcal{A}}+\varepsilon H+K$ which clearly implies
(\ref{r-bd4}).
\end{proof}

\medskip Using Lemma \ref{p-bd0}~(i) one gets \cite[Prop.\ 2.2]{Qiu:12} taking
$E:=\Gamma(X)-y$ with $y\in Y$; there $H=\{k_{0}\}\subset K\backslash\{0\}$,
$K$ being a pointed closed convex cone. Lemma \ref{p-bd0}~(ii) is obtained in
\cite[Prop.\ 5.2]{Qiu:16b}; there $E:=f(X)-f(x_{0})$ for some function $f:X\to
Y$.

\begin{proposition}
\label{p-bd}Let $\mathcal{A}\subset X\times2^{Y}$ be such that $Y_{\mathcal{A}%
} \neq\emptyset$.

\emph{(i)} Assume that either \emph{(a)} $0\notin\operatorname*{cl}%
\nolimits_{seq}(H+K)$ and $Y_{\mathcal{A}}$ is quasi $K$--bounded, or
\emph{(b)}~$0\notin\operatorname*{cl}(H+K)$ and $Y_{\mathcal{A}}$ is
{$K$--bounded}; then
\begin{equation}
\forall y\in Y,~\exists\varepsilon>0:Y_{\mathcal{A}}\cap(y-\varepsilon
H-K)=\emptyset. \label{r-bd5}%
\end{equation}

\emph{(ii)} Assume that either \emph{(a)} there exists $z^{\ast}\in K^{+}$
such that $\inf z^{\ast}(Y_{\mathcal{A}})>-\infty$ and $\inf z^{\ast}(H)>0$,
or \emph{(b)}$~Y_{\mathcal{A}}$ is $K^{+}$--bounded and (\ref{zv-f5b}) is
satisfied. Then (\ref{r-bd5}) holds.

\emph{(iii)} Assume that $H$ is seq-compact. If there exists a convex cone
$C\subset Y$ and a bounded set $B\subset Y$ such that $K\subset C$,
$H\subset\operatorname*{int}C$ and $B\not \subset A+C$ for every $A\in
\Pr_{2^{Y}}(\mathcal{A})$, then (\ref{r-bd5}) holds; in particular, if
$H\subset\operatorname*{int}K$ and (\ref{r-bd2}) is verified then
(\ref{r-bd5}) holds.

\emph{(iv)} Assume that (\ref{r-bd5}) is verified. Then (\ref{r-bd2}) holds,
and for every nonempty set $A_{0}\in\Pr\nolimits_{2^{Y}}(\mathcal{A})$ there
exists $\varepsilon>0$ such that (\ref{r-bd4}) holds.
\end{proposition}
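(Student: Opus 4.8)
The plan is to derive each of (i)--(iii) by reducing it, via Lemma \ref{p-bd0}, to the single inclusion-type statement (\ref{r-bd6}) applied to a suitable translate of $Y_{\mathcal{A}}$, and then to obtain (iv) directly from the definitions. First I would fix an arbitrary $y\in Y$ and set $E:=Y_{\mathcal{A}}-y$; then (\ref{r-bd5}) for this $y$ is precisely (\ref{r-bd6}) for this $E$, since $Y_{\mathcal{A}}\cap(y-\varepsilon H-K)=\emptyset$ iff $E\cap(-\varepsilon H-K)=\emptyset$. So it suffices to verify, in each case, that the hypothesis of the corresponding part of Lemma \ref{p-bd0} holds for $\widetilde E=E\cap(-\mathbb{P}H-K)$. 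For (i), note that if $Y_{\mathcal{A}}$ is quasi $K$--bounded (resp. $K$--bounded), then so is the translate $E$, hence so is the subset $\widetilde E\subseteq E$ (a subset of a quasi/$K$--bounded set is again quasi/$K$--bounded, since $\widetilde E\subset B+K$ whenever $E\subset B+K$); combined with $0\notin\operatorname{cl}_{seq}(H+K)$ (resp. $0\notin\operatorname{cl}(H+K)$) this is exactly hypothesis (i)(a) (resp. (i)(b)) of Lemma \ref{p-bd0}, giving (\ref{r-bd6}). For (ii)(a), from $\inf z^{\ast}(Y_{\mathcal{A}})>-\infty$ we get $\inf z^{\ast}(E)=\inf z^{\ast}(Y_{\mathcal{A}})-z^{\ast}(y)>-\infty$, hence $\inf z^{\ast}(\widetilde E)\geq\inf z^{\ast}(E)>-\infty$ as well, and together with $\inf z^{\ast}(H)>0$ this is hypothesis (ii) of Lemma \ref{p-bd0}. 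For (ii)(b), $K^{+}$--boundedness of $Y_{\mathcal{A}}$ passes to $E$ and then to $\widetilde E$ (for each $z^{\ast}\in K^{+}$, $z^{\ast}(\widetilde E)\subseteq z^{\ast}(E)=z^{\ast}(Y_{\mathcal{A}})-z^{\ast}(y)$ is bounded below), and with (\ref{zv-f5b}) this is hypothesis (iii) of Lemma \ref{p-bd0}.

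For part (iii) I would argue more directly, using the cone $C$ and the seq-compactness of $H$. Fix $y\in Y$ and suppose, for contradiction, that for every $n\in\mathbb{N}^{\ast}$ there is $y_n\in Y_{\mathcal{A}}$ with $y_n\in y-nH-K$, say $y_n=y-nh_n-k_n$ with $h_n\in H$, $k_n\in K$. By seq-compactness of $H$, pass to a subsequence with $h_{n_j}\to h\in H\subset\operatorname{int}C$. Then $\tfrac{1}{n_j}(y-y_{n_j})=h_{n_j}+\tfrac{1}{n_j}k_{n_j}\in H+K\subseteq\operatorname{int}C+C=\operatorname{int}C$ (using $\operatorname{int}C+C\subseteq\operatorname{int}C$ for a convex cone $C$). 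Now choose a balanced neighbourhood $U$ of $0$ with $h+U\subseteq\operatorname{int}C$; for $j$ large, $\tfrac{1}{n_j}(y-y_{n_j})\in h_{n_j}+U'$ for a suitably shrunk $U'$, so $\tfrac{1}{n_j}(y-y_{n_j})\in\operatorname{int}C$ and moreover, scaling, $y-y_{n_j}\in n_j\operatorname{int}C\subseteq\operatorname{int}C$ (cone), hence $y\in y_{n_j}+\operatorname{int}C\subseteq A_{n_j}+C$ for the set $A_{n_j}\in\Pr_{2^Y}(\mathcal{A})$ containing $y_{n_j}$. The cleanest route is actually the contrapositive of the hypothesis: since $B\not\subset A+C$ for all $A\in\Pr_{2^Y}(\mathcal{A})$ and $B$ is bounded, one derives (as in the proof of the third implication in (\ref{r-bound})) that $Y_{\mathcal{A}}$ is $C$--bounded in the relevant weak sense; combined with $H\subseteq\operatorname{int}C$ one can run an argument parallel to Lemma \ref{p-bd0}(i)(b) with $C$ in place of $K$ (legitimate because $K\subseteq C$, so $-nH-K\subseteq -nH-C$). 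The special case $H\subseteq\operatorname{int}K$, (\ref{r-bd2}) is then $C:=K$, $B$ as in (\ref{r-bd2}).

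For part (iv), assume (\ref{r-bd5}). Taking any $b\in Y_{\mathcal{A}}$ and any $h\in H$ and $\varepsilon>0$ as provided by (\ref{r-bd5}) for the point $y:=b+\varepsilon h$ (or more simply for $y$ ranging so that $b\in y-\varepsilon H-K$), one sees that no $A\in Y_{\mathcal{A}}$ can satisfy the containment forced by the negation of (\ref{r-bd2}); spelled out, (\ref{r-bd5}) with a fixed witness $y$ says $b\notin y-\varepsilon H-K$ for all $b\in Y_{\mathcal{A}}$, i.e. the bounded set $B:=\{y\}-\varepsilon H$ (bounded when one also uses, say, that a single-point translate suffices — actually take $B:=\{y\}$) satisfies $B\not\subset A+K$ for every $A\in\Pr_{2^Y}(\mathcal{A})$, which is (\ref{r-bd2}). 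Finally, for a fixed nonempty $A_0\in\Pr_{2^Y}(\mathcal{A})$, pick $y_0\in A_0$ and apply (\ref{r-bd5}) with $y:=y_0$ to get $\varepsilon>0$ with $Y_{\mathcal{A}}\cap(y_0-\varepsilon H-K)=\emptyset$, i.e. $y_0\notin Y_{\mathcal{A}}+\varepsilon H+K$; since $A_0\subseteq Y_{\mathcal{A}}$ and $A_0$ is one of the sets $A$, this yields $A_0\not\subset A+\varepsilon H+K$ for every $A\in\Pr_{2^Y}(\mathcal{A})$ — indeed if $A_0\subseteq A+\varepsilon H+K$ then in particular $y_0\in A+\varepsilon H+K\subseteq Y_{\mathcal{A}}+\varepsilon H+K$, a contradiction — which is (\ref{r-bd4}).

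The main obstacle I anticipate is part (iii): one must be careful that $H\subseteq\operatorname{int}C$ together with mere boundedness of $B$ genuinely forces a uniform separation of the rays $-nH$ from $Y_{\mathcal{A}}$, and the seq-compactness of $H$ is exactly what upgrades the pointwise interiority "$h\in\operatorname{int}C$" to a neighbourhood valid along the whole (sub)sequence $(h_n)$. Getting the quantifiers right there — extracting a convergent subsequence of $(h_n)$ before choosing the neighbourhood of its limit — is the delicate point; parts (i), (ii), (iv) are essentially bookkeeping on top of Lemma \ref{p-bd0} and the definitions.
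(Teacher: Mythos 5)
Parts (i) and (ii) are correct and coincide with the paper's proof: translate by $y$, observe that (quasi/$K$/$K^{+}$)-boundedness and lower boundedness of $z^{\ast}$ pass to $E=Y_{\mathcal{A}}-y$ and to $\widetilde{E}\subset E$, and invoke Lemma \ref{p-bd0}. The two remaining parts, however, have genuine gaps. In (iii) your direct computation only shows that the arbitrary fixed point $y$ satisfies $y\in y_{n_j}+\operatorname{int}C\subseteq A_{n_j}+C$; this does not contradict the hypothesis, which merely provides, for each $n$, \emph{some} $b_n\in B$ with $b_n\notin A_n+C$ and says nothing about $y$. You never introduce these witnesses $b_n$, and they are exactly where the boundedness of $B$ enters: the paper combines $b_n-y_n\in Y\setminus\operatorname{int}C$ with $y_n-y_0+nh_n\in-C$ and the fact that $Y\setminus\operatorname{int}C$ is a cone stable under subtracting $C$, to get $\tfrac1n(b_n-y_0)+h_n\in Y\setminus\operatorname{int}C$; since $B$ is bounded, $\tfrac{1}{n_p}(b_{n_p}-y_0)\to 0$, and the seq-compact limit $h$ of $(h_{n_p})$ lands in the closed set $Y\setminus\operatorname{int}C$, contradicting $H\subseteq\operatorname{int}C$. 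Your fallback — that $B\not\subset A+C$ for all $A$ plus boundedness of $B$ yields some form of $C$-boundedness of $Y_{\mathcal{A}}$, after which one argues as in Lemma \ref{p-bd0}(i)(b) — is asserted, not proved, and is false in general: conditions of type (\ref{r-bd2}) impose no boundedness whatsoever on $Y_{\mathcal{A}}$ (see Example \ref{ex-bd}, where $E=\{0\}\times\mathbb{R}_-$ satisfies such a condition with $B$ a singleton but is not even $K^{+}$-bounded).

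In the first half of (iv) the choice $B:=\{y\}$ does not work. Condition (\ref{r-bd5}) at $y$ gives $y\notin Y_{\mathcal{A}}+\varepsilon H+K$, \emph{not} $y\notin Y_{\mathcal{A}}+K$; in particular, if $y\in Y_{\mathcal{A}}$ then $\{y\}\subseteq A+K$ for any $A$ containing $y$, so $\{y\}$ cannot witness (\ref{r-bd2}). Your other candidate $B:=y-\varepsilon H$ need not be bounded, since $H$ is not assumed bounded. The correct choice (the paper's) is $y:=0$ and $B:=\{-\varepsilon h^{0}\}$ for a single $h^{0}\in H$: if $-\varepsilon h^{0}\in A+K$ for some $A\in\Pr_{2^{Y}}(\mathcal{A})$, then some element of $Y_{\mathcal{A}}$ lies in $-\varepsilon H-K$, contradicting (\ref{r-bd5}). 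The second half of (iv), deriving (\ref{r-bd4}) by applying (\ref{r-bd5}) at a point $y_{0}\in A_{0}$, is correct and is exactly the paper's argument.
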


\begin{proof}
(i) (a) Take $y\in Y$ and set $E:=Y_{\mathcal{A}}-y$. Then $E$ is quasi
$K$--bounded and $\widetilde{E}:=E\cap\left(  -\mathbb{P}(H+K)\right)  \subset
E$, and so $\widetilde{E}$ is quasi $K$--bounded. By Lemma \ref{p-bd0}~(i),
there exists $\varepsilon>0$ such that $Y_{\mathcal{A}}-y=E\subset-\varepsilon
H-K$, whence the desired conclusion holds. The proof of case (b) is similar.

(ii) Using now Lemma \ref{p-bd0}~(ii) and (iii), the same approach as in (i)
shows that (\ref{r-bd5}) holds when (a) or (b) is verified, respectively.

(iii) Assume that (\ref{r-bd5}) does not hold. Then there exists $y_{0}\in Y$
such that for every $n\in\mathbb{N}^{\ast}$, there exists $A_{n}\in
\Pr\nolimits_{2^{Y}}(\mathcal{A})$ with $A_{n}\cap(y_{0}-nH-K)\neq\emptyset;$
take $y_{n}\in A_{n}\cap(y_{0}-nH-K)$. Since $B\not \subset A_{n}+C$,
$B\not \subset y_{n}+C$, and so there exists $b_{n}\in B$ such that
$b_{n}\notin y_{n}+C$. It follows that $b_{n}-y_{n}\notin C$, whence
$b_{n}-y_{n}\in Y\backslash C\subset Y\backslash\operatorname*{int}C$. From
our choice of $y_{n}$, there exists $h_{n}\in H$ such that $y_{n}-y_{0}%
+nh_{n}\in-K\subset-C$. It follows that
\[
b_{n}-y_{0}+nh_{n}=(b_{n}-y_{n})+(y_{n}-y_{0}+nh_{n})\in(Y\backslash
\operatorname*{int}C)-C\subset Y\backslash\operatorname*{int}C.
\]
Since $Y\backslash\operatorname*{int}C$ is a cone, we obtain that $\frac{1}%
{n}(b_{n}-y_{0})+h_{n}\in Y\backslash\operatorname*{int}C$. Because $H$ is
seq-compact, there exists a strictly increasing sequence $(n_{p})_{p\geq
1}\subset\mathbb{N}^{\ast}$ such that $h_{n_{p}}\rightarrow h\in H$. It
follows that $\frac{1}{n_{p}}(b_{n_{p}}-y_{0})+h_{n_{p}}\rightarrow
h\in\operatorname*{cl}_{seq}\left(  Y\backslash\operatorname*{int}C\right)
=Y\backslash\operatorname*{int}C$, contradicting the fact that $H\subset
\operatorname*{int}C$.

(iv) For the first assertion, taking $y:=0$ in (\ref{r-bd5}), there exists
$\varepsilon>0$ such that $Y_{\mathcal{A}}\cap(-\varepsilon H-K)=\emptyset$.
Hence (\ref{r-bd2}) holds for $B:=\{-\varepsilon h^{0}\}$ with $h^{0}\in H$.

For the second assertion, take $A_{0}\in\Pr\nolimits_{2^{Y}}(\mathcal{A})$
with $A_{0}\neq\emptyset$. Consider $y_{0}\in A_{0}$. Since (\ref{r-bd5})
holds, there exists $\varepsilon>0$ such that $Y_{\mathcal{A}}\cap
(y_{0}-\varepsilon H-K)=\emptyset$, that is $y_{0}\notin Y_{\mathcal{A}%
}+\varepsilon H+K$. Hence $A_{0}\not \subset Y_{\mathcal{A}}+\varepsilon H+K$,
and so $A_{0}\not \subset A+\varepsilon H+K$ for any $A\in\Pr\nolimits_{2^{Y}%
}(\mathcal{A})$. The proof is complete.
\end{proof}

\medskip

Some relations among the conditions on $H$ used in Proposition \ref{p-bd} are
provided in the next result.

\begin{lemma}
\label{lem0}\emph{(i)} If $\emptyset\neq S,T\subset Y$ and $S$ is seq-compact
then $\operatorname*{cl}_{seq}(S+T)=S+\operatorname*{cl}_{seq}T$; in
particular, if $H$ is seq-compact, then $0\not \in \operatorname*{cl}%
_{seq}(H+K)$ if and only if $H\cap(-\operatorname*{cl}_{seq}K)=\emptyset$.

\emph{(ii)} The following implications hold:%
\begin{equation}
\left[  \exists z^{\ast}\in K^{+}:\inf z^{\ast}(H)>0\right]  \Rightarrow
(\ref{zv-f5b})\Rightarrow0\not \in \operatorname*{cl}\nolimits_{seq}(H+K).
\label{zv-f5c}%
\end{equation}

\emph{(iii)} Assume that $H$ is seq-compact and $C\subset Y$ is a proper
convex cone such that $K\subset C$ and $H\subset\operatorname*{int}C$. Then
$0\not \in \operatorname*{cl}_{seq}(H+K)$.
\end{lemma}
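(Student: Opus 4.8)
The plan is to dispatch the three parts in order, using only elementary sequential arguments, since everything here is about sequential closures and convex cones. For part (i), the inclusion $S + \operatorname{cl}_{seq} T \subseteq \operatorname{cl}_{seq}(S+T)$ is immediate: if $s \in S$ and $t = \lim t_n$ with $t_n \in T$, then $s + t_n \to s+t$ and $s + t_n \in S+T$. For the reverse inclusion, take $y \in \operatorname{cl}_{seq}(S+T)$, so $y = \lim (s_n + t_n)$ with $s_n \in S$, $t_n \in T$; by seq-compactness of $S$, pass to a subsequence with $s_{n_k} \to s \in S$, whence $t_{n_k} = (s_{n_k} + t_{n_k}) - s_{n_k} \to y - s$, so $y - s \in \operatorname{cl}_{seq} T$ and $y = s + (y-s) \in S + \operatorname{cl}_{seq} T$. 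For the ``in particular'' claim, apply this with $S := H$, $T := K$: since $H$ is seq-compact, $\operatorname{cl}_{seq}(H+K) = H + \operatorname{cl}_{seq} K$, so $0 \in \operatorname{cl}_{seq}(H+K)$ iff $0 \in H + \operatorname{cl}_{seq} K$ iff there is $h \in H$ with $-h \in \operatorname{cl}_{seq} K$, i.e. iff $H \cap (-\operatorname{cl}_{seq} K) \neq \emptyset$.

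For part (ii), the first implication: suppose $z^\ast \in K^+$ with $\inf z^\ast(H) =: \alpha > 0$, and let $(h_n) \subseteq H$. Then $z^\ast(h_n) \geq \alpha$ for all $n$, so $\limsup z^\ast(h_n) \geq \alpha > 0$, which is exactly \eqref{zv-f5b} (the same $z^\ast$ works for every sequence). For the second implication, assume \eqref{zv-f5b} and suppose for contradiction that $0 \in \operatorname{cl}_{seq}(H+K)$; then there are $h_n \in H$, $k_n \in K$ with $h_n + k_n \to 0$. Apply \eqref{zv-f5b} to $(h_n)$ to get $z^\ast \in K^+$ with $\limsup z^\ast(h_n) > 0$. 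But $z^\ast(h_n) \leq z^\ast(h_n) + z^\ast(k_n) = z^\ast(h_n + k_n) \to 0$ since $z^\ast$ is continuous and $z^\ast(k_n) \geq 0$; hence $\limsup z^\ast(h_n) \leq 0$, a contradiction.

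Part (iii) is then essentially a corollary: under its hypotheses, apply Lemma \ref{p-bd0}(i)(b)-style reasoning, or more directly argue that $0 \notin \operatorname{cl}_{seq}(H+K)$ by contradiction. Suppose $h_n + k_n \to 0$ with $h_n \in H \subseteq \operatorname{int} C$, $k_n \in K \subseteq C$. Since $H$ is seq-compact, pass to a subsequence $h_{n_p} \to h \in H \subseteq \operatorname{int} C$; then $k_{n_p} = (h_{n_p} + k_{n_p}) - h_{n_p} \to -h$, and since $k_{n_p} \in K \subseteq C$ we get $-h \in \operatorname{cl}_{seq} C \subseteq \operatorname{cl} C$. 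Thus $h \in \operatorname{int} C \cap (-\operatorname{cl} C)$, which forces $0 = h + (-h) \in \operatorname{int} C + (-\operatorname{cl} C) \subseteq \operatorname{int}C$ (an open convex set plus a point); but then $0 \in \operatorname{int} C$ while $C$ is a proper cone, contradicting properness (a proper cone cannot contain a neighborhood of the origin, else $C = Y$). I expect the only mildly delicate point to be handling sequential versus topological closures cleanly in part (iii) — one should phrase the contradiction using $\operatorname{cl} C$ (not $\operatorname{cl}_{seq}C$) together with $\operatorname{int}C$ so that the convexity argument $\operatorname{int}C + (-\operatorname{cl}C) \subseteq \operatorname{int}C$ applies, and then invoke properness of $C$; alternatively, since $H$ is seq-compact one may simply cite Lemma \ref{p-bd0} or part (i) above together with part (ii). No new machinery beyond what is already in the excerpt is needed.
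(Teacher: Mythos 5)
Your arguments for (i) and (ii) are correct and coincide with the paper's: subsequence extraction via seq-compactness for (i), and the observation $0\le z^{\ast}(h_{n})\le z^{\ast}(h_{n}+k_{n})\rightarrow 0$ for the second implication in (ii). For (iii) the paper argues directly via part (i): $\operatorname{cl}_{seq}(H+K)=H+\operatorname{cl}_{seq}K\subset\operatorname{int}C+\operatorname{cl}C=\operatorname{int}C\not\ni 0$; your by-contradiction sequential version is the same argument unfolded, and you correctly note the direct route as an alternative.

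One slip to fix in (iii): having $h\in\operatorname{int}C$ and $-h\in\operatorname{cl}C$, the correct conclusion is $0=h+(-h)\in\operatorname{int}C+\operatorname{cl}C\subset\operatorname{int}C$, not $\operatorname{int}C+(-\operatorname{cl}C)\subset\operatorname{int}C$ — the latter inclusion is false in general (e.g.\ $C=\mathbb{R}_{+}$ in $\mathbb{R}$ gives all of $\mathbb{R}$). Moreover the justification ``an open convex set plus a point'' only yields openness of the sum; the containment in $\operatorname{int}C$ needs the convex-cone structure: $\operatorname{int}C+\operatorname{cl}C$ is an open subset of $\operatorname{cl}C+\operatorname{cl}C\subset\operatorname{cl}C$, hence lies in $\operatorname{int}(\operatorname{cl}C)=\operatorname{int}C$. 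With that correction the proof is complete and matches the paper's.
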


\begin{proof}
(i) The inclusion $S+\operatorname*{cl}_{seq}T\subset\operatorname*{cl}
_{seq}(S+T)$ is obvious. Assume that $S$ is seq-compact and take
$y\in\operatorname*{cl}_{seq}(S+T)$. Then there exist $(u_{n})_{n\geq1}\subset
S$ and $(v_{n})_{n\geq1}\subset T$ such that $u_{n}+v_{n}\rightarrow y$.
Because $S$ is seq-compact, there exists a strictly increasing sequence
$(n_{p})_{p\geq1}\subset\mathbb{N}^{\ast}$ such that $u_{n_{p}}\rightarrow
u\in S$. It follows that $v_{n_{p}}=(u_{n_{p}}+u_{n_{p}})-u_{n_{p}}\rightarrow
v:=y-u$, and so $y=u+v\in S+\operatorname*{cl}_{seq}T$.

(ii) The first implication in (\ref{zv-f5c}) is obvious. Assume that
$0\in\operatorname*{cl}_{seq}(H+K)$ and take $z^{\ast}\in K^{+}$. Then there
exists $(h_{n})_{n\geq1}\subset H$ and $(k_{n})_{n\geq1}\subset K$ such that
$y_{n}:=h_{n}+k_{n}\rightarrow0$. Since $z^{\ast}(y_{n})=z^{\ast}%
(h_{n})+z^{\ast}(k_{n})\geq z^{\ast}(h_{n})\geq0$, it follows that $z^{\ast
}(h_{n})\rightarrow0$. This shows that (\ref{zv-f5b}) does not hold. Hence,
the second implication (\ref{zv-f5c}) holds, too.

(iii) Clearly, $\operatorname*{cl}_{seq}K\subset\operatorname*{cl}C$, and so,
using (i) and the properness of $C$,
\[
\operatorname*{cl}\nolimits_{seq}(H+K)=H+\operatorname*{cl}\nolimits_{seq}K
\subset\operatorname*{int} C+\operatorname*{cl}C=\operatorname*{int}C.
\]
Since $0\notin\operatorname*{int}C$ ($C$ being proper), $0\notin
\operatorname*{cl}\nolimits_{seq}(H+K)$.
\end{proof}

\medskip

The example below shows that some of the reversed implications in Proposition
\ref{p-bd} are not valid.

\begin{example}
\label{ex-bd}Let $Y:=\mathbb{R}^{2}$ be endowed with the Euclidean norm,
$K:=\mathbb{R}_{+}^{2}$, $H:=\{k^{0}\}$ with $k^{0}:=(1,0)\in K$ and
$E:=\{0\}\times\mathbb{R}_{-}$, where $\mathbb{R}_{-}:=-\mathbb{R}_{+}$. It is
clear that $H$ is compact, $E$ is not $K^{+}$--bounded (for example
$\varphi:\mathbb{R}^{2}\rightarrow\mathbb{R}$ defined by $\varphi(y_{1}%
,y_{2}):=y_{2}$ is in $K^{+}$ and $\inf\varphi(E)=-\infty$), and so, by
(\ref{r-bound}), $E$ is not quasi $K$--bounded. However, (\ref{r-bd5}) holds
for $Y_{\mathcal{A}}\subset E$. Indeed, for $y=(y_{1},y_{2})\in\mathbb{R}^{2}%
$, take $\varepsilon:=1+\max\{0,y_{1}\}$; then $E\cap(y-\varepsilon
k^{0}-K)=\emptyset$.
\end{example}

The following result will play an important role in the next sections.

\begin{proposition}
\label{lem4}Assume that $\left(  (x_{n},A_{n})\right)  _{n\geq1}\subset
X\times2^{Y}$ is a $\preceq_{H}$--decreasing sequence with $A_{1}\neq
\emptyset$. Set $E:=\cup_{n\geq1}A_{n}$, and consider the following assertions:

\emph{(i)} either \emph{(a)} $0\notin\operatorname*{cl}\nolimits_{seq}(H+K)$
and $E$ is quasi $K$--bounded, or \emph{(b)} $0\notin\operatorname*{cl}(H+K)$
and $E$ is $K$--bounded;

\emph{(ii)} there exists $z^{\ast}\in K^{+}$ such that $\inf z^{\ast
}(E)>-\infty$ and $\inf z^{\ast}(H)>0$;

\emph{(iii)} the set $E$ is $K^{+}$--bounded and condition (\ref{zv-f5b}) is satisfied;

\emph{(iv) }$H$ is seq-compact, and there exist a bounded set $B\subset Y$ and
a convex cone $C$ such that $K\subset C$, $H\subset\operatorname*{int}C$ and
$B\not \subset A_{n}+C$ for $n\geq1;$

\emph{(v)} there exist $\alpha>0$ and an infinite set $P\subset\mathbb{N}
^{\ast}$ such that $A_{1}\not \subset (A_{n}+\alpha H+K)$ for all $n\in P$.

Then \emph{[(i) }$\vee$ \emph{(ii) }$\vee$\emph{ (iii) }$\vee$\emph{
(iv)]} $\Rightarrow$ \emph{(v)}
$\Rightarrow\sum_{n\geq1}d(x_{n},x_{n+1})\leq\alpha;$ in particular,
if at least one of the assertions \emph{(i)--(v)} holds, then
$(x_{n})_{n\geq1}$ is a Cauchy sequence.
\end{proposition}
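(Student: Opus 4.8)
The plan is to treat the two asserted implications separately. For "[(i) $\vee$ (ii) $\vee$ (iii) $\vee$ (iv)] $\Rightarrow$ (v)" I would introduce the auxiliary family $\mathcal{A}':=\{(x_n,A_n)\mid n\geq 1\}\subseteq X\times 2^Y$, so that $Y_{\mathcal{A}'}=\bigcup_{n\geq 1}A_n=E$, which is nonempty since $A_1\neq\emptyset$, and $\Pr_{2^Y}(\mathcal{A}')=\{A_n\mid n\geq 1\}$. With this identification, condition (i) is exactly the hypothesis of Proposition \ref{p-bd}(i) for $\mathcal{A}'$, condition (ii) is the hypothesis of Proposition \ref{p-bd}(ii)(a), condition (iii) is the hypothesis of Proposition \ref{p-bd}(ii)(b) (namely "$Y_{\mathcal{A}'}$ is $K^+$--bounded and \eqref{zv-f5b}"), and condition (iv) is the hypothesis of Proposition \ref{p-bd}(iii) (since "$B\not\subset A+C$ for every $A\in\Pr_{2^Y}(\mathcal{A}')$" means "$B\not\subset A_n+C$ for all $n\geq 1$"). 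In each of the four cases Proposition \ref{p-bd} yields \eqref{r-bd5} for $\mathcal{A}'$; then Proposition \ref{p-bd}(iv), applied with the nonempty set $A_0:=A_1\in\Pr_{2^Y}(\mathcal{A}')$, provides $\varepsilon>0$ with $A_1\not\subset A+\varepsilon H+K$ for every $A\in\Pr_{2^Y}(\mathcal{A}')$, i.e. \eqref{r-bd4} holds; in particular $A_1\not\subset A_n+\varepsilon H+K$ for all $n\geq 1$. Hence (v) holds with $\alpha:=\varepsilon$ and $P:=\mathbb{N}\setminus\{0\}$.

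For the implication "(v) $\Rightarrow\sum_{n\geq 1}d(x_n,x_{n+1})\leq\alpha$", write $d_n:=d(x_n,x_{n+1})$ and $\sigma_n:=\sum_{i=1}^n d_i$. Because the sequence is $\preceq_H$--decreasing, $(x_{n+1},A_{n+1})\preceq_H(x_n,A_n)$ translates, via \eqref{zv-eq1-b} and \eqref{zv-eqfh} together with $d(x_{n+1},x_n)=d_n$, into $A_n\subset A_{n+1}+d_nH+K$. Iterating this, absorbing the $K$--summands (using $K+K=K$) and collecting the $H$--summands with the additive relation $\gamma H+\delta H\subset(\gamma+\delta)H+K$ from \eqref{r-hk}, one gets by induction on $n$ that $A_1\subset A_{n+1}+\sigma_n H+K$ for every $n\geq 1$. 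Now assume, for contradiction, that $\sigma:=\sum_{n\geq 1}d_n>\alpha$. Since $\sigma_n\uparrow\sigma$, there is $N$ with $\sigma_N\geq\alpha$, and since $P$ is infinite one may pick $n\in P$ with $n\geq N+1$; then $\sigma_{n-1}\geq\sigma_N\geq\alpha$, so the monotone relation $\beta H\subset\alpha H+K$ (for $0\le\alpha\le\beta$) from \eqref{r-hk} gives $\sigma_{n-1}H\subset\alpha H+K$, whence $A_1\subset A_n+\sigma_{n-1}H+K\subset A_n+\alpha H+K$, contradicting (v). Therefore $\sum_{n\geq 1}d(x_n,x_{n+1})=\sigma\leq\alpha$.

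Finally, since $\sum_{n\geq 1}d(x_n,x_{n+1})\leq\alpha<\infty$, this series converges, so its tails $\sum_{i\geq n}d_i$ tend to $0$; as $d(x_n,x_m)\leq\sum_{i=n}^{m-1}d_i$ whenever $m>n$, the sequence $(x_n)_{n\geq 1}$ is Cauchy, which completes the argument. The proof is mostly a matter of bookkeeping; the points that require attention are the correct matching of each of (i)--(iv) with the appropriate part of Proposition \ref{p-bd} applied to $\mathcal{A}'$ (so that the translation- and subset-invariance of the boundedness notions need not be re-examined here), and, in the inductive estimate, keeping straight the two directions of \eqref{r-hk}: the additive one to assemble $\sigma_nH$ step by step, and the monotone one to pass from $\sigma_{n-1}$ down to $\alpha$ at the end.
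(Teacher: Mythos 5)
Your proposal is correct and follows essentially the same route as the paper: the first implication is obtained by applying Proposition \ref{p-bd} (parts (i), (ii)(a), (ii)(b), (iii), and then (iv) with $A_0:=A_1$) to the family $\{(x_n,A_n)\mid n\geq 1\}$, and the second by telescoping the inclusions $A_n\subset A_{n+1}+d(x_n,x_{n+1})H+K$ via (\ref{r-hk}) and using the infinitude of $P$ to bound the partial sums by $\alpha$. The only cosmetic difference is that you argue the bound $\sum_n d(x_n,x_{n+1})\leq\alpha$ by contradiction on the total sum, whereas the paper directly shows $\mu_n<\alpha$ for every $n$; both are equivalent.
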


\begin{proof}
Assume that (i), or (ii), or (iii), or (iv) holds. Taking $\mathcal{A}%
:=\{(x_{n},A_{n})\mid n\in\mathbb{N}\backslash\{0\}\}$ and using
Proposition \ref{p-bd} (i), (ii)(a), (ii)(b) or (iii), respectively,
we obtain that (\ref{r-bd5}) holds. Applying now Proposition
\ref{p-bd} (iv), we obtain that (v) holds.

Assume that (v) holds. Since $(x_{n+1},A_{n+1})\preceq_{H}^{1}(x_{n},A_{n})$
for $n\geq1$, we have that
\begin{equation}
A_{n}\subset A_{n+1}+d(x_{n},x_{n+1})H+K\quad\forall n\geq1. \label{zv-eq5}%
\end{equation}
Using (\ref{r-hk}) and the inclusions in (\ref{zv-eq5}), we obtain that
\begin{equation}
A_{1}\subset A_{n+1}+\mu_{n}H+K\quad\forall n\geq1, \label{zv-eq6}%
\end{equation}
where $\mu_{n}:=\sum_{l=1}^{n}d(x_{l},x_{l+1})$. Since $A_{1}\not \subset
A_{n+1}+\alpha H+K$ for $n+1\in P$, we have that $\mu_{n}<\alpha$; otherwise
(that is $\mu_{n}\geq\alpha$), from (\ref{zv-eq6}) we get the contradiction
\[
A_{1}\subset A_{n+1}+\mu_{n}H+K\subset A_{n+1}+(\mu_{n}-\alpha)H+\alpha
H+K\subset A_{n+1}+\alpha H+K.
\]
The set $P$ being infinite, $\mu_{n}<\alpha$ for every $n\geq1$, and so
$\sum_{n\geq1}d(x_{n},x_{n+1})=\lim\mu_{n}\leq\alpha$.
\end{proof}

\begin{corollary}
\label{c-bd}Let $\mathcal{A}\subset X\times2^{Y}$ be such that $A\neq
\emptyset$ for every $A\in\Pr_{2^{Y}}(\mathcal{A})$. Then $(\mathcal{A}
,\preceq_{H})$ verifies condition \emph{(Ca)} whenever one of the following
assertions holds:

\emph{(i)} either \emph{(a)} $0\notin\operatorname*{cl}\nolimits_{seq}(H+K)$
and $Y_{\mathcal{A}}$ is quasi $K$--bounded, or \emph{(b)} $0\notin
\operatorname*{cl}(H+K)$ and $Y_{\mathcal{A}}$ is $K$--bounded;

\emph{(ii)} there exists $z^{\ast}\in K^{+}$ such that $\inf z^{\ast
}(Y_{\mathcal{A}})>-\infty$ and $\inf z^{\ast}(H)>0$;

\emph{(iii)} $Y_{\mathcal{A}}$ is $K^{+}$--bounded and condition
(\ref{zv-f5b}) is satisfied;

\emph{(iv) }$H$ is seq-compact, and there exist a bounded set $B\subset Y$ and
a convex cone $C$ such that $K\subset C$, $H\subset\operatorname*{int}C$ and
$B\not \subset A+C$ for every $A\in\Pr_{2^{Y}}(\mathcal{A})$.
\end{corollary}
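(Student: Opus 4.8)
The plan is to deduce Corollary \ref{c-bd} from Proposition \ref{lem4} by checking that, under each of the hypotheses (i)--(iv), every $\preceq_H$--decreasing sequence in $\mathcal{A}$ produces a Cauchy sequence in $X$, which is exactly what condition (Ca) demands. First I would fix an arbitrary $\preceq_H$--decreasing sequence $\big((x_n,A_n)\big)_{n\ge1}\subset\mathcal{A}$. Since $A\ne\emptyset$ for every $A\in\Pr_{2^Y}(\mathcal{A})$, in particular $A_1\ne\emptyset$, so the standing hypothesis of Proposition \ref{lem4} is met. Set $E:=\bigcup_{n\ge1}A_n$; then $E\subseteq Y_{\mathcal{A}}$, hence $E$ inherits from $Y_{\mathcal{A}}$ all the relevant boundedness properties (quasi $K$--boundedness, $K$--boundedness, $K^+$--boundedness), and the scalar infimum bounds only improve: $\inf z^{\ast}(E)\ge\inf z^{\ast}(Y_{\mathcal{A}})$ for any $z^\ast\in K^+$.

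Next I would match each case of the corollary to the corresponding case of Proposition \ref{lem4}. Cases (i), (ii), (iii) transfer verbatim once $E\subseteq Y_{\mathcal{A}}$ is used as above; note that the condition ``$\inf z^{\ast}(H)>0$'' and condition (\ref{zv-f5b}) involve only $H$, not $\mathcal{A}$, so they carry over unchanged. For case (iv), the only point to observe is that $A_n\in\Pr_{2^Y}(\mathcal{A})$ for every $n\ge1$, so the assumption $B\not\subset A+C$ for all $A\in\Pr_{2^Y}(\mathcal{A})$ gives in particular $B\not\subset A_n+C$ for all $n\ge1$, which is precisely hypothesis (iv) of Proposition \ref{lem4}. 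In each case Proposition \ref{lem4} then yields the implication $[(\text{i})\vee(\text{ii})\vee(\text{iii})\vee(\text{iv})]\Rightarrow(\text{v})\Rightarrow\sum_{n\ge1}d(x_n,x_{n+1})<\infty$, and the convergence of this series forces $(x_n)_{n\ge1}$ to be Cauchy.

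Since the $\preceq_H$--decreasing sequence was arbitrary, this establishes condition (Ca) for $(\mathcal{A},\preceq_H)$ under any one of (i)--(iv), completing the proof. There is essentially no obstacle here: the corollary is a bookkeeping consequence of the proposition, the only thing to be careful about being the passage from $Y_{\mathcal{A}}$ (and from the index set $\{A\in\Pr_{2^Y}(\mathcal{A})\}$) to the set $E$ (and the subfamily $\{A_n\}$) attached to a given decreasing sequence, together with the remark that the hypotheses on $H$ alone are untouched by this passage. If one wanted to be fully explicit one could also note that Proposition \ref{lem4} is applied with $\mathcal{A}$ there taken to be $\{(x_n,A_n)\mid n\in\mathbb{N}\setminus\{0\}\}$, exactly as in its own proof, so that ``$Y$'' of that smaller family is $E$.
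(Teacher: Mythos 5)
Your proposal is correct and follows essentially the same route as the paper, which simply observes that the corollary is an immediate consequence of Propositions \ref{p-bd} and \ref{lem4}; your only (harmless) variation is to invoke Proposition \ref{lem4} directly on the family $\{(x_n,A_n)\}$ and transfer the boundedness hypotheses from $Y_{\mathcal{A}}$ to $E=\cup_{n\ge1}A_n$ via the inclusion $E\subseteq Y_{\mathcal{A}}$, which is exactly what the proof of Proposition \ref{lem4} does internally anyway.
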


\begin{proof}
It is an immediate consequence of Propositions \ref{p-bd} and \ref{lem4}.
\end{proof}

\medskip

In the next section we shall use also cs-complete sets. Recall that a set
$E\subseteq Y$ is \emph{cs-complete}
(see \cite[p.\ 9]{Zal:02}) if for all sequences $(\lambda_{n})_{n\geq
1}\subseteq\mathbb{R}_{+}$, $(y_{n})_{n\geq1}\subseteq E$ such that
$\sum_{n\geq1}\lambda_{n}=1$ and the sequence $\left(  \sum_{m=1}^{n}%
\lambda_{m}y_{m}\right)  _{n\geq1}$ is Cauchy, the series $\sum_{n\geq
1}\lambda_{n}y_{n}$ (called convex series with elements of $E$) is convergent
and its sum belongs to $E$. One says that $E\subseteq Y$ is \emph{cs-closed}
if the sum of any convergent convex series with elements from $E$ belongs to
$E$. Of course, any cs-complete set is cs-closed; if $Y$ is (sequentially)
complete then the converse is true. Notice also that any cs-closed set is
convex. Moreover, it is worth observing that the closed convex subsets of
topological vector spaces are cs-closed, as well as the open convex subsets of
separated locally convex spaces; furthermore, all the convex subsets of finite
dimensional normed spaces are cs-closed (hence cs-complete).

Notice that $E\subset Y$ is cs-complete if and only if for all sequences
$(\lambda_{n})_{n\geq1}\subseteq\mathbb{R}_{+}$, $(y_{n})_{n\geq1}\subseteq E$
such that $\lambda:=\sum_{n\geq1}\lambda_{n}\in\mathbb{R}_{+}^{\ast}$ and the
sequence $\left(  \sum_{m=1}^{n}\lambda_{m}y_{m}\right)  _{n\geq1}$ is Cauchy,
the series $\sum_{n\geq1}\lambda_{n}y_{n}$ converges to some $y\in Y$ and
$\lambda^{-1}y\in E$. Similarly, $E$ is cs-closed if and only if for all
sequences $(\lambda_{n})_{n\geq1}\subseteq\mathbb{R}_{+}$, $(y_{n})_{n\geq
1}\subseteq E$ such that $\lambda:=\sum_{n\geq1}\lambda_{n}\in\mathbb{R}%
_{+}^{\ast}$ and $y:=\sum_{n\geq1}\lambda_{n}y_{n}\in Y$, one has
$\lambda^{-1}y\in E$. Note that in \cite[Def.\ 2.1.4]{PerBon:87} it is said
that $E$ is $\sigma$-convex or cs-compact if any convex series with elements
from $E$ is convergent and its sum belongs to $E$. It is quite easy to prove
that $E\subset Y$ is $\sigma$-convex if and only if $E$ is cs-complete and
bounded; this assertion is formulated in \cite[p.~19]{Qiu:16b} (see also
\cite[p.~921]{Qiu:14}, where it is asserted that $E$ is $\sigma$-convex if and
only if $E$ is cs-complete).

\begin{proposition}
\label{p-csc}Assume that $E\subset Y$ is a nonempty seq-compact convex set.
Then $E$ is cs-complete.
\end{proposition}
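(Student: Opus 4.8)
The plan is to verify the cs-completeness of $E$ directly from the definition, exploiting seq-compactness to produce the limit of a Cauchy convex-series partial sum sequence, and convexity (via the convex-series characterization of cs-closedness) to place that limit back in $E$. Concretely, let $(\lambda_n)_{n\geq 1}\subseteq\mathbb{R}_+$ and $(y_n)_{n\geq 1}\subseteq E$ be such that $\lambda:=\sum_{n\geq 1}\lambda_n\in\mathbb{R}_+^\ast$ and the partial sums $s_N:=\sum_{m=1}^N\lambda_m y_m$ form a Cauchy sequence; using the equivalent formulation of cs-completeness recalled just before the statement, it suffices to show $s_N$ converges to some $y\in Y$ with $\lambda^{-1}y\in E$.

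First I would handle the degenerate indices: discard those $n$ with $\lambda_n=0$ (they contribute nothing to the series and do not affect the partial sums), so that without loss of generality $\lambda_n>0$ for all $n$ and the series still has infinitely many terms or finitely many — if only finitely many terms remain, $s_N$ is eventually constant, its limit is a (finite) convex combination of points of the convex set $E$ scaled by $\lambda$, and we are done. So assume infinitely many positive $\lambda_n$. The next step is the key one: I want to extract the limit of $(s_N)$. The sequence $(s_N)$ is Cauchy in $Y$, but $Y$ need not be complete, so I cannot simply invoke completeness; instead I pass to normalized partial sums. Set $\Lambda_N:=\sum_{m=1}^N\lambda_m$, so $\Lambda_N\uparrow\lambda\in(0,\infty)$, and consider $t_N:=\Lambda_N^{-1}s_N=\sum_{m=1}^N(\lambda_m/\Lambda_N)y_m$, which is an honest convex combination of $y_1,\dots,y_N\in E$, hence $t_N\in E$ by convexity of $E$. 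Since $\Lambda_N\to\lambda>0$ and $(s_N)$ is Cauchy (in particular bounded — a Cauchy sequence in a topological vector space is bounded), one checks that $(t_N)$ is Cauchy as well: indeed $t_N-t_M = \Lambda_N^{-1}s_N-\Lambda_M^{-1}s_M$ and writing this as $\Lambda_N^{-1}(s_N-s_M)+(\Lambda_N^{-1}-\Lambda_M^{-1})s_M$, the first term goes to $0$ because $(s_N)$ is Cauchy and $\Lambda_N^{-1}$ is bounded, while the second term goes to $0$ because $\Lambda_N^{-1}-\Lambda_M^{-1}\to 0$ and $(s_M)$ is bounded.

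Now invoke seq-compactness of $E$: the sequence $(t_N)\subseteq E$ has a subsequence $t_{N_k}\to t\in E$. But a Cauchy sequence with a convergent subsequence converges (to the same limit), so $t_N\to t\in E$. Consequently $s_N=\Lambda_N t_N\to \lambda t$ since $\Lambda_N\to\lambda$ and $t_N\to t$ and scalar multiplication is continuous. Thus the series $\sum_{n\geq 1}\lambda_n y_n$ converges, with sum $y:=\lambda t$, and $\lambda^{-1}y=t\in E$. This is exactly the cs-completeness condition in its normalized form, so $E$ is cs-complete.

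I expect the only mildly delicate point to be the Cauchy-ness of the normalized sequence $(t_N)$ — one must be a little careful that $\Lambda_N$ is bounded away from $0$ (true because $\Lambda_N$ increases to $\lambda>0$, so $\Lambda_N\geq\Lambda_1=\lambda_1>0$) and that a Cauchy sequence in a topological vector space is bounded (standard). Everything else is routine manipulation plus the two elementary facts "Cauchy + convergent subsequence $\Rightarrow$ convergent" and continuity of the vector-space operations. No metrizability of $Y$ is needed since seq-compactness is exactly the sequential hypothesis we use.
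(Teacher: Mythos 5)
Your proof is correct and follows essentially the same route as the paper's: normalize the Cauchy partial sums $s_N$ by $\Lambda_N^{-1}$ to obtain a sequence in $E$, extract a convergent subsequence by seq-compactness, and then use ``Cauchy plus convergent subsequence implies convergent'' to recover the limit of $s_N$. The only (harmless) difference is that the paper does not use the Cauchy hypothesis at all but derives Cauchyness of the partial sums from the boundedness of $E$ (thereby in fact proving $\sigma$-convexity), whereas you work directly from the definition of cs-completeness and handle the zero coefficients explicitly.
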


\begin{proof}
Since $E$ is seq-compact, $E$ is bounded. Take $(\lambda_{n})_{n\geq1}%
\subset\mathbb{R}_{+}^{\ast}$ with $\sum_{n\geq1}\lambda_{n}=1$ and
$(v_{n})_{n\geq1}\subset E$. For this, fix $V\subseteq Y$ a balanced
neighborhood of $0$. Because $E$ is bounded, there exists $\alpha>0$ such that
$E\subseteq\alpha V$. Since the series $\sum_{n\geq1}\lambda_{n}$ is
convergent there exists $n_{0}\geq1$ such that $\sum_{k=n}^{n+p}\lambda
_{k}\leq\alpha^{-1}$ for all $n,p\in\mathbb{N}$ with $n\geq n_{0}$. Since $E$
is convex, for such $n,p$ there exists $v_{n,p}\in E$ with%
\[
\sum\nolimits_{k=n}^{n+p}\lambda_{k}v_{k}=\left(  \sum\nolimits_{k=n}%
^{n+p}\lambda_{k}\right)  v_{n,p}\in\lbrack0,\alpha^{-1}]E\subseteq
\lbrack0,\alpha^{-1}]\alpha V=V;
\]
hence $(y_{n})_{n\geq1}$ is a Cauchy sequence, where $y_{n}:=\sum_{m=1}%
^{n}\lambda_{m}v_{m}$ for $n\geq1$. Set also $\mu_{n}:=\sum_{m=1}^{n}%
\lambda_{m}\in\lbrack0,1]$. Since $\mu_{n}\rightarrow1$, we may (and do)
assume that $\mu_{n}>0$ for $n\geq1$. Since $E$ is convex, $v_{n}^{\prime
}:=\mu_{n}^{-1}y_{n}\in E$, while because $E$ is seq-compact, $(v_{n}^{\prime
})$ has a subnet converging to $v\in E$. Because $\mu_{n}\rightarrow1$, the
corresponding subnet of $(y_{n})$ converges to $v$. Since $(y_{n})$ is a
Cauchy sequence, $y_{n}\rightarrow v$. Hence $E$ is cs-complete.
\end{proof}

\medskip

Clearly, the converse of the preceding result is not true in general; for
example, any nonempty open convex subset of a Banach space is cs-complete
(but, clearly, is not compact).


\section{Ekeland's Variational Principles of Ha--Hamel--L\"{o}hne's Type}


Throughout this section $X$, $Y$, $K$, $F$ are as in Section \ref{sec-vz-10.1}
(if not stated otherwise explicitly), that is, $(X,d)$ is a metric space, $Y$
is a separated topological vector space, $K\subset Y$ is a proper convex cone,
and $F:X\times X\rightrightarrows K$ verifies conditions (F1) and (F2). On
$X\times2^{Y}$ we consider the preorder $\preceq_{F}$, as well as
$\preceq_{F,z^{\ast}}$ for $z^{\ast}\in K^{+}$, defined in (\ref{zv-eq1-b})
and (\ref{vz-eq5}), respectively. Taking into account that $(x,A)\preceq
_{F}(x^{\prime},\emptyset)$ for all $x,x^{\prime}\in X$ and $A\in2^{Y}$, in
the sequel we assume that

\begin{description}
\item[(H)] $\mathcal{A}\subseteq X\times2^{Y}$ is nonempty and $A\neq
\emptyset$ for every $A\in{\Pr}_{2^{Y}}(\mathcal{A})$.
\end{description}

Hence
\[
Y_{\mathcal{A}}:=\bigcup\big\{A\mid A\in{\Pr}_{2^{Y}}(\mathcal{A}%
)\big\}\neq\emptyset.
\]
An important example of set $\mathcal{A}\subseteq X\times2^{Y}$ which will be
considered often in the sequel is
\[
\mathcal{A}_{\Gamma}:=\big\{\big((x,\Gamma(x))\big)\mid x\in
\operatorname*{dom}\Gamma\big\},
\]
where $\Gamma:X\rightrightarrows Y$ with $\operatorname*{dom}\Gamma
\neq\emptyset;$ of course, $Y_{\mathcal{A}_{\Gamma}}=\Gamma(X)$. Ha
\cite{Ha05} established an EVP type result on a complete metric space $X$ for
a set-valued function $\Gamma:X\rightrightarrows Y$ which uses the relation
$\leq_{K}^{l}$ (see Section \ref{sec-vz-10.1}) for comparing the values of
$\Gamma$; Hamel \cite{Ham:05} and Hamel--L\"{o}hne \cite{HamLoh:06}
established more general results for subsets $\mathcal{A}\subseteq
X\times2^{Y}$ even for a uniform space $X$. Corollary \ref{zv-t4-Ha} below is
an extension of this type of results.

The first part of the next result is a translation of Theorem \ref{t-hz} to
the present context. For getting the second part we use Lemma \ref{lem-fi} and
Theorem \ref{ex-BB}.

\begin{theorem}
\label{c-hzH}Assume that $(\mathcal{A},\preceq_{F})$ verifies condition
\emph{(C0)}. Then:

\emph{(i)} for every $(x,A)\in\mathcal{A}$ there exists $(\overline
{x},\overline{A})\in\mathcal{A}$ such that $(\overline{x},\overline{A}
)\preceq_{F}(x,A)$ and $\mathcal{A}\ni(x^{\prime},A^{\prime})\preceq
_{F}(\overline{x},\overline{A})$ implies $x^{\prime}=\overline{x};$

\emph{(ii)} assume that $z^{\ast}\in K^{+}$ is such that $\inf
z^{\ast }(F(x,x^{\prime}))>0$ for $x,x^{\prime}\in X$ with $x\neq
x^{\prime}$ and $\inf z^{\ast }(A)>-\infty$ for
$A\in\Pr_{2^{Y}}(\mathcal{A})$; then for every $(x,A)\in\mathcal{A}$
there exists $(\overline{x},\overline{A} )\in\mathcal{A}$ minimal
wrt $\preceq_{F,z^{\ast}}$ such that $(\overline
{x},\overline{A})\preceq_{F,z^{\ast}}(x,A)$ and $\mathcal{A}
\ni(x^{\prime },A^{\prime})\preceq_{F}(\overline{x},\overline{A})$
implies $x^{\prime }=\overline{x}$.
\end{theorem}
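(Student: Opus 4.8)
The plan is to read part~(i) directly off Theorem~\ref{t-hz}, and to obtain part~(ii) by scalarizing with $\phi(x,A):=\inf z^{\ast}(A)$ and feeding this into Lemma~\ref{lem-fi} together with the Brezis--Browder principle. For part~(i): the relation $\preceq_{F}$ is a preorder on $\mathcal{A}\subseteq X\times 2^{Y}$ (reflexivity from (F1), transitivity from (F2), both checked in Section~\ref{sec-vz-10.1}), and $(\mathcal{A},\preceq_{F})$ satisfies (C0) by hypothesis, so I would simply invoke Theorem~\ref{t-hz} with $Z:=2^{Y}$. This immediately gives, for each $(x,A)\in\mathcal{A}$, a pair $(\overline{x},\overline{A})\in\mathcal{A}$ with $(\overline{x},\overline{A})\preceq_{F}(x,A)$ and $\mathcal{A}\ni(x',A')\preceq_{F}(\overline{x},\overline{A})\Rightarrow x'=\overline{x}$, which is exactly~(i).

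\emph{Part~(ii).} First I set $\phi\colon\mathcal{A}\to\overline{\mathbb{R}}$, $\phi(x,A):=\inf z^{\ast}(A)$, and note that $\phi$ is real-valued on $\mathcal{A}$: by~(H) each $A\in\Pr\nolimits_{2^{Y}}(\mathcal{A})$ is nonempty (so $\phi<+\infty$), and $\inf z^{\ast}(A)>-\infty$ by hypothesis. Next I check that $\phi$ is $\preceq_{F}$--increasing: if $(x_{1},A_{1})\preceq_{F}(x_{2},A_{2})$, then $A_{2}\subseteq A_{1}+F(x_{1},x_{2})+K$, and applying $z^{\ast}$ and passing to infima (using $z^{\ast}(F(x_{1},x_{2}))\subseteq[0,\infty)$ and $\inf z^{\ast}(K)=0$, just as in the computation leading to~\eqref{zv-eq3b}) yields $\phi(x_{2},A_{2})\geq\phi(x_{1},A_{1})+\inf z^{\ast}(F(x_{1},x_{2}))\geq\phi(x_{1},A_{1})$. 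Comparing definitions~\eqref{vz-eq5} and~\eqref{r-lefi} shows that $\preceq_{F,z^{\ast}}$ is exactly the relation $(\preceq_{F})_{\phi}$ of Lemma~\ref{lem-fi}. Since (C0) implies (Cb), i.e.\ (Ab) for $(\mathcal{A},\preceq_{F})$, Lemma~\ref{lem-fi}(i) and~(iii) give that $\preceq_{F,z^{\ast}}$ is a partial order on $\mathcal{A}$, that $\phi$ is \emph{strictly} $\preceq_{F,z^{\ast}}$--increasing, and that $(\mathcal{A},\preceq_{F,z^{\ast}})$ satisfies (Ab). Now I apply the Brezis--Browder principle to $(\mathcal{A},\preceq_{F,z^{\ast}})$: either Theorem~\ref{ex-eBB} with $\phi$ directly, or Theorem~\ref{ex-BB} with the bounded surrogate $\varphi\circ\phi$, $\varphi(t):=t/(1+|t|)$, which has the same associated order $(\preceq_{F})_{\varphi\circ\phi}=\preceq_{F,z^{\ast}}$ and is still strictly $\preceq_{F,z^{\ast}}$--increasing. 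As $\preceq_{F,z^{\ast}}$ is reflexive, $\operatorname*{dom}S_{\preceq_{F,z^{\ast}}}=\mathcal{A}$, so for every $(x,A)\in\mathcal{A}$ this produces $(\overline{x},\overline{A})\in\mathcal{A}$ with $(\overline{x},\overline{A})\preceq_{F,z^{\ast}}(x,A)$ and $S_{\preceq_{F,z^{\ast}}}\big((\overline{x},\overline{A})\big)\subseteq\{(\overline{x},\overline{A})\}$, i.e.\ $(\overline{x},\overline{A})$ is minimal with respect to $\preceq_{F,z^{\ast}}$.

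\emph{First coordinate, and the main obstacle.} It remains to show that $\mathcal{A}\ni(x',A')\preceq_{F}(\overline{x},\overline{A})$ forces $x'=\overline{x}$. Monotonicity of $\phi$ gives $\inf z^{\ast}(A')\leq\inf z^{\ast}(\overline{A})$, both finite. A strict inequality would, by~\eqref{vz-eq5}, yield $(x',A')\preceq_{F,z^{\ast}}(\overline{x},\overline{A})$ with $(x',A')\neq(\overline{x},\overline{A})$, contradicting minimality; hence $\inf z^{\ast}(A')=\inf z^{\ast}(\overline{A})$. From $\overline{A}\subseteq A'+F(x',\overline{x})+K$ (with $F(x',\overline{x})\neq\emptyset$, for otherwise $\overline{A}=\emptyset$) the same scalar estimate gives $\inf z^{\ast}(\overline{A})\geq\inf z^{\ast}(A')+\inf z^{\ast}(F(x',\overline{x}))$, so $\inf z^{\ast}(F(x',\overline{x}))\leq0$, which by the standing assumption on $z^{\ast}$ forces $x'=\overline{x}$. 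I do not expect a deep obstacle: the proof is an assembly of Theorems~\ref{t-hz}, \ref{ex-BB}, \ref{ex-eBB} and Lemma~\ref{lem-fi}. The points needing care are (a) that $\phi$ is only strictly monotone for $\preceq_{F,z^{\ast}}$, not for $\preceq_{F}$, which is exactly why passing through Lemma~\ref{lem-fi} (and transferring (Ab) via Lemma~\ref{lem-fi}(iii)) is indispensable, and (b) the bookkeeping with the scalar infima outside the setting of~\eqref{zv-f3}, precisely where the hypotheses ``$\inf z^{\ast}(A)>-\infty$ on $\Pr\nolimits_{2^{Y}}(\mathcal{A})$'' (making $\phi$ real-valued) and ``$\inf z^{\ast}(F(x,x'))>0$ for $x\neq x'$'' (propagating equality of $\phi$-values to equality of first coordinates) enter.
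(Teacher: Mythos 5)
Your proposal is correct and follows essentially the same route as the paper's: part (i) is read off Theorem~\ref{t-hz}, and part (ii) scalarizes with $\phi(x,A)=\inf z^{\ast}(A)$, passes to $\preceq_{F,z^{\ast}}$ via Lemma~\ref{lem-fi} together with (C0)$\Rightarrow$(Ab), applies the Brezis--Browder principle, and finishes with the same scalar estimate $\inf z^{\ast}(\overline{A})\geq\inf z^{\ast}(A^{\prime})+\inf z^{\ast}(F(x^{\prime},\overline{x}))$ to force $x^{\prime}=\overline{x}$. Your extra care in invoking Theorem~\ref{ex-eBB} (or the bounded surrogate $\varphi\circ\phi$) rather than Theorem~\ref{ex-BB} directly is a sensible refinement, since the hypothesis only makes $\phi$ real-valued, not uniformly bounded below on $\mathcal{A}$.
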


\begin{proof}
(ii) Take $z^{\ast}\in K^{+}$ provided by the hypothesis. It is clear that
$(W,\preccurlyeq):=(\mathcal{A},\preceq_{F})$ is a preordered set. Moreover,
using the estimate in (\ref{zv-eq3b}) we obtain that
\[
\phi:\mathcal{A}\rightarrow\overline{\mathbb{R}},\quad\phi(x,A):=\inf z^{\ast
}(A),
\]
is $\preceq_{F}$--increasing and real-valued. Clearly, $\preccurlyeq_{\phi}$
from (\ref{r-lefi}) is nothing else than $\preceq_{F,z^{\ast}}$. Because
$(\mathcal{A},\preceq_{F})$ verifies condition (C0), using Proposition
\ref{rel-c} and Lemma \ref{lem-fi} (iii), $(W,\preccurlyeq_{\phi})$ verifies
condition (Ab).

Consider $(x,A)\in\mathcal{A}$ $(=W);$ since $\phi$ is strictly $\preccurlyeq
_{\phi}$--increasing, applying Theorem \ref{ex-BB}~(ii) for $(W,\preccurlyeq
_{\phi})$, we get $(\overline{x},\overline{A})\in\mathcal{A}$ minimal wrt
$\preceq_{\phi}=\preceq_{F,z^{\ast}}$ such that $(\overline{x},\overline
{A})\preceq_{F,z^{\ast}}(x,A)$. Take $\mathcal{A}\ni(x^{\prime},A^{\prime
})\preceq_{F}(\overline{x},\overline{A})$, that is $W\ni(x^{\prime},A^{\prime
})\preceq_{\phi}(\overline{x},\overline{A})$. Using Lemma \ref{lem-fi} (ii) we
obtain that $\inf z^{\ast}(A^{\prime})=\inf z^{\ast}(\overline{A})$. Assuming
that $x^{\prime}\neq\overline{x}$, we get the contradiction $\inf z^{\ast
}(\overline{A})\geq\inf z^{\ast}(A^{\prime})+\inf z^{\ast}\left(  F(x^{\prime
},\overline{x})\right)  >\inf z^{\ast}(A^{\prime})$ using (\ref{zv-eq3}). The
proof is complete.
\end{proof}

\begin{remark}
\label{zv-rem-rel2}{\rm Note that, for having the conclusions of
Theorem \ref{c-hzH} (i) or (ii) only for a given $(x,
A)\in\mathcal{A}$, it is sufficient to assume that (C0) is verified
by the sets
\begin{align*}
\mathcal{A}_{F}(x, A)  &  :=\{(x^{\prime}, A^{\prime})\in\mathcal{A}%
\mid(x^{\prime},A^{\prime})\preceq_{F}(x, A)\},\\
\mathcal{A}_{F,z^{\ast}}(x, A)  &  :=\{(x^{\prime}, A^{\prime})\in
\mathcal{A}\mid(x^{\prime}, A^{\prime})\preceq_{F,z^{\ast}}(x, A)\},
\end{align*}
respectively.}
\end{remark}

\begin{remark}
\label{rem-HL}{\rm Taking
$F(x,x^{\prime}):=\{d(x,x^{\prime})k^{0}\}$ with $k^{0}\in
K\backslash(-\operatorname*{cl}K)$ in Theorem \ref{c-hzH} (using
also Remark \ref{zv-rem-rel2}) one obtains \cite[Th.\
5.1]{HamLoh:06} in the case $X$ is a metric space. Indeed, on
$\mathcal{A}_{0}$, if (M2) or (M2') is verified, then (i) or (iv)
from Proposition \ref{lem4} holds, respectively, and so any
$\preceq_{F}$--decreasing sequence in $\mathcal{A}_{0}$ is Cauchy.
This together with (M3) shows that (C0) holds. In a similar way
\cite[Th.\ 6.1]{HamLoh:06} can be obtained.}
\end{remark}

In the sequel, the conditions (Cx) or (Cxx) will refer to $(\mathcal{A}%
,\preceq_{F})$. Note that condition (C'1) for $(\mathcal{A},\preceq_{F})$
corresponds to condition (H1) in \cite{TamZal:11} and \cite{KhaTamZal:15b}.

\begin{remark}
\label{rem-dyn2}{\rm Taking $X$, $Y$, $K$, $H$ and $\Gamma
:X\rightrightarrows Y$ defined in Example \ref{ex1} below, $(\mathcal{A}%
_{\Gamma},\preceq_{F})$ verifies conditions (C'1) and (C1) for $F:=F_{H}$, but
not (C0). Moreover, the conclusion of Theorem \ref{c-hzH}~(i) does not hold.
This shows that, in order to have the conclusion of Theorem \ref{c-hzH} we
need supplementary conditions besides (C'1) or (C1).}
\end{remark}

\begin{example}
\label{ex1}Let $X:=\mathbb{R}$ and $Y:=\mathbb{R}^{2}$ be endowed with their
usual norms, $K:=\mathbb{R}\times\mathbb{R}_{+}$, $H:=\{(y_{1},y_{2}
)\in\mathbb{R}_{+}^{2}\mid y_{1}y_{2}\geq1\}$, and $\Gamma:X\rightrightarrows
Y$, $\Gamma(x):=\{(x,e^{x})\}$. It is clear that $H$ is a closed convex subset
of $K\backslash\{(0,0)\}$ and $K+\varepsilon H=\operatorname*{int}
K=\mathbb{R}\times\mathbb{R}_{+}^{\ast}$ for $\varepsilon>0$, where
$\mathbb{R}_{+}^{\ast}:=\mathbb{R}_{+}\backslash\{0\}$. One has $\Gamma
(x)+K=\mathbb{R}\times\lbrack e^{x},\infty)$, and so, for $x,x^{\prime}\in X$
and $\alpha>0$,
\begin{equation}
\Gamma(x)\preceq_{H}\Gamma(x^{\prime})\Leftrightarrow x\leq x^{\prime
}\Leftrightarrow\Gamma(x)\leq_{K}^{l}\Gamma(x^{\prime}),\quad\Gamma(x^{\prime
})\subset\Gamma(x)+\alpha H+K\Leftrightarrow x<x^{\prime}; \label{r-ex1}%
\end{equation}
moreover, $\Gamma(X)=\{(x,e^{x})\mid x\in\mathbb{R}\}\subset K$, which shows
that $\Gamma(X)$ is (vector) $K$--bounded. So, for the sequence $(x_{n}%
)_{n\geq1}\subset X$ with $x_{n}\rightarrow x$ and $\Gamma(x_{n})\subset
\Gamma(x_{n+1})+K$ (that is $\Gamma(x_{n+1})\leq_{K}^{l}\Gamma(x_{n})$) for
$n\geq1$, we have that $x_{n+1}\leq x_{n}$, and so $x\leq x_{n}$ for $n\geq1$,
whence $\Gamma(x_{n})\subset\Gamma(x)+K$ for $n\geq1$. This shows that
\emph{(C'1)} and \emph{(C1)} are verified; however, taking $x_{n}:=-n$ (for
$n\geq1$) it is clear that \emph{(C0)} is not verified.
\end{example}

\begin{remark}
\label{p-dyn}{\rm Let $\Gamma:X\rightrightarrows Y$ have nonempty
domain. Set $x\preceq u$ if $(x,\Gamma(x))\preceq_{F}(u,\Gamma(u))$.
Using Remark \ref{zv-rem7}, $(\mathcal{A}_{\Gamma},\preceq_{F})$
verifies condition (C'1) if and only if $S(u):=\{x\in X\mid x\preceq
u\}$ is $\preceq$--lower closed for every $u\in X$, if and only if
$(X,d,\preceq)$ verifies (A'1). This shows that condition (C'1)
extends the dynamic closedness of a set-valued mapping as
defined in \cite{Qiu:14} and elsewhere. Also notice that $(\mathcal{A}%
_{\Gamma},\preceq_{F})$ verifies condition (Ca) if and only if $(X,d,\preceq)$
verifies (Aa).}
\end{remark}

\begin{lemma}
\label{lem5}Assume that $F$ verifies condition \emph{(F3)}, and $z_{F}^{\ast
}(Y_{\mathcal{A}})$ is bounded from below, where $z_{F}^{\ast}$ is provided by
\emph{(F3)}. Then $(\mathcal{A},\preceq_{F})$ verifies condition \emph{(Ca)}.
\end{lemma}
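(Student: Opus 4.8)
The plan is to control the scalar sequence $\bigl(\inf z_F^{\ast}(A_n)\bigr)_{n\ge1}$ along a $\preceq_F$--decreasing sequence and to read off the Cauchy property of $(x_n)$ from it. Fix $\alpha\in\mathbb{R}$ with $z_F^{\ast}(Y_{\mathcal A})\subseteq[\alpha,\infty)$, and let $\bigl((x_n,A_n)\bigr)_{n\ge1}\subseteq\mathcal A$ be $\preceq_F$--decreasing. By assumption (H) each $A_n$ is nonempty, so $\inf z_F^{\ast}(A_n)\in[\alpha,\infty)\subseteq\mathbb{R}$; moreover, by transitivity $(x_n,A_n)\preceq_F(x_m,A_m)$ for $m\le n$, which by \eqref{zv-eq1-b} means $A_m\subseteq A_n+F(x_n,x_m)+K$ and (since $A_m\neq\emptyset$) forces $F(x_n,x_m)\neq\emptyset$, so $\inf z_F^{\ast}\bigl(F(x_n,x_m)\bigr)\in[0,\infty)$.

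The key estimate is the one already used in \eqref{zv-eq3b}: applying $z_F^{\ast}$ to $A_m\subseteq A_n+F(x_n,x_m)+K$ and using $\inf z_F^{\ast}(K)=0$ gives, for all $m\le n$,
\[
\inf z_F^{\ast}(A_m)\ \ge\ \inf z_F^{\ast}(A_n)+\inf z_F^{\ast}\bigl(F(x_n,x_m)\bigr)\ \ge\ \inf z_F^{\ast}(A_n).
\]
Thus $\bigl(\inf z_F^{\ast}(A_n)\bigr)_{n\ge1}$ is nonincreasing and bounded below by $\alpha$, hence convergent in $\mathbb{R}$, hence a Cauchy sequence. On the other hand, whenever $x_m\neq x_n$ and $\delta:=d(x_m,x_n)>0$, the definition \eqref{zv-fdelta} of $F_\delta$ yields $F(x_n,x_m)\subseteq F_\delta$, so condition (F3) gives $\inf z_F^{\ast}\bigl(F(x_n,x_m)\bigr)\ge\eta(\delta)>0$.

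To finish, I would show directly that $(x_n)$ is Cauchy. Given $\varepsilon>0$, set $\eta_0:=\eta(\varepsilon)>0$ (by (F3)) and choose $N$ such that $\inf z_F^{\ast}(A_m)-\inf z_F^{\ast}(A_n)<\eta_0$ for all $n\ge m\ge N$. If some such pair satisfied $d(x_m,x_n)\ge\varepsilon$, then combining the monotonicity of $\eta$ (i.e.\ $\eta(\delta')\le\eta(\delta)$ for $0<\delta'\le\delta$, noted after \eqref{zv-fdelta}) with the key estimate would give $\eta_0=\eta(\varepsilon)\le\eta\bigl(d(x_m,x_n)\bigr)\le\inf z_F^{\ast}\bigl(F(x_n,x_m)\bigr)\le\inf z_F^{\ast}(A_m)-\inf z_F^{\ast}(A_n)<\eta_0$, a contradiction; hence $d(x_m,x_n)<\varepsilon$ for all $n\ge m\ge N$, i.e.\ $(x_n)_{n\ge1}$ is Cauchy, which is exactly (Ca). An equivalent route is to note that the key estimate telescopes to $\sum_{k\ge1}\inf z_F^{\ast}\bigl(F(x_{k+1},x_k)\bigr)\le\inf z_F^{\ast}(A_1)-\alpha<\infty$, deduce $d(x_n,x_{n+1})\to0$ from (F3) and the monotonicity of $\eta$, and then apply Lemma \ref{lem2}.

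The genuinely routine parts are the three elementary observations $\inf z_F^{\ast}(K)=0$, superadditivity of $\inf$ under Minkowski sums of images, and $F(x_n,x_m)\subseteq F_\delta$ with $\delta=d(x_m,x_n)$. The one step that needs care — and the only place where hypothesis (F3) is actually used — is the last implication, converting smallness of $\inf z_F^{\ast}\bigl(F(x_n,x_m)\bigr)$ (equivalently of $\eta\bigl(d(x_m,x_n)\bigr)$) into smallness of $d(x_m,x_n)$; this rests precisely on $\eta$ being positive on $(0,\infty)$ and nondecreasing.
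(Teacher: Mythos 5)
Your proof is correct and follows essentially the same route as the paper: the monotone, bounded scalar sequence $\bigl(\inf z_F^{\ast}(A_n)\bigr)$, the estimate from \eqref{zv-eq3b}, and the conversion of smallness of $\eta\bigl(d(\cdot,\cdot)\bigr)$ into smallness of the distance via (F3) and the monotonicity of $\eta$. The only cosmetic difference is that you establish the Cauchy property directly with a two-index argument, whereas the paper shows $d(x_n,x_{n+1})\to 0$ and invokes Lemma \ref{lem2} — a route you also note as your "equivalent" alternative.
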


\begin{proof}
Consider a $\preceq_{F}$--decreasing sequence $\left(  (x_{n},A_{n})\right)
_{n\geq1}\subset\mathcal{A}$. Setting $\gamma_{n}:=\inf z^{\ast}(A_{n})$
$(\in\mathbb{R})$, then clearly the sequence $\left(  \gamma_{n}\right)
_{n\geq1}\subset\mathbb{R}$ is decreasing and bounded, and so $\gamma
:=\lim_{n\rightarrow\infty}\gamma_{n}\in\mathbb{R}$. Using (\ref{zv-eq3b}), we
get
\[
\gamma_{n}-\gamma_{n+1}=\inf z^{\ast}(A_{n})-\inf z^{\ast}(A_{n+1})\geq\inf
z^{\ast}\left(  F(x_{n+1},x_{n})\right)  \geq\eta\left(  d(x_{n}%
,x_{n+1})\right)  \quad\forall n\geq1.
\]
It follows that $\eta\left(  d(x_{n},x_{n+1})\right)  \rightarrow0$, and so
$d(x_{n},x_{n+1})\rightarrow0$ because $\eta:\mathbb{R}_{+}\rightarrow
\overline{\mathbb{R}}_{+}$ is increasing. Therefore, $(x_{n})_{n\geq1}$ is
asymptotic, and so, by Lemma \ref{lem2}, (Ca) is verified.
\end{proof}

\begin{theorem}
\label{zv-t2-bh}Assume that the following two conditions hold:

\emph{(i)} $F$ verifies condition \emph{(F3)},

\emph{(ii)} $(\mathcal{A},\preceq_{F})$ verifies \emph{(C1)} and $z_{F}^{\ast
}(Y_{\mathcal{A}})$ is bounded from below, where $z_{F}^{\ast}$ is provided by
\emph{(F3)}.

Then for every $(x,A)\in\mathcal{A}$ there exists a minimal element
$(\overline{x},\overline{A})\in\mathcal{A}$ wrt $\preceq_{F,z_{F}^{\ast}}$
such that $(\overline{x},\overline{A})\preceq_{F,z_{F}^{\ast}}(x,A)$; moreover
$\mathcal{A}\ni(x^{\prime},A^{\prime})\preceq_{F}(\overline{x},\overline{A})$
implies $x^{\prime}=\overline{x}$.
\end{theorem}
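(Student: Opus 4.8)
The plan is to reduce the statement to Theorem \ref{c-hzH}~(ii) by verifying that its hypotheses follow from (i) and (ii). Recall that Theorem \ref{c-hzH}~(ii) requires condition (C0) for $(\mathcal{A},\preceq_F)$ together with a functional $z^\ast\in K^+$ satisfying $\inf z^\ast(F(x,x'))>0$ for $x\neq x'$ and $\inf z^\ast(A)>-\infty$ for all $A\in\Pr_{2^Y}(\mathcal{A})$. I will take $z^\ast:=z_F^\ast$, the functional provided by (F3). The boundedness-from-below condition $\inf z_F^\ast(A)>-\infty$ is immediate from the hypothesis that $z_F^\ast(Y_{\mathcal{A}})$ is bounded from below, since $A\subseteq Y_{\mathcal{A}}$ for every $A\in\Pr_{2^Y}(\mathcal{A})$. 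The condition $\inf z_F^\ast(F(x,x'))>0$ for $x\neq x'$ also follows from (F3): for $x\neq x'$ we have $F(x,x')\subseteq F_\delta$ with $\delta:=d(x,x')>0$, hence $\inf z_F^\ast(F(x,x'))\geq\inf z_F^\ast(F_\delta)=\eta(\delta)>0$.

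The only remaining point is to upgrade (C1) to (C0). By Proposition \ref{rel-c}, specifically \eqref{r-c0}, we have $(\text{C0})\Longleftrightarrow(\text{Ca})\wedge(\text{C1})$. Since (C1) is assumed in (ii), it suffices to establish (Ca) for $(\mathcal{A},\preceq_F)$. This is exactly the content of Lemma \ref{lem5}: condition (F3) holds by (i), and $z_F^\ast(Y_{\mathcal{A}})$ is bounded from below by (ii), so Lemma \ref{lem5} yields (Ca). Combining (Ca) with (C1) gives (C0).

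With (C0) established and $z^\ast:=z_F^\ast$ verifying both scalarization conditions, Theorem \ref{c-hzH}~(ii) applies directly: for every $(x,A)\in\mathcal{A}$ it produces $(\overline{x},\overline{A})\in\mathcal{A}$, minimal wrt $\preceq_{F,z_F^\ast}$, with $(\overline{x},\overline{A})\preceq_{F,z_F^\ast}(x,A)$ and such that $\mathcal{A}\ni(x',A')\preceq_F(\overline{x},\overline{A})$ implies $x'=\overline{x}$. This is precisely the assertion of Theorem \ref{zv-t2-bh}, so the proof is complete.

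I do not anticipate a genuine obstacle here: the theorem is essentially a bookkeeping corollary that packages (F3) plus a lower-boundedness assumption into the abstract hypotheses of Theorem \ref{c-hzH}. The only place requiring a moment's care is the passage from (C1) to (C0) via the equivalence \eqref{r-c0}, which depends on first checking (Ca) — and that, in turn, is handled cleanly by Lemma \ref{lem5}, whose proof already does the real work of showing that a $\preceq_F$-decreasing sequence forces $\eta(d(x_n,x_{n+1}))\to 0$, hence $d(x_n,x_{n+1})\to 0$, hence (by Lemma \ref{lem2}) the Cauchy property.
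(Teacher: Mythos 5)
Your proposal is correct and follows exactly the paper's own argument: verify the two scalarization hypotheses of Theorem \ref{c-hzH}~(ii) for $z^{\ast}:=z_{F}^{\ast}$ directly from (F3) and the lower boundedness of $z_{F}^{\ast}(Y_{\mathcal{A}})$, obtain (Ca) from Lemma \ref{lem5}, combine with (C1) via Proposition \ref{rel-c} to get (C0), and conclude. No differences worth noting.
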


\begin{proof}
Taking into account (\ref{zv-f2}), $\inf z_{F}^{\ast}(F(x,x^{\prime}))\geq
\eta\left(  d(x,x^{\prime})\right)  >0$ for $x,x^{\prime}\in X$ with $x\neq
x^{\prime}$. Moreover, $\inf z_{F}^{\ast}(A)\geq\inf z_{F}^{\ast}\left(
Y_{\mathcal{A}}\right)  >-\infty$ for every $A\in\Pr_{2^{Y}}(\mathcal{A})$. On
the other hand, applying Lemma \ref{lem5}, (Ca) is verified. Since (C1) holds,
using now Proposition \ref{rel-c} we obtain that (C0) is verified. The
conclusion follows using Theorem \ref{c-hzH}~(ii).
\end{proof}

\medskip

Note that condition (F2) in Theorem \ref{zv-t2-bh} was used only for obtaining
the transitivity of $\preceq_{F}$; for $G:X\times X\rightrightarrows K$
verifying (F1) and $\mathcal{A}\subset X\times2^{Y}$ we introduce the relation
$\preceq_{G}^{\prime}$ on $\mathcal{A}$ defined by $(x_{1},A_{1})\preceq
_{G}^{\prime}(x_{2},A_{2})$ if $A_{2}\subset A_{1}+G(x_{1},x_{2})$. Of course,
for $z^{\ast}\in K^{+}$, the relation $\preceq_{G,z^{\ast}}^{\prime}$ on
$\mathcal{A}$ is introduced as in (\ref{vz-eq5}) by replacing $\preceq_{F}$
with $\preceq_{G}^{\prime}$. So, we get the following variant of Theorem
\ref{zv-t2-bh} (with the same proof).

\begin{theorem}
\label{zv-t2-LC}Assume that the following two conditions hold:

\emph{(i)} $G:X\times X\rightrightarrows K$ is such that \emph{(F1)},
\emph{(F3)} hold, and $\preceq_{G}^{\prime}$ (defined above) is transitive;

\emph{(ii)} $\mathcal{A}$ verifies \emph{(C1)} with $\preceq_{G}^{\prime}$
instead of $\preceq_{G}$, and $z_{G}^{\ast}(Y_{\mathcal{A}})$ is bounded from
below, where $z_{G}^{\ast}$ is provided by \emph{(F3)}.

Then, for every $(x,A)\in\mathcal{A}$ there exists a minimal element
$(\overline{x},\overline{A})\in\mathcal{A}$ wrt $\preceq_{G,z_{G}^{\ast}%
}^{\prime}$ such that $(\overline{x},\overline{A})\preceq_{G,z_{G}^{\ast}%
}^{\prime}(x,A)$; moreover $\mathcal{A}\ni(x^{\prime},A^{\prime})\preceq
_{G}^{\prime}(\overline{x},\overline{A})$ implies $x^{\prime}=\overline{x}$.
\end{theorem}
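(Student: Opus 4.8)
The plan is to reproduce, almost verbatim, the argument used for Theorem \ref{zv-t2-bh}, the only structural change being that the transitivity of $\preceq_{G}^{\prime}$ is now \emph{assumed} rather than deduced from a triangle inequality of the form (F2). First I would record the two consequences of (F3): writing $\eta(\delta):=\inf z_{G}^{\ast}(G_{\delta})$ with $G_{\delta}$ defined as in \eqref{zv-fdelta} (with $G$ in place of $F$), condition (F3) gives $\inf z_{G}^{\ast}(G(x,x^{\prime}))\geq\eta(d(x,x^{\prime}))>0$ whenever $x\neq x^{\prime}$, and $\eta$ is nondecreasing on $]0,\infty[$; moreover, since $z_{G}^{\ast}(Y_{\mathcal{A}})$ is bounded from below, $\inf z_{G}^{\ast}(A)\geq\inf z_{G}^{\ast}(Y_{\mathcal{A}})>-\infty$ for every $A\in\Pr\nolimits_{2^{Y}}(\mathcal{A})$.

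Next I would check that $(\mathcal{A},\preceq_{G}^{\prime})$ verifies condition (Ca), which is the analogue of Lemma \ref{lem5}. Take a $\preceq_{G}^{\prime}$--decreasing sequence $\left((x_{n},A_{n})\right)_{n\geq1}\subset\mathcal{A}$, so $A_{n}\subset A_{n+1}+G(x_{n+1},x_{n})$ for $n\geq1$. Applying $z_{G}^{\ast}$ and using $G(x_{n+1},x_{n})\subset K$, $z_{G}^{\ast}\in K^{+}$ (this is where the argument is in fact slightly easier than in Lemma \ref{lem5}, since here there is no extra $+K$ to dispose of) one gets, exactly as in \eqref{zv-eq3b},
\[
\gamma_{n}-\gamma_{n+1}\geq\inf z_{G}^{\ast}\left(G(x_{n+1},x_{n})\right)\geq\eta\left(d(x_{n},x_{n+1})\right)\geq0,\qquad\gamma_{n}:=\inf z_{G}^{\ast}(A_{n})\in\mathbb{R}.
\]
Hence $(\gamma_{n})$ is nonincreasing and bounded below, so convergent, which forces $\eta\left(d(x_{n},x_{n+1})\right)\to0$ and therefore $d(x_{n},x_{n+1})\to0$ because $\eta$ is nondecreasing and strictly positive on $]0,\infty[$. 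Lemma \ref{lem2} then yields (Ca).

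Finally I would combine this with hypothesis (ii): since $(\mathcal{A},\preceq_{G}^{\prime})$ verifies (C1) and (Ca), Proposition \ref{rel-c} (the equivalence $(\text{C0})\Leftrightarrow(\text{Ca})\wedge(\text{C1})$) gives that it verifies (C0), i.e.\ (Cb)$=$(Ab). Now I run the proof of Theorem \ref{c-hzH}(ii) with $\preceq_{G}^{\prime}$ in place of $\preceq_{F}$: $\preceq_{G}^{\prime}$ is a preorder (reflexive by (F1), transitive by hypothesis), $\phi(x,A):=\inf z_{G}^{\ast}(A)$ is real-valued and $\preceq_{G}^{\prime}$--increasing by the estimate above, and $\preccurlyeq_{\phi}$ from \eqref{r-lefi} is precisely $\preceq_{G,z_{G}^{\ast}}^{\prime}$. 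By Lemma \ref{lem-fi}(iii), $(\mathcal{A},\preceq_{G,z_{G}^{\ast}}^{\prime})$ verifies (Ab), and $\phi$ is strictly $\preceq_{G,z_{G}^{\ast}}^{\prime}$--increasing, so Theorem \ref{ex-BB}(ii) applied to it produces, for any $(x,A)\in\mathcal{A}$, a $\preceq_{G,z_{G}^{\ast}}^{\prime}$--minimal $(\overline{x},\overline{A})\preceq_{G,z_{G}^{\ast}}^{\prime}(x,A)$. For $\mathcal{A}\ni(x^{\prime},A^{\prime})\preceq_{G}^{\prime}(\overline{x},\overline{A})$, Lemma \ref{lem-fi}(ii) gives $\inf z_{G}^{\ast}(A^{\prime})=\inf z_{G}^{\ast}(\overline{A})$; were $x^{\prime}\neq\overline{x}$, the estimate would give $\inf z_{G}^{\ast}(\overline{A})\geq\inf z_{G}^{\ast}(A^{\prime})+\inf z_{G}^{\ast}\left(G(x^{\prime},\overline{x})\right)>\inf z_{G}^{\ast}(A^{\prime})$, a contradiction; hence $x^{\prime}=\overline{x}$.

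The only real point requiring care — the ``main obstacle'' — is a bookkeeping one: one must make sure that every auxiliary fact from Section \ref{sec-vz-10.1} that was invoked for $\preceq_{F}$ (the monotonicity inequality \eqref{zv-eq3b}, the construction of $\preceq_{F,z^{\ast}}$ in \eqref{vz-eq5}, its being a partial order, and the implication \eqref{zv-eq3}) goes through for $\preceq_{G}^{\prime}$ even though $G$ need not satisfy (F2) and $\preceq_{G}^{\prime}$ is defined with $A_{1}+G(x_{1},x_{2})$ rather than $A_{1}+F(x_{1},x_{2})+K$. This is immediate: transitivity — the only place (F2) was used — is now a standing assumption, the inclusion $G(x,x^{\prime})\subset K$ makes all the $z_{G}^{\ast}$--estimates hold a fortiori, and $\preceq_{G,z_{G}^{\ast}}^{\prime}$ is then produced abstractly via Lemma \ref{lem-fi}. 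Thus the proof is word-for-word that of Theorem \ref{zv-t2-bh}, as announced.
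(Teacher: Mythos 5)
Your proposal is correct and follows exactly the route the paper intends: the paper gives no separate argument for Theorem \ref{zv-t2-LC} beyond the remark that it has ``the same proof'' as Theorem \ref{zv-t2-bh}, and your write-up is precisely that proof (the Lemma \ref{lem5}-type verification of (Ca), Proposition \ref{rel-c} to get (C0), then the Theorem \ref{c-hzH}(ii) argument via Lemma \ref{lem-fi} and Theorem \ref{ex-BB}(ii)), with the correct observation that (F2) entered only through transitivity, now assumed, and that the $z_{G}^{\ast}$--estimates hold a fortiori without the extra $+K$.
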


The next result is \cite[Th.\ 4.1]{TamZal:11} and \cite[Th.\ 10.4.4]%
{KhaTamZal:15b}.

\begin{corollary}
\label{zv-t3-Ha}Assume that $(X,d)$ is complete, $F$ satisfies condition
\emph{(F3)} and $\Gamma:X\rightrightarrows Y$ is such that $z_{F}^{\ast}$
[provided by \emph{(F3)}] is bounded from below on $\Gamma(X)$ (assumed to be
nonempty). If $S(u):=\{x\in X\mid\Gamma(u)\subseteq\Gamma(x)+F(x,u)+K\}$ is
closed for every $u\in X$, then for every $x\in\operatorname*{dom}\Gamma$
there exists $\overline{x}\in S(x)$ such that $S(\overline{x})=\{\overline
{x}\}$.
\end{corollary}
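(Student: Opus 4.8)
The plan is to derive Corollary \ref{zv-t3-Ha} as a special case of Theorem \ref{zv-t2-bh} applied to the set $\mathcal{A}:=\mathcal{A}_\Gamma=\{(x,\Gamma(x))\mid x\in\operatorname*{dom}\Gamma\}$, for which $Y_{\mathcal{A}_\Gamma}=\Gamma(X)$. First I would check that hypothesis (H) holds: $\Gamma(X)\neq\emptyset$ is assumed, and $\Gamma(x)\neq\emptyset$ for every $x\in\operatorname*{dom}\Gamma$ by the definition of the domain, so every set in $\Pr_{2^Y}(\mathcal{A}_\Gamma)$ is nonempty. Condition (i) of Theorem \ref{zv-t2-bh} is exactly the assumption that $F$ satisfies (F3), and the boundedness-from-below requirement in condition (ii) is precisely the hypothesis that $z_F^\ast$ is bounded below on $\Gamma(X)=Y_{\mathcal{A}_\Gamma}$.

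The substantive point is to verify that $(\mathcal{A}_\Gamma,\preceq_F)$ satisfies (C1), given that $S(u)=\{x\in X\mid\Gamma(u)\subseteq\Gamma(x)+F(x,u)+K\}$ is closed for every $u\in X$. I would observe that, by the definition of $\preceq_F^1$ in \eqref{zv-eq1-b0} (equivalently of $\preceq_F$ restricted to singleton-indexed pairs), $x\in S(u)$ means exactly $(x,\Gamma(x))\preceq_F(u,\Gamma(u))$. Now take a $\preceq_F$--decreasing sequence $\big((x_n,\Gamma(x_n))\big)_{n\geq1}\subset\mathcal{A}_\Gamma$ with $(x_n)$ Cauchy; since $(X,d)$ is complete, $x_n\to\bar x$ for some $\bar x\in X$. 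Fix $p\geq1$. For every $n\geq p$ transitivity of $\preceq_F$ gives $(x_n,\Gamma(x_n))\preceq_F(x_p,\Gamma(x_p))$, i.e.\ $x_n\in S(x_p)$; since $S(x_p)$ is closed and $x_n\to\bar x$, we get $\bar x\in S(x_p)$, that is $(\bar x,\Gamma(\bar x))\preceq_F(x_p,\Gamma(x_p))$. As $p$ was arbitrary, this verifies (C1) with the witness $(\bar x,\Gamma(\bar x))\in\mathcal{A}_\Gamma$ (note $\bar x\in\operatorname*{dom}\Gamma$ follows from $\bar x\in S(x_1)$, which forces $\Gamma(\bar x)\neq\emptyset$, hence $\bar x\in\operatorname*{dom}\Gamma$; this also uses that $F(x,x_1)\neq\emptyset$ is not an obstruction since $F$ maps into $K\ni 0$).

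With (i) and (ii) of Theorem \ref{zv-t2-bh} in place, that theorem yields, for every $(x,\Gamma(x))\in\mathcal{A}_\Gamma$ with $x\in\operatorname*{dom}\Gamma$, a minimal element $(\overline x,\overline A)$ wrt $\preceq_{F,z_F^\ast}$ with $(\overline x,\overline A)\preceq_{F,z_F^\ast}(x,\Gamma(x))$, and moreover $\mathcal{A}_\Gamma\ni(x',A')\preceq_F(\overline x,\overline A)$ implies $x'=\overline x$. Since every element of $\mathcal{A}_\Gamma$ has the form $(x',\Gamma(x'))$, and $(x',\Gamma(x'))\preceq_F(\overline x,\Gamma(\overline x))$ is by definition $x'\in S(\overline x)$, the last implication says exactly $S(\overline x)\subseteq\{\overline x\}$; combined with $\overline x\in S(\overline x)$ (reflexivity of $\preceq_F$ via (F1)), we conclude $S(\overline x)=\{\overline x\}$. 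Finally $(\overline x,\Gamma(\overline x))\preceq_{F,z_F^\ast}(x,\Gamma(x))$ gives in particular $(\overline x,\Gamma(\overline x))\preceq_F(x,\Gamma(x))$, i.e.\ $\overline x\in S(x)$, which is the asserted conclusion.

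The only mildly delicate point — and the step I would be most careful about — is the translation between the set-valued language ``$S(u)$ closed'' and condition (C1) for $(\mathcal{A}_\Gamma,\preceq_F)$: one must make sure that $\overline A$ produced by Theorem \ref{zv-t2-bh} is indeed $\Gamma(\overline x)$ (which is automatic because $\overline A$ is a component of an element of $\mathcal{A}_\Gamma$), and that the ``closedness'' hypothesis, a priori about sequential/topological closure, matches the sequential argument used (here there is no gap, since in a metric space closed sets are sequentially closed). No new ideas beyond Theorem \ref{zv-t2-bh} are needed; the corollary is a direct specialization.
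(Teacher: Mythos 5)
Your proof is correct, but it follows a different chain of results than the paper does. The paper's own proof reduces everything to the one-component setting: it invokes Lemma \ref{lem5} to get condition (Ca) for $(\mathcal{A}_{\Gamma},\preceq_{F})$, hence (Aa) for the induced preorder $\preccurlyeq$ on $X$ (via Remark \ref{p-dyn}), observes that closedness of the sets $S(u)$ gives (A'1) (via Remarks \ref{p-dyn} and \ref{zv-rem7}), and then applies the Dancs--Heged\"us--Medvegyev-type result, Corollary \ref{zv-t-turinici}, directly to $(X,d,\preccurlyeq)$. You instead stay in the product setting $X\times 2^{Y}$ and specialize Theorem \ref{zv-t2-bh} to $\mathcal{A}_{\Gamma}$, verifying (C1) by hand from completeness of $(X,d)$ and closedness of $S(u)$; your verification of (C1) is in substance the same limit argument the paper hides inside Remark \ref{zv-rem7}, and all the bookkeeping you flag (that $S(u)\subseteq\operatorname*{dom}\Gamma$ when $u\in\operatorname*{dom}\Gamma$, that $\overline{A}=\Gamma(\overline{x})$, that $\overline{x}\in S(\overline{x})$ by (F1)) is handled correctly. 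What the two approaches buy: the paper's route is shorter because Corollary \ref{zv-t-turinici} already delivers exactly the conclusion $S(\overline{x})=\{\overline{x}\}$ once (Aa) and (A'1) are in place; your route passes through the stronger Theorem \ref{zv-t2-bh} and therefore yields, at no extra cost, the additional information that $(\overline{x},\Gamma(\overline{x}))$ can be chosen minimal with respect to the partial order $\preceq_{F,z_{F}^{\ast}}$, which the statement of the corollary does not record. Both derivations ultimately rest on the same engine (Theorem \ref{t-hz} and the BB-principle), so the difference is one of decomposition rather than of underlying method.
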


\begin{proof}
Using Lemma \ref{lem5}, we have that $(\mathcal{A}_{\Gamma},\preceq_{F})$
verifies condition (Ca), and so $(X,d,\preceq)$ satisfies condition (Aa). As
seen in Remark \ref{p-dyn}, condition (A'1) is verified. The conclusion
follows using Remark \ref{zv-rem7} and Corollary \ref{zv-t-turinici}.
\end{proof}

\begin{remark}
\label{rem5} {\rm (i) Taking into account Remark \ref{p-dyn},
instead of assuming in Corollary \ref{zv-t3-Ha} that $S(u)$ is
closed for every $u\in X$ it is sufficient to have that $S(u)$ is
$\preccurlyeq$--lower closed [that is, $(X,d,\preceq)$ verifies
(A'1), or equivalently $(\mathcal{A}_{\Gamma },\preccurlyeq_{F})$
verifies (C'1)], where $\preccurlyeq$ is defined by $x\preccurlyeq
u:\iff x\in S(u)$; moreover, instead of assuming that $(X,d)$ is
complete, it is sufficient to suppose that any $\preceq$--decreasing
Cauchy sequence is convergent [that is, $(X,d,\preceq)$ verifies
(Aa1), or equivalently $(\mathcal{A}_{\Gamma},\preccurlyeq_{F})$
verifies (Ca1)]. }

{\rm (ii) Observe that Theorem \ref{zv-t2-bh} extends \cite[Cor.\ 3.4]%
{Qiu:14} in the case in which $\Lambda$ is a singleton because condition (C'1)
is verified when $(X,d)$ is complete and the sets $S(x)$ are dynamically
closed for $x\in S(x_{0})$ (assumed to be nonempty).}
\end{remark}

With a similar proof to that of Corollary \ref{zv-t3-Ha} one gets the next
result; for $Y$ a separated locally convex space, $K\subseteq Y$ a closed
convex cone, $k^{0}\in K\backslash\{0\}$, $F(x,x^{\prime}):= \{d(x,x^{\prime
})k^{0}\}$ for $x,x^{\prime}\in X$, and $\Gamma(X)$ quasi $K$--bounded, this
reduces to \cite[Cor.\ 3.1]{LinChu09}.

\begin{corollary}
\label{zv-t3-LC}Assume that $(X,d)$ is complete, $F:X\times X\rightrightarrows
K$ satisfies conditions \emph{(F1)} and \emph{(F3)}, and $\Gamma
:X\rightrightarrows Y$ is such that $z_{F}^{\ast}$ (from \emph{(F3)}) is
bounded from below on $\Gamma(X)$. Assume also that \emph{(a)} $\{x\in
X\mid\Gamma(u)\subseteq\Gamma(x)+F(x,u)\}$ is closed for every $u\in X$, and
\emph{(b)}~$\Gamma(y)\subseteq\Gamma(x)+F(x,y)$, $\Gamma(z)\subseteq
\Gamma(y)+F(y,z)$ imply $\Gamma(z)\subseteq\Gamma(x)+F(x,z)$ for all $x,y,z\in
X$. Then for every $x\in\operatorname*{dom}\Gamma$ there exists $\overline
{x}\in X$ such that $\Gamma(x)\subseteq\Gamma(\overline{x})+F(\overline{x}
,x)$, and $\Gamma(\overline{x})\subseteq\Gamma(x)+F(x,\overline{x})+K$ implies
$x=\overline{x}$.
\end{corollary}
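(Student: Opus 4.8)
The plan is to treat this as the set-valued, $(F3)$-based companion of Corollary~\ref{zv-t3-Ha}, with hypothesis (b) playing the role that the triangle inequality (F2) plays there. First I would transfer everything to the preordered set $(\mathcal{A}_\Gamma,\preceq_F^{\prime})$ (equivalently, to $\operatorname{dom}\Gamma$ with $x\preccurlyeq u:\iff\Gamma(u)\subseteq\Gamma(x)+F(x,u)$, i.e.\ $x\in S(u)$): reflexivity of $\preccurlyeq$ is (F1) (since $0\in F(x,x)$), transitivity of $\preccurlyeq$ is exactly hypothesis (b), and $u\in\operatorname{dom}\Gamma$ forces $S(u)\subseteq\operatorname{dom}\Gamma$ because $\emptyset\neq\Gamma(u)\subseteq\Gamma(x)+F(x,u)$ gives $\Gamma(x)\neq\emptyset$. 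Note that $S(u)$ is precisely the lower section of $\preccurlyeq$ appearing in (a).

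Next I would verify the hypotheses of Theorem~\ref{zv-t2-LC} with $G:=F$, $\mathcal{A}:=\mathcal{A}_\Gamma$, $z_G^*:=z_F^*$ (or, in the spirit of the proof of Corollary~\ref{zv-t3-Ha}, check (Ca) via the argument of Lemma~\ref{lem5} and (A'1) via (a), then invoke Corollary~\ref{zv-t-turinici}). Conditions (F1), (F3) are assumed, transitivity of $\preceq_F^{\prime}$ is (b), and $z_F^*(Y_{\mathcal{A}_\Gamma})=z_F^*(\Gamma(X))$ is bounded below by assumption; it remains to check (C1) for $(\mathcal{A}_\Gamma,\preceq_F^{\prime})$. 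Given a $\preceq_F^{\prime}$-decreasing sequence $\big((x_n,\Gamma(x_n))\big)_{n\geq1}\subseteq\mathcal{A}_\Gamma$ with $(x_n)$ Cauchy, completeness of $(X,d)$ gives $x_n\to\overline x\in X$; by transitivity $x_m\in S(x_n)$ for $m\geq n$, so closedness of $S(x_n)$ yields $\overline x\in S(x_n)$, whence $\Gamma(\overline x)\neq\emptyset$, $(\overline x,\Gamma(\overline x))\in\mathcal{A}_\Gamma$ and $(\overline x,\Gamma(\overline x))\preceq_F^{\prime}(x_n,\Gamma(x_n))$ for all $n$. So (C1) holds, and the asymptoticity needed inside the proof is exactly the Lemma~\ref{lem5}-type estimate $\inf z_F^*(\Gamma(x_n))\geq\inf z_F^*(\Gamma(x_{n+1}))+\eta(d(x_n,x_{n+1}))$, where $\eta$ comes from (F3) and $\inf z_F^*(\Gamma(x_n))$ is decreasing and bounded below.

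Applying Theorem~\ref{zv-t2-LC}, for every $x\in\operatorname{dom}\Gamma$ I obtain $(\overline x,\Gamma(\overline x))\in\mathcal{A}_\Gamma$, minimal with respect to $\preceq_{F,z_F^*}^{\prime}$, with $(\overline x,\Gamma(\overline x))\preceq_{F,z_F^*}^{\prime}(x,\Gamma(x))$; unwinding the definition (and using (F1) when the two pairs coincide) gives immediately $\Gamma(x)\subseteq\Gamma(\overline x)+F(\overline x,x)$, the first half of the conclusion. The delicate step, which I expect to be the main obstacle, is the final implication with the ``$+K$'': unlike in Corollary~\ref{zv-t3-Ha}, hypothesis (b) does not make the $K$-enlarged relation transitive, so this cannot come from a minimality statement alone and (F3) must do the work. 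Concretely, if $\Gamma(\overline x)\subseteq\Gamma(x)+F(x,\overline x)+K$ but $x\neq\overline x$, then $d(x,\overline x)>0$, so $z_F^*\in K^+$ and (F3) give $\inf z_F^*(\Gamma(\overline x))\geq\inf z_F^*(\Gamma(x))+\eta(d(x,\overline x))>\inf z_F^*(\Gamma(x))$; but since $x\neq\overline x$ the pairs $(\overline x,\Gamma(\overline x))$ and $(x,\Gamma(x))$ are distinct, so $(\overline x,\Gamma(\overline x))\preceq_{F,z_F^*}^{\prime}(x,\Gamma(x))$ forces $\inf z_F^*(\Gamma(\overline x))<\inf z_F^*(\Gamma(x))$, a contradiction, hence $x=\overline x$. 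The one point that needs genuine care is phrasing this last implication for an arbitrary point rather than the base point $x$: there one must upgrade the ``no-$K$'' minimality supplied by the last assertion of Theorem~\ref{zv-t2-LC} by sandwiching the relevant $z_F^*$-values through (F3), and it is exactly the absence of (F2) that makes this the crux of the argument.
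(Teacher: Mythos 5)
Your reduction is the intended one: the paper's own ``proof'' is just the remark that one argues as in Corollary~\ref{zv-t3-Ha}, i.e., one works with the relation $x\preccurlyeq u:\iff\Gamma(u)\subseteq\Gamma(x)+F(x,u)$, whose reflexivity is (F1) and whose transitivity is hypothesis (b), for which (Aa) follows from the Lemma~\ref{lem5}-type estimate built on (F3) and the lower boundedness, and (A'1) follows from (a) together with completeness; one then invokes Corollary~\ref{zv-t-turinici} (equivalently Theorem~\ref{zv-t2-LC}, as you do). Up to and including the first half of the conclusion, $\Gamma(x)\subseteq\Gamma(\overline x)+F(\overline x,x)$, your argument is complete and correct.

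The gap is exactly where you locate it, and the repair you gesture at cannot be carried out. Your chain ``$(\overline x,\Gamma(\overline x))\preceq^{\prime}_{F,z_F^{\ast}}(x,\Gamma(x))$ forces $\inf z_F^{\ast}(\Gamma(\overline x))<\inf z_F^{\ast}(\Gamma(x))$'' is available only when $x$ is the base point, since that is the only point for which Theorem~\ref{zv-t2-LC} asserts this relation. For an arbitrary $x$ with $\Gamma(\overline x)\subseteq\Gamma(x)+F(x,\overline x)+K$, the minimality of $(\overline x,\Gamma(\overline x))$ wrt $\preceq^{\prime}_{F,z_F^{\ast}}$ is silent: that inclusion does not place $(x,\Gamma(x))$ $\preceq_F^{\prime}$-below $(\overline x,\Gamma(\overline x))$ (no ``$+K$'' is permitted there), so no contradiction with minimality is available and your sandwich has only one of the two required inequalities. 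The failure is not merely expository: take $X=\{a,b\}$ with $d(a,b)=1$, $Y=\mathbb{R}$, $K=\mathbb{R}_{+}$, $F(x,x^{\prime})=\{d(x,x^{\prime})\}$, $\Gamma(a)=\{0\}$, $\Gamma(b)=\{10\}$. All hypotheses hold (the sets in (a) are $\{a\}$ and $\{b\}$, the relation in (b) is the diagonal, $z_F^{\ast}=\operatorname*{id}$), and for the base point $b$ the only admissible $\overline x$ is $b$ itself; yet $\Gamma(b)=\{10\}\subseteq\{0\}+\{1\}+K=\Gamma(a)+F(a,b)+K$ with $a\neq b$. Hence the final implication, read with $x$ ranging over all of $X$, cannot be derived from these hypotheses; what the route through Theorem~\ref{zv-t2-LC} (or Corollary~\ref{zv-t-turinici}) actually delivers is that implication \emph{without} the ``$+K$'', plus, for the base point only, the ``$+K$'' version via precisely the computation you give. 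So either the last implication must be read as referring to the same $x$ as in ``for every $x\in\operatorname*{dom}\Gamma$'' (in which case your proof is complete), or the ``$+K$'' has to be dropped; obtaining genuine minimality for the $K$-enlarged sections requires assuming transitivity and closedness for the $K$-enlarged relation, as in Corollary~\ref{zv-t3-Ha}.
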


In the sequel $H\subset K\backslash(-K)$ is a $K$--convex set and $F_{H}$ is
defined by (\ref{zv-eqfh}). As mentioned in Lemma \ref{zv-lem1}, $F_{H}$
verifies conditions (F1) and (F2). Of course, any of the preceding results can
be reformulated for $F:=F_{H}$, $\preceq_{H}:=\preceq_{F_{H}}$ and
$\preceq_{H,z^{\ast}}:=\preceq_{F_{H},z^{\ast}}$. For example, the version of
Theorem \ref{zv-t2-bh} is the one corresponding to (ii) in the following
result. The other situations are more specific to the case $F=F_{H}$.

In the sequel all the conditions and results refer to $\preceq_{H}$.

\begin{theorem}
\label{zv-t2-bhH}Let $(\mathcal{A},\preceq_{H})$ verify condition \emph{(C1)}
wrt $\preceq_{H}$. Assume that one of the following conditions are verified:

\emph{(i)} either \emph{(a)} $0\notin\operatorname*{cl}\nolimits_{seq}(H+K)$
and $Y_{\mathcal{A}}$ is quasi $K$--bounded, or \emph{(b)} $0\notin
\operatorname*{cl}(H+K)$ and $Y_{\mathcal{A}}$ is $K$--bounded;

\emph{(ii)} there exists $z^{\ast}\in K^{+}$ such that $\inf z^{\ast}(H)>0$
and $\inf z^{\ast}(Y_{\mathcal{A}})>-\infty$,

\emph{(iii)} $H$ satisfies condition (\ref{zv-f5b}) and $Y_{\mathcal{A}}$ is
$K^{+}$--bounded,

\emph{(iv)} $H$ is seq-compact, and there exist a bounded set $B\subset Y$ and
a convex cone $C$ such that $K\subset C$, $H\subset\operatorname*{int}C$, and
$B\not \subset A+C$ for any $A\in\Pr_{2^{Y}}(\mathcal{A})$.

Then, for every $(x,A)\in\mathcal{A}$ there exists $(\overline{x},\overline
{A})\in\mathcal{A}$ such that $(\overline{x},\overline{A})\preceq_{H}(x,A)$
and $\mathcal{A}\ni(x^{\prime},A^{\prime})\preceq_{H}(\overline{x}%
,\overline{A})$ implies $x^{\prime}=\overline{x}$. Moreover, in case
\emph{(ii)}, $(\overline{x},\overline{A})$ can be taken to be a minimal
element wrt $\preceq_{H,z^{\ast}}$ such that $(\overline{x},\overline
{A})\preceq_{H,z^{\ast}}(x,A)$.
\end{theorem}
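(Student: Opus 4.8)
The plan is to deduce Theorem \ref{zv-t2-bhH} from Theorem \ref{c-hzH} by verifying that under any of the hypotheses (i)--(iv) together with (C1), the pair $(\mathcal{A},\preceq_H)$ actually satisfies (C0); then part (i) of Theorem \ref{c-hzH} gives the first conclusion, and in case (ii) part (ii) of Theorem \ref{c-hzH} gives the additional minimality statement. By Proposition \ref{rel-c}, specifically the equivalence (C0) $\Longleftrightarrow$ (Ca) $\wedge$ (C1) in \eqref{r-c0}, it suffices to show that each of (i)--(iv) implies condition (Ca) for $(\mathcal{A},\preceq_H)$, since (C1) is assumed outright.

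The verification of (Ca) is exactly the content of Corollary \ref{c-bd}: its hypotheses (i)--(iv) are verbatim the hypotheses (i)--(iv) of the present theorem (note that assumption (H) guarantees $A\neq\emptyset$ for every $A\in\Pr_{2^Y}(\mathcal{A})$, which is the standing hypothesis of Corollary \ref{c-bd}). Hence under any one of (i)--(iv), Corollary \ref{c-bd} yields that $(\mathcal{A},\preceq_H)$ verifies (Ca). Combined with the assumed (C1), \eqref{r-c0} gives (C0). Applying Theorem \ref{c-hzH}~(i) with $F:=F_H$, for every $(x,A)\in\mathcal{A}$ we obtain $(\overline x,\overline A)\in\mathcal{A}$ with $(\overline x,\overline A)\preceq_H(x,A)$ and $\mathcal{A}\ni(x',A')\preceq_H(\overline x,\overline A)\Rightarrow x'=\overline x$, which is the main assertion.

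For the final sentence, suppose we are in case (ii): there is $z^\ast\in K^+$ with $\inf z^\ast(H)>0$ and $\inf z^\ast(Y_{\mathcal{A}})>-\infty$. Then for $x\neq x'$ we have $F_H(x,x')=d(x,x')H$, so $\inf z^\ast(F_H(x,x'))=d(x,x')\inf z^\ast(H)>0$; and for every $A\in\Pr_{2^Y}(\mathcal{A})$, $\inf z^\ast(A)\geq\inf z^\ast(Y_{\mathcal{A}})>-\infty$. Thus the hypotheses of Theorem \ref{c-hzH}~(ii) are met with this $z^\ast$, and it provides $(\overline x,\overline A)\in\mathcal{A}$ minimal wrt $\preceq_{F_H,z^\ast}=\preceq_{H,z^\ast}$ with $(\overline x,\overline A)\preceq_{H,z^\ast}(x,A)$ and the same implication $\mathcal{A}\ni(x',A')\preceq_H(\overline x,\overline A)\Rightarrow x'=\overline x$.

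The only real work is organizational: matching the four hypotheses to Corollary \ref{c-bd} and remembering the chain (i)$\vee$(ii)$\vee$(iii)$\vee$(iv) $\Rightarrow$ (Ca), then $(Ca)\wedge(C1)\Rightarrow(C0)$ via Proposition \ref{rel-c}. I expect no genuine obstacle here, since all the analytic content — that the boundedness-plus-geometry hypotheses force $\preceq_H$-decreasing sequences to have Cauchy first components — has already been isolated in Proposition \ref{lem4} and Corollary \ref{c-bd}; the present theorem is essentially a repackaging that adds (C1) to upgrade (Ca) to (C0). The one point to state carefully is the computation $\inf z^\ast(F_H(x,x'))=d(x,x')\inf z^\ast(H)$ used to invoke Theorem \ref{c-hzH}~(ii) in case (ii).
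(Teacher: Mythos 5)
Your proposal is correct and follows essentially the same route as the paper: the paper verifies (C0) directly by taking a $\preceq_H$--decreasing sequence, invoking Proposition \ref{lem4} to get that $(x_n)$ is Cauchy, and then using (C1), after which Theorem \ref{c-hzH} finishes the job; you reach the same point by citing Corollary \ref{c-bd} (which packages exactly that use of Proposition \ref{lem4}) together with the equivalence (C0) $\Leftrightarrow$ (Ca) $\wedge$ (C1) from Proposition \ref{rel-c}. Your explicit check that $\inf z^\ast(F_H(x,x'))=d(x,x')\inf z^\ast(H)>0$ for $x\neq x'$, needed to invoke Theorem \ref{c-hzH}~(ii) in case (ii), is a welcome detail the paper leaves implicit.
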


\begin{proof}
It is sufficient to show that condition (C0) is verified. Consider a
$\preceq_{H}$--decreasing sequence $\left(  (x_{n},A_{n})\right)  _{n\geq
1}\subset\mathcal{A}$ with $A_{1}\neq\emptyset$. Using Proposition \ref{lem4}
(i), (ii), (iii) or (iv) when (i), (ii), (iii) or (iv) (from our hypothesis)
holds, respectively, we obtain that $(x_{n})$ is Cauchy. By (C1), there exists
$(x,A)\in\mathcal{A}$ such that $(x,A)\preceq_{H}(x_{n},A_{n})$ for $n\geq1$.
Hence (C0) holds; the conclusion follows using Theorem \ref{c-hzH}.
\end{proof}

\medskip

With a similar proof to that of Theorems \ref{zv-t2-bhH} we obtain the next result.

\begin{theorem}
\label{zv-t3-qiu}Let $(\mathcal{A},\preceq_{H})$ verify \emph{(C1)} wrt
$\preceq_{H}$. Assume that $(x_{0},A_{0})\in\mathcal{A}$ and $\varepsilon>0$
are such that $A_{0}\not \subset A+\varepsilon H+K$ for all $A\in\Pr_{2^{Y}%
}(\mathcal{A})$. Then there exists $(\overline{x},\overline{A})\in\mathcal{A}$
such that $(\overline{x},\overline{A})\preceq_{H}(x_{0},A_{0})$, and
$\mathcal{A}\ni(x^{\prime},A^{\prime})\preceq_{H}(\overline{x},\overline{A})$
implies $x^{\prime}=\overline{x}$; moreover, $d(\overline{x},x_{0}%
)<\varepsilon$.
\end{theorem}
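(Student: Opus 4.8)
The plan is to imitate the proof of Theorem~\ref{zv-t2-bhH}: pass to the lower section of $(x_0,A_0)$, verify condition (C0) there, and conclude via Theorem~\ref{c-hzH}(i) combined with Remark~\ref{zv-rem-rel2}. The only genuinely new ingredient is that the hypothesis $A_0\not\subset A+\varepsilon H+K$ feeds directly into assertion (v) of Proposition~\ref{lem4}, and also yields the distance estimate at the end.

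First I would set $\mathcal{B}:=\mathcal{A}_{F_H}(x_0,A_0)=\{(x',A')\in\mathcal{A}\mid(x',A')\preceq_H(x_0,A_0)\}$ and show that $(\mathcal{B},\preceq_H)$ verifies (C0). By Proposition~\ref{rel-c} it suffices to check (Ca) and (C1) for $\mathcal{B}$. For (Ca), take a $\preceq_H$-decreasing sequence $\big((x_n,A_n)\big)_{n\ge1}\subseteq\mathcal{B}$ and prepend $(x_0,A_0)$, i.e.\ put $(\tilde x_1,\tilde A_1):=(x_0,A_0)$ and $(\tilde x_{n+1},\tilde A_{n+1}):=(x_n,A_n)$ for $n\ge1$; since $(x_1,A_1)\preceq_H(x_0,A_0)$ this is again $\preceq_H$-decreasing, $\tilde A_1=A_0\neq\emptyset$ by (H), and every $\tilde A_n$ lies in $\Pr_{2^Y}(\mathcal{A})$. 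The hypothesis then says $\tilde A_1=A_0\not\subset\tilde A_n+\varepsilon H+K$ for all $n\ge1$, which is exactly assertion (v) of Proposition~\ref{lem4} with $\alpha:=\varepsilon$ and $P:=\mathbb{N}^{\ast}$; hence $\sum_{n\ge1}d(\tilde x_n,\tilde x_{n+1})\le\varepsilon$, so $(\tilde x_n)$, and therefore $(x_n)$, is Cauchy. For (C1): if such a sequence in $\mathcal{B}$ is Cauchy, (C1) for $\mathcal{A}$ gives $(x,A)\in\mathcal{A}$ with $(x,A)\preceq_H(x_n,A_n)$ for all $n$; transitivity of $\preceq_H$ together with $(x_1,A_1)\preceq_H(x_0,A_0)$ places $(x,A)$ in $\mathcal{B}$, so (C1) holds for $\mathcal{B}$ as well.

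With (C0) verified for $\mathcal{B}=\mathcal{A}_{F_H}(x_0,A_0)$, Remark~\ref{zv-rem-rel2} and Theorem~\ref{c-hzH}(i) produce $(\overline x,\overline A)\in\mathcal{A}$ with $(\overline x,\overline A)\preceq_H(x_0,A_0)$ and such that $\mathcal{A}\ni(x',A')\preceq_H(\overline x,\overline A)$ forces $x'=\overline x$ (any such $(x',A')$ already lies in $\mathcal{B}$ by transitivity, so the minimality obtained there suffices for all of $\mathcal{A}$). It remains to get $d(\overline x,x_0)<\varepsilon$. Since $\overline A\in\Pr_{2^Y}(\mathcal{A})$, the hypothesis gives $A_0\not\subset\overline A+\varepsilon H+K$, whereas $(\overline x,\overline A)\preceq_H(x_0,A_0)$ means $A_0\subset\overline A+d(\overline x,x_0)H+K$. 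If $d(\overline x,x_0)\ge\varepsilon$, then $d(\overline x,x_0)H\subset\varepsilon H+K$ by \eqref{r-hk}, whence $A_0\subset\overline A+\varepsilon H+K+K=\overline A+\varepsilon H+K$, a contradiction; hence $d(\overline x,x_0)<\varepsilon$.

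The only (minor) obstacle is bookkeeping: one must prepend $(x_0,A_0)$ to the $\preceq_H$-decreasing sequence before invoking Proposition~\ref{lem4}, so that its item (v) applies with $A_1=A_0$ and the infinite index set $P=\mathbb{N}^{\ast}$, and one must note that the minimality produced on the section $\mathcal{B}$ transfers to all of $\mathcal{A}$ — both points being immediate from transitivity of $\preceq_H$ and assumption (H).
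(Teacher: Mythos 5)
Your proposal is correct and follows essentially the same route as the paper: the paper likewise restricts to the lower section $\mathcal{A}_0$ of $(x_0,A_0)$, prepends $(x_0,A_0)$ to a $\preceq_H$--decreasing sequence so that Proposition \ref{lem4}(v) applies with $\alpha=\varepsilon$, deduces (C0) via (C1), invokes Theorem \ref{c-hzH}, and closes with the same $(\ref{r-hk})$-based contradiction to get $d(\overline{x},x_0)<\varepsilon$. The only cosmetic difference is that you factor the (C0) check through Proposition \ref{rel-c} as (Ca)$\wedge$(C1), which the paper does implicitly.
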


\begin{proof}
We apply Theorem \ref{c-hzH} to
\[
\mathcal{A}_{0}:=\left\{  (x,A)\in\mathcal{A}\mid(x,A)\preceq_{H}(x_{0}
,A_{0})\right\}
\]
preordered by $\preceq_{H}$. For this consider $\left(  (x_{n} ,A_{n})\right)
_{n\geq1}\subset\mathcal{A}_{0}$ a $\preceq_{H}$--decreasing sequence. Then,
setting $(x_{n}^{\prime}, A_{n}^{\prime}):=(x_{n-1},A_{n-1})$ $(\in
\mathcal{A}_{0})$ for $n\geq1$, the sequence $\left(  (x_{n}^{\prime}%
,A_{n}^{\prime})\right)  _{n\geq1}\subset\mathcal{A}_{0}$ is still
$\preceq_{H}$--decreasing. Since $A_{0}=A_{1}^{\prime}\not \subset
A_{n}^{\prime}+\varepsilon H+K=A_{n-1}+\varepsilon H+K$ for $n\geq1$, by
Proposition \ref{lem4}(v) we obtain that $(x_{n})_{n\geq1}$ is Cauchy. By
(C1), $\left(  (x_{n},A_{n})\right)  _{n\geq1}$ is minorized in $\mathcal{A}$,
and so in $\mathcal{A}_{0}$, too. Hence (C0) holds. Applying Theorem
\ref{c-hzH} to $(x_{0},A_{0})$ $(\in\mathcal{A}_{0})$, we get $(\overline
{x},\overline{A})\in\mathcal{A}_{0}$ such that $(\overline{x},\overline
{A})\preceq_{H}(x_{0},A_{0})$ and $\mathcal{A}_{0}\ni(x^{\prime},A^{\prime
})\preceq_{H}(\overline{x},\overline{A})$ implies $x^{\prime}=\overline{x}$.

Take $\mathcal{A}\ni(x^{\prime},A^{\prime})\preceq_{H}(\overline{x}%
,\overline{A})$; hence $(x^{\prime},A^{\prime})\in\mathcal{A}_{0}$, and so
$x^{\prime}=\overline{x}$. We have to show that $d(\overline{x},x_{0}%
)<\varepsilon$. In the contrary case, $d(\overline{x},x_{0})\geq\varepsilon$.
Taking into account that $(\overline{x},\overline{A})\preceq_{H}(x_{0},A_{0}
)$, we get
\[
A_{0}\subset\overline{A}+d(\overline{x},x_{0})H+K\subset\overline
{A}+\varepsilon H+[d(\overline{x},x_{0})-\varepsilon]H+K\subset\overline
{A}+\varepsilon H+K,
\]
contradicting the hypothesis $A_{0}\not \subset \overline{A}+\varepsilon H+K$.
The proof is complete.
\end{proof}

\medskip

Of course, the conclusions of Theorems \ref{zv-t2-bhH} and \ref{zv-t3-qiu}
remain valid if $(X,d)$ is complete and $(\mathcal{A},\preceq_{H})$ satisfies
condition (C'1) instead of (C1). Observe that condition (C'1) is introduced in
\cite[Cor.\ 35]{Ham:05} for $d$ replaced by a regular premetric $\varphi
:X\times X\rightarrow\mathbb{R}_{+}$ in the definition of $\preceq_{H}$; for
$\varphi=d$, \cite[Cor.\ 35]{Ham:05} follows from Theorem \ref{zv-t2-bhH} (i)
because (A2) implies that $0\notin\operatorname*{cl}\nolimits_{seq}(K+K^{0})$
[see Lemma \ref{lem0} (i)]. Note that the conclusion of \cite[Th.\ 4.1]%
{Qiu:16b} is valid only for that $\gamma>0$ appearing in condition
(Q3). In this case \cite[Th.\ 4.1]{Qiu:16b} follows from Theorems
\ref{zv-t2-bhH} for $H\subset D\backslash(-D)$ instead of
$0\notin\operatorname*{vcl}(H+K)$ taking
$\mathcal{A}:=\big\{(x,\{f(x)\})\mid x\in X\big\}$; moreover,
conclusion (b) implies the more usual estimate
$d(x_{0},\hat{x})<\varepsilon/\gamma$. Here, as used in
\cite{Qiu:16b}, for $A\subset Y$,
\[
\operatorname*{vcl}A=\{y\in Y\mid\exists v\in Y,~\exists(\lambda_{n}%
)\subset\mathbb{R}_{+}\text{ with }\lambda_{n}\rightarrow0,~\forall
n\in\mathbb{N}:y+\lambda_{n}v\in A\}.
\]

Theorem 5.3 of the recent paper \cite{Qiu:17} also follows from Theorem
\ref{zv-t3-qiu} applied with $X$ replaced by $S(x_{0}))$ without using the
condition $0\notin\operatorname*{vcl}(H+K)$, but just $0\notin(H+K)$ with a
(somewhat) stronger conclusion. Note also that in Theorem \ref{zv-t3-qiu} we
do not use any topology on ${Y}$ (in such a case, as mentioned in the
preliminaries, we could furnish $Y$ with the core convex topology).

Indeed, because of (ii) in \cite[Thm. 5.3]{Qiu:17}, one needs only to verify
(C1): Consider a decreasing sequence $(x_{n},f(x_{n}))$. Now, condition (ii)
implies that $(x_{n})$ is Cauchy [use Proposition \ref{lem4} under condition
(v)], and condition (i) implies that $(x_{n})$ is convergent to some $x\in X$
with $(x,f(x))\leq(x_{n},f(x_{n}))$.

The conditions below depend on the (uniformity defined by the) metric $d$, and
they do not depend on $H$. Condition (C'2) corresponds to \cite[(H2)]%
{TamZal:11} and \cite[(H2)]{KhaTamZal:15b}; they will be used for getting a
version of Theorem \ref{zv-t3-qiu} similar to the classic EVP.

\begin{description}
\item[\textbf{(C'2)}] \label{zv-H2H} $\forall\big((x_{n},A_{n})\big)_{n\geq
1}\subseteq\mathcal{A}$ with $(A_{n})_{n\geq1}$ $\leq_{K}^{l}$--decreasing and
$x_{n}\rightarrow x\in X:\exists A\in2^{Y}$ such that $(x,A)\in\mathcal{A}$
and $A\leq_{K}^{l}A_{n}$ $\forall n\geq1$,

\item[\textbf{(Ca2)}] $\forall\big((x_{n},A_{n})\big)_{n\geq1}\subseteq
\mathcal{A}$ with $(A_{n})_{n\geq1}$ $\leq_{K}^{l}$--decreasing and
$(x_{n})_{n\geq1}$ Cauchy~: $(x_{n})_{n\geq1}$ is convergent.
\end{description}

\begin{remark}
\label{rem4.6}{\rm Observe that $(X,d)$ complete implies (Ca2), and
(Ca2) implies (Ca1) wrt $\preceq_{H}$. Having a set-valued mapping
$\Gamma: X\rightrightarrows Y$, observe that
$\mathcal{A}:=\mathcal{A}_{\Gamma}$ verifies condition (C'2) iff for
every sequence $(x_{n})_{n\geq1}\subset X$ one has
$\Gamma(x)\leq^{l}_{K}\Gamma(x_{n})$ for $n\geq1$ whenever
$x_{n}\rightarrow x\in X$ and $\left(  \Gamma(x_{n})\right)
_{n\geq1}$ is $\leq^{l}_{K}$--decreasing; in such a case $\Gamma$ is
called $K$-sequentially lower monotone ($K$-s.l.m.\ for short) by
Qiu \cite[Def.\ 2.1]{Qiu:12}. Moreover, in this case (Ca2) reduces
to the $(\Gamma,K)$--lower completeness of $(X,d)$ as defined in
\cite[Def.\ 2.2]{Qiu:12} when $F=F_{H}$.}
\end{remark}

In the next result we provide several conditions; each of them together with
(C'2) implies (C'1). Recall that the set $A\subset Y$ is closed in the
direction $v\in Y$, or $v$--closed, if $y\in A$ whenever $y+\alpha_{n}v\in A$
for $n\geq1$ and $\mathbb{R}\ni\alpha_{n}\rightarrow0$; $A$ is lineally closed
if $E$ is closed in any direction $v\in Y$, or equivalently
$E=\operatorname*{vcl}E$.

Recall that our blanket assumption (H) is working, and so $A\neq\emptyset$ for
every $A\in\Pr_{2^{Y}}(\mathcal{A})$.

\begin{proposition}
\label{lem3}Let $(\mathcal{A},\preceq_{H})$ verify condition
\emph{(C'2)}. Then condition \emph{(C'1)} is verified provided one
of the following conditions holds:

\emph{(i)} $A+\lambda H+K$ is $v$-closed for some $v \in H+K$ and all
$\lambda>0$ and all $A\in\Pr_{2^{Y}}(\mathcal{A});$

\emph{(ii)} $H$ is bounded and cs-complete, and $A+K$ is seq-closed for every
$A\in\Pr_{2^{Y}}(\mathcal{A})$,

\emph{(iii)} \emph{(a)} the topology of $Y$ is generated by the
family $\mathcal{Q}$ of seminorms, \emph{(b)} either $H$ is
cs--complete or else $H$ is cs--closed and $(Y,\mathcal{Q})$ is
$\ell^{\infty }$--complete\footnote{$(Y,\mathcal{Q})$ is
$\ell^{\infty}$--complete if for every sequence
$(y_{n})_{n\geq1}\subset Y$, the series $\sum_{n\geq1}y_{n}$ is
convergent provided $\sum_{n\geq1}q(y_{n})$ is convergent for every
$q\in\mathcal{Q}$ (see \cite{Qiu:01}).}, \emph{(c)} for every $q\in
\mathcal{Q}$ there exists $z_{q}^{\ast}\in K^{+}$ such that
$q(h)\leq z_{q}^{\ast}(h)$ for every $h\in H$, and \emph{(d)} for
each $A\in\Pr_{2^{Y} }(\mathcal{A})$, $A$ is $K^{+}$--bounded and
$A+K$ is seq-closed.

\emph{(iv)} $Y$ is a reflexive Banach space, $H$ is convex and closed, $A$ is
$K$--closed and quasi $K$--bounded for every $A\in\Pr_{2^{Y}}(\mathcal{A})$,
and there exists $\alpha>0$ such that
\begin{equation}
\left\Vert h+k\right\Vert \geq\alpha\left\Vert h\right\Vert \quad\forall h\in
H,\ \forall k\in K. \label{zv-eq-ng3b}%
\end{equation}

\end{proposition}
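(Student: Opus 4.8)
The plan is to derive (C'1) from (C'2) by an \emph{upgrading} argument: (C'2) already produces a candidate limit set, and each of (i)--(iv) is used only to sharpen a $\leq^{l}_{K}$-relation into the required $\preceq_{H}$-relation.

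First I would fix a $\preceq_{H}$-decreasing sequence $\big((x_{n},A_{n})\big)_{n\geq1}\subseteq\mathcal{A}$ with $x_{n}\to x\in X$. By (\ref{zv-eq1-a}) the sets $(A_{n})_{n\geq1}$ form a $\leq^{l}_{K}$-decreasing sequence, so (C'2) supplies $A$ with $(x,A)\in\mathcal{A}$ (hence $A\neq\emptyset$ by (H)) and $A_{n}\subseteq A+K$ for all $n$. It then remains to check $(x,A)\preceq_{H}(x_{n},A_{n})$, i.e. $A_{n}\subseteq A+d(x,x_{n})H+K$, for every $n$. Fix $n$ and put $\delta:=d(x,x_{n})$; if $\delta=0$ (so $x=x_{n}$) this is just $A_{n}\subseteq A+K$, available from (\ref{zv-eq1-c}), so assume $\delta>0$. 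Transitivity of $\preceq_{H}$ gives $(x_{m},A_{m})\preceq_{H}(x_{n},A_{n})$ for $m\geq n$, hence $A_{n}\subseteq A_{m}+d(x_{n},x_{m})H+K\subseteq A+\lambda_{m}H+K$ with $\lambda_{m}:=d(x_{n},x_{m})\to\delta$. So I am reduced to showing: each $y\in A_{n}$ lies in $A+\delta H+K$, knowing $y\in A+\lambda_{m}H+K$ for all large $m$. If $\lambda_{m}\geq\delta$ for infinitely many $m$, then $\lambda_{m}H\subseteq\delta H+K$ by (\ref{r-hk}) for such $m$ and the claim is immediate; otherwise $\lambda_{m}<\delta$ eventually, and I fix such a $y$, writing $y=b_{m}+\lambda_{m}h_{m}+k_{m}$ with $b_{m}\in A$, $h_{m}\in H$, $k_{m}\in K$.

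The driving identity, using (\ref{r-hk}), is that for any $v=h'+k'\in H+K$,
$$y+(\delta-\lambda_{m})v=b_{m}+\big[\lambda_{m}h_{m}+(\delta-\lambda_{m})h'\big]+\big[k_{m}+(\delta-\lambda_{m})k'\big]\in A+\delta H+K,$$
while $y+(\delta-\lambda_{m})v\to y$ since $\delta-\lambda_{m}\to0$. \emph{For (i)}, applied with $\lambda:=\delta$ and with the direction $v$ furnished by (i), $A+\delta H+K$ is $v$-closed, so $y\in A+\delta H+K$ and case (i) is done. \emph{For (ii), (iii), (iv)} the sets $A+\delta H+K$ need not be closed, so instead I would peel off the $H$-part: $y-\lambda_{m}h_{m}=b_{m}+k_{m}\in A+K$, and it suffices to produce $w\in\delta H$ with $y-w\in A+K$, since then $y\in w+A+K\subseteq A+\delta H+K$. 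For (iv): writing $b_{m}=c_{m}+\kappa_{m}$ with $c_{m}$ in the bounded set witnessing quasi-$K$-boundedness of $A$ and $\kappa_{m}\in K$, inequality (\ref{zv-eq-ng3b}) applied to $h_{m}$ and $\lambda_{m}^{-1}(\kappa_{m}+k_{m})\in K$ gives $\|y-c_{m}\|\geq\alpha\lambda_{m}\|h_{m}\|$, so $(h_{m})$ is bounded; reflexivity then yields $h_{m_{j}}\rightharpoonup h$ along a subsequence, with $h\in H$ ($H$ being closed and convex, hence weakly closed), and $y-\lambda_{m_{j}}h_{m_{j}}\rightharpoonup y-\delta h$ forces $y-\delta h\in A+K$ since $A+K$ is closed and convex. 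For (ii), (iii) the completeness hypotheses (cs-completeness, resp. cs-closedness together with $\ell^{\infty}$-completeness of $(Y,\mathcal{Q})$), the domination $q(h)\leq z^{\ast}_{q}(h)$ on $H$, and the $K^{+}$-boundedness of $A$ take their place: choosing the subsequence with $\delta-\lambda_{m_{j}}\leq2^{-j}$ and bounding $\big(z^{\ast}_{q}(h_{m_{j}})\big)_{j}$ for each $q\in\mathcal{Q}$ (from $z^{\ast}_{q}(y)\geq z^{\ast}_{q}(b_{m})+\lambda_{m}z^{\ast}_{q}(h_{m})$ and $\inf z^{\ast}_{q}(A)>-\infty$), one obtains convergence of the relevant convex series in the $h_{m_{j}}$, hence $w\in\delta H$, and then seq-closedness of $A+K$ gives $y-w\in A+K$.

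I expect the main obstacle to be exactly this last step in cases (ii) and (iii): because a bounded cs-complete set need not be sequentially compact, $w$ cannot be taken as an ordinary subsequential limit of $(h_{m})$ but has to be constructed as a convex series, so the completeness hypotheses and the seminorm-domination condition (c) must be combined with care to ensure both that this series converges in the original topology of $Y$ (where $A+K$ is only assumed sequentially closed) and that $y-w$ remains in $A+K$. Case (iv) is conceptually simpler but relies on the separating estimate (\ref{zv-eq-ng3b}) to bound $(h_{m})$ and on weak closedness of $A+K$ (from closedness plus convexity, or via Mazur's lemma applied to $y-\lambda_{m_{j}}h_{m_{j}}\in A+K$).
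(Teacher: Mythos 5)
Your setup (invoking (C'2) to get $A$ with $A_n\subseteq A+K$, then reducing to $A_n\subseteq A+d(x,x_n)H+K$) matches the paper, and your argument for case (i) is sound. The genuine gap is in how you witness the membership $y\in A+\lambda_m H+K$: you take, for each $m$ separately, an unrelated decomposition $y=b_m+\lambda_m h_m+k_m$. The paper instead telescopes along \emph{consecutive} terms of the sequence, writing $y=y_{p+1}+\sum_{l=1}^{p}\delta_l h_l+k_p'$ with $\delta_l=d(x_l,x_{l+1})$ and $y_{p+1}\in A_{p+1}\subseteq A+K$ (see (\ref{r-Qiu2})). This produces two things your decomposition destroys: a single convex series $\sum_l\delta_l h_l$ with elements of $H$ whose partial sums are exactly what cs-completeness, condition (iii)(c) and $\ell^\infty$-completeness act on; and a $\leq_K$-decreasing chain of anchor points in $A+K$. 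In your version there simply is no ``relevant convex series in the $h_{m_j}$'': the $h_{m_j}$ are independent elements of $H$ chosen separately for each $j$, and a bounded sequence in a cs-complete (non-seq-compact) set admits no convergent subsequence and no distinguished convergent convex series built from it, so the element $w\in\delta H$ with $y-w\in A+K$ that you need in cases (ii) and (iii) is never produced. You correctly identify this as ``the main obstacle,'' but the obstacle is not overcome — it is created by the reduction itself.

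Case (iv) fails for the related reason that the anchor varies with $j$. You conclude $y-\delta h\in A+K$ ``since $A+K$ is closed and convex,'' but $A+K$ is \emph{not} convex here: $A$ is only assumed $K$-closed and quasi $K$-bounded, so neither weak sequential closedness nor Mazur's lemma applies to $y-\lambda_{m_j}h_{m_j}=b_{m_j}+k_{m_j}$ — convex combinations of these land only in $\operatorname{conv}(A+K)$. The paper's proof avoids this by exploiting the monotone chain $y\geq_K y_{p+1}+\eta_p h_p'\geq_K y_{p_{l+1}}+\eta_p h_p'$, so that an entire block of inequalities shares the \emph{same} anchor $y_{p_{l+1}}\in A+K$; only then does taking convex combinations of the $h_p'$ (furnished by Mazur from the weakly convergent subsequence) keep the resulting points inside $A+K$, where ordinary closedness finishes the argument. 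To repair your proof you would have to abandon the two-point decomposition and rebuild the telescoping series along $d(x_p,x_{p+1})$, at which point you recover the paper's argument.
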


\begin{proof}
Consider $\left(  (x_{n},A_{n})\right)  _{n\geq1}\subset\mathcal{A}$ a
$\preceq_{H}$--decreasing sequence with $x_{n}\rightarrow x\in X$. Since
$(A_{n})_{n\geq1}$ is clearly $\leq_{K}^{l}$--decreasing, by (C'2), we get
$A\in2^{Y}$ such that $(x,A)\in\mathcal{A}$ and $A\leq_{K}^{l}A_{n}$, that is
$A_{n}\subset A+K$, for $n\geq1$. We have to show that $(x,A)\preceq_{H}%
(x_{n},A_{n})$ for $n\geq1$. Clearly, using the transitivity of $\preceq_{H}$,
it is sufficient to have that $(x,A)\preceq_{H}(x_{n},A_{n})$ for $n\in P$
with $P$ an infinite subset of $\mathbb{N}\backslash\{0\}$. Because
$(x,A)\preceq_{H}(x_{n},A_{n})$ if $x_{n}=x$, we may assume that $x_{n}\neq x$
for $n\geq1$. Moreover, because $x_{n}\rightarrow x$ (passing to a subsequence
if necessary) we may assume that $\sum_{n\geq1}\delta_{n}<\infty$, where
$\delta_{n}:=d(x_{n},x_{n+1})$ $(>0)$ for $n\geq1$. Replacing $(x_{n}%
)_{n\geq1}$ with $(x_{n+p-1})_{n\geq1}$ for $p\geq1$, it sufficient to show
that $A_{1}\subset A+d(x_{1},x)H+K$. Set $\mu:=\sum_{l\geq1}\delta_{l}$,
$\mu_{0}:=0$ and $0<\mu_{p}:=\sum\nolimits_{l=1}^{p}\delta_{l}$ for $p\geq1$;
clearly, $\mu_{p}\rightarrow\mu$ for $p\rightarrow\infty$.

Take $y\in A_{1}$. Because $A_{1}\subset A_{2}+\delta_{1}H+K$, there exist
$y_{2}\in A_{2}$, $h_{1}\in H$, $k_{1}\in K$ such that $y_{1}:=y=y_{2}%
+\delta_{1}h_{1}+k_{1}$. Because $A_{2}\subset A_{3}+\delta_{2}H+K$, there
exist $y_{3}\in A_{3}$, $h_{2}\in H$, $k_{2}\in K$ such that $y_{2}%
=y_{3}+\delta_{2}h_{2}+k_{2}$. Continuing in this way we get the sequences
$(y_{p})_{p\geq1}\subset Y$, $(h_{p})_{p\geq1}\subset H$, $(k_{p})_{p\geq
1}\subset K$ such that $A_{p}\ni y_{p}=y_{p+1}+\delta_{p}h_{p}+k_{p}$ for
$p\geq1$; in particular, $(y_{p})_{p\geq1}$ is $\leq_{K}$--decreasing. Hence
\begin{equation}
y=y_{p+1}+\sum\nolimits_{l=1}^{p}\delta_{l}h_{l}+k_{p}^{\prime}=u_{p}%
+\sum\nolimits_{l=1}^{p}\delta_{l}h_{l}=u_{p}^{\prime}+\mu_{p} h_{p}^{\prime},
\label{r-Qiu2}%
\end{equation}
where $k_{p}^{\prime}:=\sum\nolimits_{l=1}^{p}k_{l}\in K$ and $u_{p}%
:=y_{p+1}+k_{p}^{\prime}\in A+K$ because $y_{p+1}\in A_{p+1}\subset A+K$, and
$u_{p}^{\prime}\in A+K$, $h_{p}^{\prime}\in H$; we used the convexity of $H+K$.

Taking into account (\ref{r-hk}), if $\mu_{p}\ge d(x_{1},x)$ for some $p\ge1$,
from the last expression of $y$ in (\ref{r-Qiu2}) we obtain that $y\in
A+d(x_{1},x)H+K$. So, we may (and do) assume that $\mu_{p}< d(x_{1},x)$ for
$p\ge1$. Consequently $\mu=d(x_{1},x)$ and $\mu_{p}=d(x_{1},x_{p+1})$ for
$p\ge1$.

Assume first that (i) holds. From (\ref{r-Qiu2}) we have that $y=u_{p}%
^{\prime}+\mu_{p} h_{p}^{\prime}$, and so
\[
y+\alpha_{p}\bar y=u_{p}^{\prime}+\mu_{p} h_{p}^{\prime}+\alpha_{p}v \in
A+K+(\mu_{p}+\alpha_{p}) H+K=A+\mu H+K,
\]
for every $p\ge1$, where $\alpha_{p}:=\mu-\mu_{p} >0$. Since $A+\mu H+K$ is
$v$-closed and $\alpha_{p}\to0$, we obtain that $y\in A+\mu H+K=A+d(x_{1},x)
H+K$.

Assume that (ii) holds. Because $H$ is bounded and cs-complete, the series
$\sum\nolimits_{l=1}^{\infty}\delta_{l}h_{l}$ converges to some $\bar v\in Y$
and $h:=\mu^{-1}\bar v\in H$. Using the second equality in (\ref{r-Qiu2}), we
get
\[
u_{p}=y-\sum\nolimits_{l=1}^{p}\delta_{l}h_{l}\rightarrow u:=y-\mu
h\in\operatorname*{cl}\nolimits_{seq}(A+K)=A+K,
\]
and so $y\in A+\mu H+K=A+d(x_{1},x) H+K$.

Assume now that (iii) holds. Fix $q\in\mathcal{Q}$ and take $z_{q}^{\ast}\in
K^{+}$ such that $q(h)\leq z_{q}^{\ast}(h)$ for $h\in H$. Because $A$ is
$K^{+}$--bounded, $\gamma_{q}:=\inf z_{q}^{\ast}(A)=\inf z_{q}^{\ast}
(A+K)\in\mathbb{R}$. From (\ref{r-Qiu2}) we get
\[
z_{q}^{\ast}(y)=z_{q}^{\ast}(u_{p})+\sum\nolimits_{l=1}^{p}\delta_{l}%
z_{q}^{\ast}(h_{l})\geq\gamma_{q}+\sum\nolimits_{l=1}^{p}\delta_{l}%
q(h_{l})=\gamma_{q}+\sum\nolimits_{l=1}^{p}q(\delta_{l}h_{l}),
\]
and so $\sum\nolimits_{l=1}^{p}q(\delta_{l}h_{l})\leq z_{q}^{\ast}%
(y)-\gamma_{q}$ for all $p\geq1$. Hence
\begin{equation}
\sum\nolimits_{l=1}^{\infty}q(\delta_{l}h_{l})<\infty\quad\forall
q\in\mathcal{Q}. \label{r-li}%
\end{equation}
This implies hat the sequence $\left(  \sum\nolimits_{l=1}^{p}\delta_{l}%
h_{l}\right)  _{p\geq1}$ is Cauchy; assuming that $H$ is cs-complete, there
exists $h\in H$ such that $\sum\nolimits_{l=1}^{\infty}\delta_{l}h_{l}=\mu h$.
Assuming that $(Y,\mathcal{Q})$ is $\ell^{\infty}$, (\ref{r-li}) implies that
the series $\sum\nolimits_{l=1}^{\infty}\delta_{l}h_{l}$ converges to some
$v\in Y$; assuming more that $H$ is cs--closed, we have again that
$h:=\mu^{-1}v\in H$. As in case (ii) above we obtain that $y\in A+d(x_{1}
,x)H+K$.

Finally, assume that (iv) holds. Taking into account (\ref{r-Qiu2}) and the
fact that $A$ is quasi $K$--bounded, we get also a bounded sequence $(b_{p}%
)_{p}\subset Y$ and the sequences $(k_{p}^{\prime\prime})_{p\geq1}%
,(k_{p}^{\prime\prime\prime})_{p\geq1}\subset K$ such that
\begin{equation}
y=y_{p+1}+\eta_{p}h_{p}^{\prime}+k_{p}^{\prime\prime}=u_{p}^{\prime}+\mu
_{p}h_{p}^{\prime}=b_{p}+\eta_{p}h_{p}^{\prime}+k_{p}^{\prime\prime\prime
}\quad\forall p\geq1.\label{r-Qiu3}%
\end{equation}

Using the last expression of $y$ in (\ref{r-Qiu3}), from (\ref{zv-eq-ng3b}) we
obtain that $\left\Vert y-b_{p}\right\Vert \geq\alpha\eta_{p}\left\Vert
h_{p}^{\prime}\right\Vert $ for $p\geq1$, and so $(h_{p}^{\prime})_{p\geq1}$
is bounded. Because $Y$ is reflexive and $H$ is (weakly) closed,
$(h_{p}^{\prime})_{p\geq1}$ has a subsequence converging weakly to $h\in H$,
and so $h\in H_{p}:=\overline{\operatorname*{conv}}\{h_{l}^{\prime}\mid l\geq
p\}$ for every $p\geq1$. Fix some $p_{1}\geq1$ such that $d(x_{p},x)<1$ for
every $p\geq p_{1}$. Because $h\in H_{p_{1}}$, there exists $p_{2}>p_{1}$ and
$(\lambda_{p}^{1})_{p_{1}\leq p<p_{2}}\subseteq\mathbb{R}_{+}$ such that
$\sum_{p=p_{1}}^{p_{2}-1}\lambda_{p}^{1}=1$ and $\left\Vert h^{1}-h\right\Vert
<1$, where $h^{1}:=\sum_{p=p_{1}}^{p_{2}-1}\lambda_{p}^{1}h_{p}^{\prime}\in
H$. Increasing if necessary $p_{2}$, we may (and do) assume that
$d(x_{p},x)<1/2$ for every $p\geq p_{2}$. Continuing in this way, we find an
increasing sequence $(p_{l})_{l\geq1}\subseteq\mathbb{N}^{\ast}$ such that for
each $l\geq1$ one has $d(x_{p},x)<1/l$ for $p\geq p_{l}$, and there exists
$(\lambda_{p}^{l})_{p_{l}\leq p<p_{l+1}}\subseteq\mathbb{R}_{+}$ such that
$\sum_{p=p_{l}}^{p_{l+1}-1}\lambda_{p}^{l}=1$ and $\left\Vert h^{l}%
-h\right\Vert <1/l$, where $h^{l}:=\sum_{p=p_{l}}^{p_{l+1}-1}\lambda_{p}%
^{l}h_{p}^{\prime}\in H$. Because $\eta_{p}=d(x_{1},x_{p+1})\geq
d(x_{1},x)-d(x_{p+1},x)$, we have that $\eta_{p}\geq d(x_{1},x)-1/l$ for every
$p\geq p_{l}$. Using the first expression of $y$ in (\ref{r-Qiu3}) and the
monotonicity of $(y_{p})_{p\geq1}$, we get
\[
y\geq_{K}y_{p+1}+\eta_{p}h_{p}^{\prime}\geq_{K}y_{p+1}+(d(x_{1},x)-1/l)h_{p}
^{\prime}\geq y_{p_{l+1}}+(d(x_{1},x)-1/l)h_{p}^{\prime}%
\]
for $p_{l}\leq p<p_{l+1}$. Multiplying by $\lambda_{p}^{l}\geq0$ and summing
up for $p_{l}\leq p<p_{l+1}$ we get $y\geq_{K}y_{p_{l+1}}+(d(x_{1}%
,x)-1/l)h^{l}$, and so $y-(d(x_{1},x)-1/l)h^{l}\in A+K$ for $l\geq1$. Passing
to the limit for $l\rightarrow\infty$ we obtain that $y-d(x_{1},x)h\in A+K$
because $A+K$ is closed. Hence $y\in A+d(x_{1},x)H+K$.
\end{proof}

\medskip

Note that $0\notin\operatorname*{cl}(H+K)$ when condition (iii)~(c) in
Proposition \ref{lem3} is verified.

In the next result, we provide several conditions ensuring the $v$-closedness
of the set $A+\lambda H +K$ appearing in condition (i) of Proposition
\ref{lem3}.

\begin{proposition}
\label{lem3b}Let $A\subset Y$ be nonempty, $v\in Y\backslash\{0\}$, and
$\lambda>0$. Then $A+\lambda H +K$ is $v$-closed provided one of the following
conditions holds:

\emph{(i)} $H$ is seq-compact, and $A+K$ is seq-closed;

\emph{(ii)} $H+K$ is seq-closed, and $A$ is seq-compact;

\emph{(iii)} $H$ is a singleton, and $A+K$ is $v$--closed;

\emph{(iv)} $H+K$ is $v$-closed, and $A$ is finite.
\end{proposition}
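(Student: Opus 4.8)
The plan is to unwind the definition of $v$-closedness directly. Fix a sequence $(\alpha_n)_{n\ge1}\subset\mathbb{R}$ with $\alpha_n\to0$ and $y+\alpha_n v\in A+\lambda H+K$ for all $n$, and write $y+\alpha_n v=a_n+\lambda h_n+k_n$ with $a_n\in A$, $h_n\in H$, $k_n\in K$; the goal is $y\in A+\lambda H+K$. Before treating the four cases I would record two elementary facts used throughout: first, since $\lambda>0$ and $K$ is a cone, $\lambda H+K=\lambda(H+K)$; second, positive scalar multiples and translates of a seq-closed (resp.\ $v$-closed) set are again seq-closed (resp.\ $v$-closed), and any subsequence of $(\alpha_n)$ still tends to $0$.

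For (i), seq-compactness of $H$ lets me pass to a subsequence along which $h_{n_p}\to h\in H$; then $a_{n_p}+k_{n_p}=y+\alpha_{n_p}v-\lambda h_{n_p}\to y-\lambda h$, and since $a_{n_p}+k_{n_p}\in A+K$ and $A+K$ is seq-closed, $y-\lambda h\in A+K$, whence $y\in A+\lambda h+K\subseteq A+\lambda H+K$. Case (ii) is symmetric: seq-compactness of $A$ gives a subsequence with $a_{n_p}\to a\in A$, so $\lambda h_{n_p}+k_{n_p}\to y-a$, which lies in the seq-closed set $\lambda(H+K)=\lambda H+K$, giving $y-a\in\lambda H+K$ and hence $y\in A+\lambda H+K$. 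Note that in both (i) and (ii) one ends up with an honestly convergent sequence, so plain sequential closedness — not directional closedness — is what is invoked.

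For (iii), write $\{h_0\}=H$, so that $A+\lambda H+K=(A+K)+\lambda h_0$; from $(y-\lambda h_0)+\alpha_n v\in A+K$ for all $n$ and the $v$-closedness of $A+K$ we get $y-\lambda h_0\in A+K$, i.e.\ $y\in A+\lambda H+K$. For (iv), since $A$ is finite there is a single $a\in A$ with $a_n=a$ for infinitely many $n$; along that subsequence $(y-a)+\alpha_n v=\lambda h_n+k_n\in\lambda(H+K)$, and as the remaining parameters still tend to $0$ and $\lambda(H+K)=\lambda H+K$ is $v$-closed, $y-a\in\lambda H+K$, so $y\in A+\lambda H+K$. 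Cases (iii) and (iv) genuinely use directional ($v$-) closedness because the $\alpha_n$-perturbation is never eliminated. The only point requiring any care is the bookkeeping when passing to subsequences and the observation that $\lambda H+K$ is closed under the relevant operation; there is no real obstacle, so the whole argument is a short, routine verification.
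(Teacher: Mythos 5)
Your argument is correct and is essentially the paper's proof written out at the level of sequences: in cases (i) and (ii) your extraction of a convergent subsequence of $(h_n)$ (resp.\ $(a_n)$) is exactly the content of the paper's Lemma \ref{lem0}~(i), by which the paper concludes that the whole set $A+\lambda H+K$ is even seq-closed, and in case (iv) your pigeonhole choice of a single $a\in A$ recurring infinitely often is precisely the proof that a finite union of $v$-closed sets is $v$-closed, which is how the paper phrases it. All the decompositions, the use of $\lambda H+K=\lambda(H+K)$, and the treatment of (iii) coincide with the paper's, so there is nothing to add.
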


\begin{proof}
As already observed, any sequentially closed set is $v$-closed.

(i) Clearly, $\lambda H$ is seq-compact, and so, using Lemma \ref{lem0}~(i),
$A+\lambda H+K$ $[=(A+K)+\lambda H]$ is seq-closed.

(ii) Since $\lambda(H+K)=\lambda H+K$ and the first set is seq-closed, we
obtain that $A+\lambda H+K$ is seq-closed as in (i).

(iii) The assertion is obvious.

(iv) Since $H+K$ is $v$-closed, $\lambda H+K$ $[=\lambda(H+K)]$ is $v$-closed,
and so $u+\lambda H+K$ is $v$-closed for every $u\in A$. The conclusion
follows from the fact that the union of a finite family of $v$-closed sets is
$v$-closed.
\end{proof}

\begin{remark}
\label{rem3}{\rm When $(X,d)$ is complete, in Theorems
\ref{zv-t2-bhH} and \ref{zv-t3-qiu} one can replace the hypothesis
that (C1) holds with (C'2) together with one of the conditions
(i)--(iv) from Proposition \ref{lem3}. Even more, instead of
assuming that $(X,d)$ is complete in the resulting statement one can
assume that (Ca2) or even (Ca1) holds. Moreover, when one needs to
have the conclusion for a given $(x_{0},A_{0})\in\mathcal{A}$
(instead of any $(x,A)\in\mathcal{A}$), one may replace
$\mathcal{A}$ with
$\mathcal{A}_{0}:=\{(x,A)\in\mathcal{A}\mid(x,A)\preceq_{H}(x_{0},A_{0})\}$
in the hypothesis of the respective statement.}
\end{remark}

We exemplify (partially) Remark \ref{rem3} with the next result.

\begin{theorem}
\label{zv-c3-qiu}Let $(\mathcal{A},\preceq_{H})$ verify conditions
\emph{(C'2)} and \emph{(Ca1)}, as well as one of the conditions
\emph{(i)--(iv)} of Proposition \ref{lem3}. Assume that $(x_{0},A_{0}%
)\in\mathcal{A}$ and $\varepsilon>0$ are such that $A_{0}\not \subset
A+\varepsilon H+K$ for all $A\in\Pr_{2^{Y}}(\mathcal{A})$. Then for every
$\lambda>0$ there exists $(x_{\lambda},A_{\lambda})\in\mathcal{A}$ such that
\emph{(a)} $A_{0}\subset A_{\lambda}+\lambda d(x_{\lambda},x_{0})H+K$,
\emph{(b)} $d(x_{\lambda},x_{0})<\varepsilon/\lambda$, \emph{(c)} $A_{\lambda
}\not \subset A+\lambda d(x,x_{\lambda})H+K$ for every $(x,A)\in\mathcal{A}$
with $x\neq x_{\lambda}$.
\end{theorem}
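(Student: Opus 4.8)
The plan is to obtain the statement from Theorem \ref{zv-t3-qiu} by rescaling the metric — this is exactly the device behind the passage from the ``strong'' form of the Ekeland principle to its $(\varepsilon,\lambda)$-form, and it is the content of Remark \ref{rem3}. Fix $\lambda>0$ and put $d_\lambda:=\lambda d$. Then $(X,d_\lambda)$ is a metric space having the same Cauchy sequences and the same convergent sequences as $(X,d)$, and the preorder on $X\times 2^Y$ induced by $F_H$ built from $d_\lambda$ instead of $d$ is
\[
(x_1,A_1)\preceq_H^\lambda(x_2,A_2)\ :\iff\ A_2\subset A_1+\lambda d(x_1,x_2)H+K .
\]
First I would check that $(\mathcal{A},\preceq_H^\lambda)$ inherits all the hypotheses: conditions (C'2) and (Ca1) refer to the metric only through convergence and through Cauchyness of sequences in $X$, both of which are unaffected by replacing $d$ by $\lambda d$; and conditions (i)--(iv) of Proposition \ref{lem3} are statements purely about $Y$, $K$ and $H$, hence untouched by rescaling $d$. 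So $(\mathcal{A},\preceq_H^\lambda)$ satisfies (C'2), (Ca1) and the same one of (i)--(iv).

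The next step is to upgrade these to condition (C1) for $(\mathcal{A},\preceq_H^\lambda)$. By Proposition \ref{lem3} applied with $d_\lambda$ in the role of $d$, condition (C'2) together with the relevant one of (i)--(iv) yields (C'1); and (C'1) together with (Ca1) yields (C1), by the implication $(\text{C'1})\wedge(\text{Ca1})\Rightarrow(\text{C1})$ in \eqref{r-c0011} of Proposition \ref{rel-c}. Thus $(\mathcal{A},\preceq_H^\lambda)$ verifies (C1) with respect to the metric $d_\lambda$.

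Now I would invoke Theorem \ref{zv-t3-qiu} for the metric space $(X,d_\lambda)$, the set $\mathcal{A}$ preordered by $\preceq_H^\lambda$, the point $(x_0,A_0)\in\mathcal{A}$ and the number $\varepsilon>0$; the required hypothesis $A_0\not\subset A+\varepsilon H+K$ for all $A\in\Pr_{2^Y}(\mathcal{A})$ is precisely the one assumed, and it does not involve the metric. The theorem produces $(x_\lambda,A_\lambda)\in\mathcal{A}$ with $(x_\lambda,A_\lambda)\preceq_H^\lambda(x_0,A_0)$, with $d_\lambda(x_\lambda,x_0)<\varepsilon$, and such that $(x',A')\in\mathcal{A}$ with $(x',A')\preceq_H^\lambda(x_\lambda,A_\lambda)$ implies $x'=x_\lambda$. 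Unwinding the definitions: $(x_\lambda,A_\lambda)\preceq_H^\lambda(x_0,A_0)$ means $A_0\subset A_\lambda+\lambda d(x_\lambda,x_0)H+K$, which is (a); $d_\lambda(x_\lambda,x_0)<\varepsilon$ reads $\lambda d(x_\lambda,x_0)<\varepsilon$, i.e.\ (b); and, taken contrapositively, the minimality property says that for $(x,A)\in\mathcal{A}$ with $x\neq x_\lambda$ one cannot have $A_\lambda\subset A+\lambda d(x,x_\lambda)H+K$, which is (c). Since $\lambda>0$ was arbitrary, the proof is complete.

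There is no genuine obstacle here: the whole point is the observation that the three kinds of hypotheses — the topological conditions (C'2), (Ca1); the algebraic closedness/boundedness conditions (i)--(iv); and the non-domination of $A_0$ — are all invariant under replacing $d$ by $\lambda d$, after which Theorem \ref{zv-t3-qiu} applies verbatim. The one place to be mildly attentive is the bookkeeping with $\lambda$, since the bound variable named ``$\lambda$'' inside condition (i) of Proposition \ref{lem3} is unrelated to the fixed multiplier $\lambda$ of the present theorem; keeping the two notationally separate avoids confusion.
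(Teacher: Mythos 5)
Your proposal is correct and follows essentially the same route as the paper: rescale the metric to $d':=\lambda d$, observe that (C'2), (Ca1) and the chosen condition among (i)--(iv) of Proposition \ref{lem3} are unaffected, deduce (C'1) and then (C1) via (\ref{r-c0011}), and apply Theorem \ref{zv-t3-qiu}. The unwinding of (a)--(c) is also exactly as intended.
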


\begin{proof}
Set $d^{\prime}:=\lambda d$ with $\lambda>0$. Of course, for $(x_{n})_{n\ge
1}\subset X$ and $x\in X$ we have that $(x_{n})$ is $d$--Cauchy (resp.\ $x_{n}
\overset{d}{\rightarrow}x$) if and only if $(x_{n})$ is $d^{\prime}$--Cauchy
(resp.\ $x_{n}\overset{d^{\prime}}{\rightarrow}x$). Moreover, condition (*)
(among (i)--(iv)) is verified wrt $d$ iff (*) is verified wrt $d^{\prime}$. By
Proposition \ref{lem3}, $\mathcal{A}$ verifies (C'1) wrt $d^{\prime}$, and so,
by (\ref{r-c0011}), $\mathcal{A}$ verifies (C1) wrt $d^{\prime}$. Applying
Theorem \ref{zv-t3-qiu} we get the conclusion.
\end{proof}

\medskip

It is an easy matter to adapt the preceding results for $\mathcal{A}_{\Gamma}$
with $\Gamma:X\rightrightarrows Y$ because, as seen in Remark \ref{rem4.6},
$\mathcal{A}_{\Gamma}$ verifies (C'2) exactly when $\Gamma$ is $K$--s.l.m.,
$\mathcal{A}_{\Gamma}$ verifies (Ca2) wrt $\preceq_{H}$ exactly when $(X,d)$
is $(\Gamma,K)$--lower complete in the sense of \cite[Def.\ 2.2]{Qiu:12}, and
$\mathcal{A}_{0}:=\{(x,\Gamma(x))\mid x\in X,\ \Gamma(x_{0})\subset
\Gamma(x)+d(x,x_{0})H+K\}$ verifies (Ca1) wrt $\preceq_{H}$ exactly when
$(X,d)$ is $S(x_{0})$--dynamically complete in the sense of \cite[Def.\ 3.1]%
{Qiu:14}. So, we get the next version of Theorem \ref{zv-c3-qiu}.

\begin{corollary}
\label{c-qiu}Let $\Gamma:X\rightrightarrows Y$ be $K$--s.l.m., and let
$\mathcal{A}_{\Gamma}:=\left\{  (x,\Gamma(x))\mid x\in X\right\}  $ satisfy
\emph{(Ca1)} wrt $\preceq_{H}$. Assume that $x_{0}\in\operatorname*{dom}%
\Gamma$ and $\varepsilon>0$ are such that $\Gamma(x_{0})\not \subset
\Gamma(x)+\varepsilon H+K$ for all $x\in X$, and $\mathcal{A}_{\Gamma}$
verifies one of the conditions \emph{(i)--(iv)} of Proposition \ref{lem3}.
Then for every $\lambda>0$ there exists $x_{\lambda}\in X$ such that
$\Gamma(x_{0})\subset\Gamma(x_{\lambda})+\lambda d(x_{0},x_{\lambda})H+K$,
$d(x_{0},x_{\lambda})<\varepsilon/\lambda$, and $\Gamma(x_{\lambda
})\not \subset \Gamma(x)+\lambda d(x,x_{\lambda})H+K$ for every $x\in
X\backslash\{x_{\lambda}\}$.
\end{corollary}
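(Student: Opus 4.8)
The plan is to derive Corollary \ref{c-qiu} as the special case $\mathcal{A}:=\mathcal{A}_{\Gamma}$ of Theorem \ref{zv-c3-qiu}, so that essentially all the work amounts to matching hypotheses and then rewriting the conclusion in terms of $\Gamma$. First I would record that the blanket assumption (H) holds for $\mathcal{A}_{\Gamma}$: it is nonempty since $x_{0}\in\operatorname*{dom}\Gamma$, and $\Pr_{2^{Y}}(\mathcal{A}_{\Gamma})=\{\Gamma(x)\mid x\in\operatorname*{dom}\Gamma\}$ consists of nonempty sets by the very definition of $\operatorname*{dom}\Gamma$. Next I would invoke Remark \ref{rem4.6}: the assumption that $\Gamma$ is $K$--s.l.m.\ is precisely condition (C'2) for $(\mathcal{A}_{\Gamma},\preceq_{H})$, while (Ca1) and one of the conditions (i)--(iv) of Proposition \ref{lem3} are assumed by hypothesis. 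Finally, taking $A_{0}:=\Gamma(x_{0})\in\Pr_{2^{Y}}(\mathcal{A}_{\Gamma})$, the hypothesis $\Gamma(x_{0})\not\subset\Gamma(x)+\varepsilon H+K$ for all $x\in X$ yields in particular $A_{0}\not\subset A+\varepsilon H+K$ for every $A\in\Pr_{2^{Y}}(\mathcal{A}_{\Gamma})$, since $\Pr_{2^{Y}}(\mathcal{A}_{\Gamma})\subseteq\{\Gamma(x)\mid x\in X\}$; also $(x_{0},A_{0})\in\mathcal{A}_{\Gamma}$.

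With all the hypotheses of Theorem \ref{zv-c3-qiu} in place, I would apply it: for each $\lambda>0$ it produces $(x_{\lambda},A_{\lambda})\in\mathcal{A}_{\Gamma}$ such that $A_{0}\subset A_{\lambda}+\lambda d(x_{\lambda},x_{0})H+K$, $d(x_{\lambda},x_{0})<\varepsilon/\lambda$, and $A_{\lambda}\not\subset A+\lambda d(x,x_{\lambda})H+K$ for every $(x,A)\in\mathcal{A}_{\Gamma}$ with $x\neq x_{\lambda}$. Since $(x_{\lambda},A_{\lambda})\in\mathcal{A}_{\Gamma}$, we have $x_{\lambda}\in\operatorname*{dom}\Gamma$ and $A_{\lambda}=\Gamma(x_{\lambda})$; substituting $A_{\lambda}=\Gamma(x_{\lambda})$ and $A_{0}=\Gamma(x_{0})$ converts the first two statements into the first two assertions of the corollary, and $\Gamma(x_{\lambda})\neq\emptyset$ by (H).

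The only detail requiring a short additional argument is the last assertion of the corollary, which ranges over all $x\in X\setminus\{x_{\lambda}\}$, while conclusion (c) of Theorem \ref{zv-c3-qiu} only ranges over pairs of $\mathcal{A}_{\Gamma}$, that is, over $x\in\operatorname*{dom}\Gamma\setminus\{x_{\lambda}\}$; for such $x$ it gives exactly $\Gamma(x_{\lambda})\not\subset\Gamma(x)+\lambda d(x,x_{\lambda})H+K$. For $x\in X\setminus\operatorname*{dom}\Gamma$ one has $\Gamma(x)=\emptyset$, hence $\Gamma(x)+\lambda d(x,x_{\lambda})H+K=\emptyset$ by the Minkowski-sum convention, and the non-inclusion holds trivially because $\Gamma(x_{\lambda})\neq\emptyset$. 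Thus the last assertion holds for every $x\in X\setminus\{x_{\lambda}\}$, which finishes the proof. I do not anticipate any real obstacle here: everything of substance is contained in Theorem \ref{zv-c3-qiu} (and ultimately in Theorem \ref{zv-t3-qiu}), and this is a routine specialization together with the bookkeeping about $\operatorname*{dom}\Gamma$.
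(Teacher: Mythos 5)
Your proposal is correct and follows essentially the same route as the paper, which obtains Corollary \ref{c-qiu} precisely as the specialization of Theorem \ref{zv-c3-qiu} to $\mathcal{A}:=\mathcal{A}_{\Gamma}$, using Remark \ref{rem4.6} to identify the $K$--s.l.m.\ property with condition (C'2). Your extra bookkeeping about $x\in X\setminus\operatorname*{dom}\Gamma$ (where the non-inclusion is trivial by the convention $\emptyset+A=\emptyset$) is a welcome clarification that the paper leaves implicit.
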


\begin{remark}
\label{rem-qiu} {\rm Using Corollary \ref{c-qiu} one can obtain the
following results: }

{\rm (a) \cite[Cor.\ 3.5]{Qiu:14} for $\Gamma:=f$, $K:=D$ and
$H:=\{k_{0}\}\subset K\backslash(-K)$, because condition (i) of
Proposition \ref{lem3} [via Proposition \ref{lem3b}~(iii)] holds in
this case; moreover, the assumption that $f(X)$ is quasi
$D$--bounded is superfluous. }

{\rm (b) \cite[Cor.\ 3.6]{Qiu:14} (for $\lambda$ mentioned in the
definition of $S(x_{0})$) for $\Gamma:=f$, $K:=D$ and
$H:=\{k_{0}\}\subset K\backslash(-K)$ (instead of $k_{0}\in
K\backslash(-\operatorname*{vcl}K)$), because condition (i) of
Proposition \ref{lem3} [via Proposition \ref{lem3b}~(iii)] holds in
this case; moreover, the assumption that $f(x_{0})\not \subset
f(X)+\varepsilon k_{0}+D$ can be replaced by $f(x_{0})\not \subset
f(x)+\varepsilon k_{0}+D$ for every $x\in X$. }

{\rm (c) \cite[Th.\ 4.2]{Qiu:16b} for $\Gamma(x):=\{f(x)\}$, $K:=D$
and $H\subset K\backslash(-K)$ (instead of
$0\notin\operatorname*{vcl}(H+K)$), because condition (i) of
Proposition \ref{lem3} [via Proposition \ref{lem3b}~(iv)] holds in
this case. }

{\rm (d) As shown in its proof, \cite[Th.\ 4.3]{Qiu:16b} follows
from \cite[Th.\ 4.2]{Qiu:16b} because $H\subset D\backslash(-D)$ and
$H$ being $\sigma(Y,D^{+})$--countably compact imply
$0\notin\operatorname*{vcl}(H+D)$. In fact, the same proof shows
that in the previous implication one may replace
$\operatorname*{vcl}(H+D)$ by $\operatorname*{cl}_{seq}(H+D)$.}
\end{remark}

Similar to Corollary \ref{c-qiu}, the next result is a reformulation of
Theorem \ref{zv-t2-bhH}; for getting its conclusion apply Theorem
\ref{zv-t2-bhH} for $\mathcal{A}:=\mathcal{A}_{\Gamma}$, observing that (C'1) holds.

\begin{corollary}
\label{zv-t4-Ha}Let $\Gamma:X\rightrightarrows Y$ be $K$--s.l.m., and let
$\mathcal{A}_{\Gamma}$ satisfy \emph{(Ca1)} wrt $\preceq_{H}$. Assume that
$\mathcal{A}_{\Gamma}$ verifies one of the conditions \emph{(i)--(iv)} of
Proposition \ref{lem3}. Furthermore, suppose that one of the following
conditions holds:

\emph{(i)} either \emph{(a)} $0\notin\operatorname*{cl}\nolimits_{seq}(H+K)$
and $\Gamma(X)$ is quasi $K$--bounded, or $0\notin\operatorname*{cl}(H+K)$ and
$\Gamma(X)$ is $K$--bounded;

\emph{(ii)} there exists $z^{\ast}\in K^{+}$ such that $\inf z^{\ast}
(\Gamma(X))>-\infty$ and $\inf z^{\ast}(H)>0$, \emph{(iii)} $H$ satisfies
condition (\ref{zv-f5b}) and $\Gamma(X)$ is $K^{+}$--bounded,

\emph{(iv)} $H$ is seq-compact, and there exist a bounded set $B\subset Y$ and
a convex cone $C$ such that $K\subset C$, $H\subset\operatorname*{int}C$, and
$B\not \subset \Gamma(x)+C$ for any $x\in X$, Then for every $x\in
\operatorname*{dom}\Gamma$ there exists $\overline{x}\in S(x)$ such that
$S(\overline{x})=\{\overline{x}\}$, where $S(u):=\{u^{\prime}\in X\mid
\Gamma(u)\subseteq\Gamma(u^{\prime})+d(u,u^{\prime})H+K\}$ for $u\in X$.
\end{corollary}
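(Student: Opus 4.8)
The plan is to obtain this corollary as a direct specialisation of Theorem \ref{zv-t2-bhH} applied to the set $\mathcal{A}:=\mathcal{A}_{\Gamma}=\{(x,\Gamma(x))\mid x\in\operatorname{dom}\Gamma\}$. Since $Y_{\mathcal{A}_{\Gamma}}=\Gamma(X)$ and $\Pr_{2^{Y}}(\mathcal{A}_{\Gamma})=\{\Gamma(x)\mid x\in\operatorname{dom}\Gamma\}$, the hypotheses (i)--(iv) of the present statement are precisely hypotheses (i)--(iv) of Theorem \ref{zv-t2-bhH} with $\mathcal{A}=\mathcal{A}_{\Gamma}$, and the blanket assumption (H) holds automatically because $\Gamma(x)\neq\emptyset$ for every $x\in\operatorname{dom}\Gamma$. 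So the only thing to check before invoking Theorem \ref{zv-t2-bhH} is that $(\mathcal{A}_{\Gamma},\preceq_{H})$ satisfies condition (C1).

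To secure (C1) I would argue in two steps. First, since $\Gamma$ is $K$-s.l.m., Remark \ref{rem4.6} says that $\mathcal{A}_{\Gamma}$ verifies (C'2); combining this with the standing assumption that $\mathcal{A}_{\Gamma}$ satisfies one of the conditions (i)--(iv) of Proposition \ref{lem3}, that proposition yields that $\mathcal{A}_{\Gamma}$ verifies (C'1). Second, since by hypothesis $\mathcal{A}_{\Gamma}$ also verifies (Ca1), the implication $(\mathrm{C'1})\wedge(\mathrm{Ca1})\Rightarrow(\mathrm{C1})$ from \eqref{r-c0011} delivers (C1).

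With (C1) in hand, Theorem \ref{zv-t2-bhH} applies and produces, for each $x\in\operatorname{dom}\Gamma$, some $(\overline{x},\Gamma(\overline{x}))\in\mathcal{A}_{\Gamma}$ with $(\overline{x},\Gamma(\overline{x}))\preceq_{H}(x,\Gamma(x))$ such that $\mathcal{A}_{\Gamma}\ni(x',\Gamma(x'))\preceq_{H}(\overline{x},\Gamma(\overline{x}))$ forces $x'=\overline{x}$. It then remains to unwind $\preceq_{H}=\preceq_{F_{H}}$ via $F_{H}(x,x')=d(x,x')H$: by symmetry of $d$, $(\overline{x},\Gamma(\overline{x}))\preceq_{H}(x,\Gamma(x))$ reads $\Gamma(x)\subseteq\Gamma(\overline{x})+d(x,\overline{x})H+K$, i.e. $\overline{x}\in S(x)$; and $(x',\Gamma(x'))\preceq_{H}(\overline{x},\Gamma(\overline{x}))$ reads $\Gamma(\overline{x})\subseteq\Gamma(x')+d(x',\overline{x})H+K$, i.e. $x'\in S(\overline{x})$. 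Hence $S(\overline{x})\subseteq\{\overline{x}\}$, and since (F1) gives $0\in F_{H}(\overline{x},\overline{x})$ one has $\overline{x}\in S(\overline{x})$; thus $S(\overline{x})=\{\overline{x}\}$, which is the claim.

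I do not expect a genuine obstacle: all the substance is carried by Theorem \ref{zv-t2-bhH}, Proposition \ref{lem3} and Remark \ref{rem4.6}, and what is left is bookkeeping — matching the hypotheses (noting in particular $Y_{\mathcal{A}_{\Gamma}}=\Gamma(X)$) and translating $\preceq_{H}$-statements about $\mathcal{A}_{\Gamma}$ into inclusions for $\Gamma$ and into membership in $S(\cdot)$. The one point requiring mild care is to assemble the chain $(\mathrm{C'2})\Rightarrow(\mathrm{C'1})\Rightarrow(\mathrm{C1})$ in the right order, feeding one of Proposition \ref{lem3}(i)--(iv) into the first implication and (Ca1) into the second.
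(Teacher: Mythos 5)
Your proposal is correct and follows exactly the route the paper intends: apply Theorem \ref{zv-t2-bhH} to $\mathcal{A}:=\mathcal{A}_{\Gamma}$, obtaining (C'1) from $K$-s.l.m.\ (i.e.\ (C'2)) together with one of the conditions of Proposition \ref{lem3}, and then (C1) from (C'1) and (Ca1) via (\ref{r-c0011}). The final translation of the $\preceq_{H}$-statements into the sets $S(\cdot)$ is the same bookkeeping the paper leaves implicit, so nothing is missing.
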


\begin{remark}
\label{rem4} {\rm Replacing $d$ by $\gamma d$, Corollary
\ref{zv-t4-Ha} (ii) covers \cite[Ths.~4.2 and 4.2']{Qiu:14} because
$f$ is $D$-s.l.m.\ (hence (C'2) is verified), and conditions (ii)
and (i) of Proposition \ref{lem3} are satisfied, respectively; in
\cite[Ths.~4.2']{Qiu:14} one has (B$_{2}^{\prime} $) $\Rightarrow$
(B$_{1}^{\prime}$), and the boundedness of $H$ is not needed.
Corollary \ref{zv-t4-Ha} (i) covers also \cite[Th.\ 4.3]{Qiu:14}
because $f$ is $D$-s.l.m.\ and condition (iii) of Proposition
\ref{lem3} is satisfied.}
\end{remark}

It was observed in \cite[p.~921]{Qiu:14} that $\Gamma$ is $K$-s.l.m.\ and has
$K$--closed values when $\operatorname*{epi}\Gamma:=\{(x,y)\in X\times Y\mid
y\in f(x)+K\}$ is closed. Using Corollary \ref{zv-t4-Ha}~(i), this shows that
the conclusion of \cite[Th.~10.4.9]{KhaTamZal:15b} remains true. Indeed, (i)
of Corollary \ref{zv-t4-Ha} is verified; moreover, when condition (i), (ii),
or (iii) of \cite[Th.~10.4.9]{KhaTamZal:15b} holds, then condition (ii),
(iii), or (iv) of Proposition \ref{lem3} is verified, respectively. Of course,
even this variant of \cite[Th.~10.4.9]{KhaTamZal:15b} can be strengthened
replacing the quasi boundedness of $\Gamma(X)$ by the quasi boundedness of
each $\Gamma(x)$ with $x\in X$; moreover, condition (ii) of \cite[Th.~10.4.9]%
{KhaTamZal:15b} could be replaced by conditions (iii)~(a)--(c) from
Proposition \ref{lem3}.


\bigskip
{\bf Acknowledgement.} The work of the first author was part of the research project "Set Optimization via Abstract Convexity" funded by the Free University of Bozen-Bolzano, the work of the second author was funded by CNCS-UEFISCDI (Romania) under grant number PN-III-P4-ID-PCE-2016-0188.

\end{document}